\documentclass[11pt]{amsart}
\usepackage{amsfonts,amsmath,mathrsfs,amssymb,amsthm,amscd,latexsym}
\usepackage[all]{xy}
\usepackage{enumerate}
\xyoption{all}
\usepackage{color}
\usepackage[active]{srcltx} 
\setlength{\parindent}{.4 in} \setlength{\textwidth}{6.3 in}
\setlength{\topmargin} {-.3 in} \setlength{\evensidemargin}{0 in}
\setlength{\oddsidemargin}{0 in} \setlength{\footskip}{.3 in}
\setlength{\headheight}{.3 in} \setlength{\textheight}{8.9 in}
\newtheorem{thm}{Theorem}[section]
\newtheorem{lemma}[thm]{Lemma}
\newtheorem{lem}[thm]{Lemma}

\newtheorem{cor}[thm]{Corollary}
\newtheorem{prop}[thm]{Proposition}

\theoremstyle{definition}
\newtheorem{rem}[thm]{Remark}

\newtheorem{defn}[thm]{Definition}
\newtheorem{definition}[thm]{Definition}

\newcommand{\nc}{\newcommand}
\nc{\cD}{{\mathcal D}}
\nc{\cC}{{\mathcal C}}
\nc{\cE}{{\mathcal E}}
\nc{\cF}{{\mathcal F}}
\nc{\cG}{{\mathcal G}}
\nc{\cH}{{\mathcal H}}
\nc{\cI}{{\mathcal I}}
\nc{\cL}{{\mathcal L}}
\nc{\cP}{{\mathcal P}}
\nc{\cO}{{\mathcal O}}
\nc{\cV}{{\mathcal V}}
\nc{\cJ}{{\mathcal J}}
\nc{\cA}{{\mathcal A}}
\nc{\cZ}{{\mathcal Z}}
\nc{\cT}{{\mathcal T}}
\nc{\cX}{{\mathcal X}}
\nc{\cU}{{\mathcal U}}
\nc{\cR}{{\mathcal R}}
\nc{\bH}{{\mathbb H}}
\nc{\bA}{{\mathbb A}}
\nc{\bG}{{\mathbb G}}
\nc{\bC}{{\mathbb C}}
\nc{\bO}{{\mathbb O}}
\nc{\bI}{{\mathbb I}}
\nc{\bB}{{\mathbb B}}
\nc{\bY}{{\mathbb Y}}
\nc{\bK}{{\mathbb K}} 
\nc{\bX}{{\mathbb X}}
\nc{\bS}{{\mathbb S}}
\nc{\bE}{{\mathbb E}}
\nc{\bF}{{\mathbb F}}
\nc{\bZ}{{\mathbb Z}}
\nc{\bQ}{{\mathbb Q}}
\nc{\bN}{{\mathbb N}}
\nc{\bP}{{\mathbb P}}
\nc{\bL}{{\mathbb L}}
\nc{\bM}{{\mathbb M}}
\nc{\bT}{{\mathbb T}}
\nc{\bW}{{\mathbb W}}
\nc{\bU}{{\mathbb U}}
\nc{\bD}{{\mathbb D}}
\nc{\bJ}{{\mathbb J}}
\nc{\bV}{{\mathbb V}}
\nc{\bR}{{\mathbb R}}
\nc{\tC}{{\widetilde C}}
\nc{\lra}{{\longrightarrow}}
\nc{\la}{{\longrightarrow}}
\nc{\fr}{{\rightarrow}}
\nc{\co}{{\nabla}}
\nc{\cu}{{\overline{\nabla}}}

\pagestyle{myheadings}

\title{The Fano normal function}
\author{Alberto Collino}
\address{
Dipartimento di Matematica \\
Universita di Torino \\
Via Carlo Alberto 10, 10123 \\
Torino, Italy}
\email{alberto.collino@unito.it}

\author{Juan Carlos Naranjo}
\address{
Departament d'\`Algebra i Geometria \\
Facultat de Matem\`atiques \\
Universitat de Barcelona \\
Gran Via 585 \\
08007 Barcelona, Spain }
\email{jcnaranjo@ub.edu}

\author{Gian Pietro Pirola}
\address{
Dipartimento di Matematica ``F. Casorati''\\
Universit\`a di Pavia\\
Via Ferrata 1\\
27100 Pavia, Italy}
\email{gianpietro.pirola@unipv.it}

\thanks{J.C. Naranjo has been partially supported by the Proyecto de Investigaci\'on MTM2009-14163-C02-01.G.P. Pirola
has been partially supported by Gnsasa and by MIUR PRIN 2009: \textup{Moduli, strutture geometriche e loro applicazioni};}

\begin{document}

\begin{abstract}
The  Fano surface  $F$ of  lines  in  the cubic threefold $V$ is naturally embedded in 
the intermediate Jacobian  $J(V)$, we call  ``Fano cycle'' the difference $F-F^-$, this is homologous to $0$
   in $J(V)$.   We study  the normal function on the  moduli space which 
computes the Abel-Jacobi image  of the Fano cycle.  By means of  
the related infinitesimal invariant  we can prove that 
the primitive part of the normal function is not of torsion. As a consequence
we get that, for a general $V$,
$F-F^-$ is not algebraically equivalent to zero in $J(V)$ (proved also by van der Geer and Kouvidakis \cite{vdGK} with different methods) and, moreover, that there is no divisor 
in $JV$ containing both $F$ and $F^-$ and such that these surfaces are homologically equivalent in the divisor.

Our study of the infinitesimal variation of Hodge structure for $V$ 
produces intrinsically a threefold $\Xi (V)$ in the Grasmannian of lines $\mathbb G$ in $\mathbb P^4.$  
We show that the infinitesimal invariant at $V$ attached to the normal function  gives a section of a natural 
 bundle   on $\Xi(V)$ and more specifically that this section 
vanishes exactly on  $\Xi\cap F,$ which turns out to be the curve in $F$ parameterizing  the ``double lines'' in the threefold.
We prove that this curve  
reconstructs  $V$ and hence we get a Torelli-like result: 
the infinitesimal invariant for the Fano cycle determines $V$.
\vskip 3mm
\noindent {\it R\'esum\'e.}
La surface de Fano $F$ des droites d'un threefold cubique $V$ est naturellement plong\'ee dans la vari\'et\'e Jacobienne interm\'ediaire $J(V)$, on va appeller ``cycle de Fano'' la diff\'erence $F-F^-$, ce cycle est homologue \`a $0$ dans $J(V).$ On \'etudie l'application normale, d\'efinie sur l'\'espace des modules, qui compute l'image d'Abel-Jacobi du cycle de Fano. Au moyen du correspondant invariant infinit\'esimal on prouve que la partie primitive de l'application normale n'est pas de torsion. Par cons\'equent on a que, si $V$ est g\'en\'erale, $F-F^-$ n'est pas alg\'ebriquement \'equivalent \`a $0$ dans $J(V)$ (ce qui a \'et\'e \'egalement prouv\'e par van der Geer et Kouvidakis  \cite{vdGK} avec une m\'ethode diff\'erente); de plus, dans $J(V)$ il n'y a aucun diviseur qui contienne $F$ ainsi que $F^-$ et tel que que les deux surfaces soient homologiquement \'equivalentes dans le diviseur m\^eme.

Notre \'etude de la variation infinit\'esimale de la structure de Hodge pour $V$ produit, de fa\c con intrins\`eque, un threefold $\Xi (V)$ dans la vari\'et\'e de Grassmann $G$ des droites dans $\mathbb P^4$. On fait voir que l'invariant infinit\'esimal en $V$ attach\'e \`a l'application normale donne une section du fibr\'e naturel sur $\Xi (V)$; plus pr\'ecis\'ement, cette section s'annule exactement sur $\Xi (V)\cap F$, ce qui on d\'ecouvre \^etre la courbe de $F$ qui param\`etre les ``doubles droites'' dans le threefold. On prouve que cette courbe reconstruit $V$ et on obtient, en conclusion, un th\'eor\`eme de type Torelli: l'invariant infinit\'esimal du cycle de Fano d\'etermine $V$. 

\vskip 3mm
\noindent {\it Key words: Algebraic cycle, Fano surface, Intermediate Jacobian, normal function.} 

\end{abstract}

\maketitle
\pagestyle{plain}


\section{Introduction }

We  present some considerations on the classical theme of cubic threefolds \cite{CG}  and specifically  on geometric properties encoded in 
the variations of their Hodge structure, \cite{Carlson etc.} and \cite{Voisin3}.  

The  Fano surface  $F$ of  lines  in  the cubic 
threefold $V$ is naturally embedded in the intermediate Jacobian  $JV$.  We define the
 ``Fano cycle''  to  be  the difference $F-F^-$, which   is homologous to $0$   in $JV$. 
Our  main interest  is the study of the normal function $\nu_F$ on the  moduli space of cubic threefolds  associated  with
the Abel-Jacobi class  of the Fano cycle, we call it the Fano normal function.  By means of a  somewhat detailed  analysis
 of  the related infinitesimal invariant  $\delta_{\nu}(F)$ we can prove that the primitive part of  $\nu_F$ is not of torsion.
 Our result  is stronger,  it  yields  further  the fact  that for  $V$ generic there is no map $\mu: W \to JV $ from a smooth variety $W$   of   dimension $4$, for which there is a  cycle $Z$ homologous to $0$ in $W$    
with  $\mu _{\ast} Z$ being  Abel-Jacobi equivalent to the Fano cycle.  We have the consequence that in this case    $F-F^-$ is not algebraically equivalent to zero on $JV$.

More precisely, given a family of Fano surfaces $\mathcal F\to B$ parameterized by an open set $B$ of the moduli space of cubic threefolds with a section $s$, we can define the cycle $\mathcal F-\mathcal F^-$ in the corresponding family of intermediate Jacobians of the cubic threefolds $\mathcal J\to B$. This provides a normal function $\nu_F$ which, via the decomposition in primitive cohomology, can be written as  $\nu_F=\nu^1+\nu^3+\nu^5$ (see \S 5 for the details). The pieces $\nu^3$, $\nu^5$ are independent of the choice of the section $s$. We prove (see Proposition \ref{prop:vanishing_mu_3}, Theorem \ref{collez}, and Theorem \ref{thm:alg_equiv}):
\begin{thm} The following holds:
\begin{enumerate}
 \item [a)] The primitive normal function $\nu^5$ is not of  torsion, moreover $\nu^3=0$.
 \item [b)] For a general cubic threefold, $F$ is not algebraically equivalent to $F^-$. 
 \item [c)] For a general cubic threefold $V$ there is no divisor  $W$ on  $JV$ which  contains  $F$ and $F^-$ and is such that $F-F^-\sim_{hom}0$ in $W$.
\end{enumerate}
\end{thm}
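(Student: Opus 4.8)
The plan is to study the normal function $\nu_F$ through its infinitesimal invariant $\delta_\nu(F)$, whose computation reduces to the infinitesimal variation of Hodge structure (IVHS) of the cubic threefold. Recall that the primitive intermediate Jacobian cohomology $H^3_{prim}(V)$ decomposes under the action of the monodromy/Hodge structure, and accordingly the normal function splits as $\nu_F = \nu^1 + \nu^3 + \nu^5$; the key point is that $\nu^5$ lives in the ``most primitive'' piece and its infinitesimal invariant is governed by symmetric cup-product maps on $H^1(V,T_V)$, equivalently on the Jacobian ring $R = \bC[x_0,\dots,x_4]/J_f$ of the defining cubic $f$. Concretely, I would first set up the deformation-theoretic description: a tangent vector to moduli is an element of $R_3$, the Abel--Jacobi class of $F - F^-$ is an element of $J(V)$, and differentiating it along $R_3$ produces a map whose nonvanishing is exactly the statement that $\nu^5$ is not torsion.

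The main steps, in order, are as follows. First, express $\delta_\nu(F)$ explicitly in terms of the IVHS: using the identification of $H^{p,q}_{prim}$ pieces of $H^3(V)$ with graded pieces $R_d$ of the Jacobian ring (Griffiths residue theory), translate the condition ``$\nu^5$ is torsion'' into a statement about the vanishing of a certain bilinear or trilinear form on the $R_d$'s. Second, carry out the geometric construction announced in the abstract: produce the threefold $\Xi(V) \subset \bG$ in the Grassmannian of lines in $\bP^4$ intrinsically from the IVHS, show that $\delta_\nu(F)$ defines a section of a natural bundle on $\Xi(V)$, and identify its zero locus with $\Xi(V) \cap F$, the curve of ``double lines.'' Third — and this is the crux — show this section is \emph{not identically zero}: since its zero locus is a proper (curve-dimensional) subvariety of the threefold $\Xi(V)$, the section is nonzero, hence $\delta_\nu(F) \neq 0$ at a general $V$, hence $\nu^5$ is not of torsion (a torsion normal function has vanishing infinitesimal invariant). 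For part a), the vanishing $\nu^3 = 0$ is proved separately in Proposition~\ref{prop:vanishing_mu_3}, presumably by a direct symmetry/weight argument showing the relevant component of the IVHS pairing is zero, or by an automorphism argument using the action of $(-1)$ on $JV$ exchanging $F$ and $F^-$.

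For parts b) and c), I would argue by contradiction using standard properties of normal functions and Abel--Jacobi maps. For b): if $F \sim_{alg} F^-$ on $JV$ for a general $V$, then the Abel--Jacobi class of $F - F^-$ would lie in the image of the Abel--Jacobi map restricted to algebraically trivial cycles, which (by a spreading-out / countability argument, as in the classical theory of Ceresa and others) would force the normal function $\nu_F$ to be torsion on a dense subset, contradicting part a). For c): if there were a divisor $W \subset JV$ containing both $F$ and $F^-$ with $F - F^- \sim_{hom} 0$ in $W$, then $F - F^-$ would be, up to torsion, in the image of $H_2(W) \to H_2(JV)$ with trivial class, and pushing forward from a resolution $\widetilde W \to W$ realizes $F - F^-$ as $\mu_*Z$ for $Z$ homologous to zero on a smooth fourfold $\widetilde W$; the infinitesimal invariant of such a pushed-forward normal function factors through the (smaller) variation of Hodge structure of $W$, and one shows this forces $\delta_\nu(F)$ to be more degenerate than it is — again contradicting part a).

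The hard part will be the third step of the middle paragraph: proving that the section of the natural bundle on $\Xi(V)$ is not identically zero, i.e., that its zero locus really is only the double-line curve and not all of $\Xi(V)$. This requires an honest computation with the Jacobian ring of a specific (or sufficiently general) cubic, or a clever geometric argument identifying the zero locus scheme-theoretically; controlling that the bundle and section are set up so that the zero locus is exactly $\Xi(V) \cap F$ — neither too big nor with unexpected components — is where the detailed analysis of the IVHS of the cubic threefold is indispensable. A secondary obstacle is making the reductions in b) and c) rigorous over the moduli space: one must be careful with the ``general $V$'' quantifier, using that the locus where a normal function is torsion is a countable union of proper subvarieties, so nonvanishing of the infinitesimal invariant at one point (equivalently on a Zariski-open set) suffices.
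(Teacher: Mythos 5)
Your overall plan coincides with the paper's strategy (Jacobian-ring IVHS, the threefold $\Sigma=\Xi(V)$ of special lines, the adjoint-class section vanishing exactly on the double-line curve, and a spreading-out argument for the algebraic-equivalence statements), but there are two genuine gaps. The first is in your crux step for a): from ``the section on $\Xi(V)$ is nonzero off $\Xi(V)\cap F$'' you infer ``$\delta_\nu(F)\neq 0$, hence $\nu^5$ is not of torsion.'' Nonvanishing of the adjoint class at a special line $r$ only shows, via the Pirola--Zucconi formula (Theorem~\ref{formula}), that the infinitesimal invariant of the full, section-dependent normal function $\nu_s=\nu^1_s+\nu^5$ is nonzero when tested against \emph{some} class $\Phi_\xi\wedge\beta$, which need not be primitive; this could a priori be accounted for by $\nu^1_s$ and says nothing about $\nu^5$. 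To detect $\nu^5$ one must evaluate on the primitive decomposable class $\Omega_\xi=\Phi_\xi\wedge\overline{\omega}_4\wedge\overline{\omega}_5$ (Lemma~\ref{lem:wedge}), and the nonvanishing of that particular pairing is \emph{not} a formal consequence of the nonvanishing of the adjoint class: it requires the explicit computation for a special line meeting $V$ in three distinct points, identifying the adjoint form with the Schubert form $\omega_4\wedge\omega_5$ of the base plane of the dual pencil and checking the coefficient $a_{4,5}\neq 0$ (Proposition~\ref{what} and Proposition~\ref{corR}~c)). Indeed, for special lines tangent to $V$ the adjoint class is nonzero while the primitive evaluation can vanish (cf.\ the remark following Proposition~\ref{corR}), so your inference as stated fails.

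The second gap is in b): algebraic equivalence of $F$ and $F^-$ does \emph{not} force $\nu_F$ (nor $\nu^5$) to be torsion on a dense set --- the Abel--Jacobi image of algebraically trivial cycles is an abelian subvariety of the intermediate Jacobian, which is generally far from torsion, and $\nu^1$ is itself of ``algebraic'' origin. The correct route, which is essentially what you sketch for c), is to spread out the hypothetical $W$ (algebraic equivalence gives such a $W$ of dimension $\leq 3$), obtain a Gauss--Manin-flat sub-local system $\mathcal L\subset\mathcal H^5$ orthogonal to $H^{5,0}$ in which a lift of the normal function takes values, and deduce $\delta\nu(\xi\otimes\Omega_\xi)=0$ since $\Omega_\xi$ is obtained from a generator of $H^{5,0}$ by applying the connection twice --- contradicting the nonvanishing above (Theorem~\ref{thm:alg_equiv}); alternatively one may invoke simplicity of the primitive Hodge structure for very general $V$ by monodromy, but some such input is indispensable. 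A minor further point: your suggested $(-1)$-symmetry proof of $\nu^3=0$ cannot work, since $(-1)^*$ acts by $-1$ on all of $J^5(JV)$ and so yields no constraint; the paper's argument (Proposition~\ref{prop:vanishing_mu_3}) instead reduces to the curves $C_t$ via $\Theta\cdot F-2C_t$, translation invariance of $C_t$ up to a point, and a Pontryagin-product computation.
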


Part b) is not new. By studing the behaviour of the Fourier-Mukai transform in a rank one degeneration of Abelian varieties, van der Geer and Kouvidakis (see \cite{vdGK}, section 8) proved this statement. 
 
Following  Griffiths'  lead, see (6.32)  of  \cite{GriffIII},  we show how to reconstruct $V$ from $\delta_{\nu}(F)$. We came to our proof motivated by the ideas  of  \cite{CP}; there it was  proved,   for curves of genus $3$,  that   the infinitesimal invariant $\delta_{\nu}(C)$ gives back  the curve $C$ (recall that  the normal function $\nu_C $  computes  the  primitive  Abel-Jacobi class  of the cycle  $C-C^-$ in $J(C)$). In the spirit of \cite{CG} we are able to relate the infinitesimal behaviour of the Fano normal function with the projective geometry of the cubic threefold in $\mathbb P^4$.
To be concrete, our study of the infinitesimal variations of Hodge structure for $V$ produces intrinsically a 
threefold $\Sigma (V)$ in  the Grassmannian of lines in $\mathbb P^4.$  
These lines  appear already in   \cite{CG} under the name of   
{\it { lines  of second type}}.
We show that $\delta_{\nu} (V)$    
yields  a section of a natural   bundle   on $\Sigma (V)$ and more specifically that this section 
vanishes exactly on  $\Sigma (V)\cap F,$ which is the  curve parameterizing the double lines
in $V$. We notice that the curve of double lines 
reconstructs  $V$ and rephrase this last  part of  our work  as a Torelli-like result (see Theorem \ref{torelli}): 
\begin{thm}
The infinitesimal invariant for the Fano cycle determines $V$. 
\end{thm}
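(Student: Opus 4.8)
The plan is to run, starting only from the abstract invariant $\delta_{\nu}(V)$, the chain of constructions already set up in the paper — recover the ambient Grassmannian, recover $\Sigma(V)$, recover the section and its zero locus, which is the curve of double lines, and finally recover $V$ from that curve. The first ingredient is that the infinitesimal variation of Hodge structure of a smooth cubic threefold reconstructs the Jacobian ring $R=R(V)$ in the relevant degrees — $H^{2,1}_{\mathrm{prim}}(V)\cong R_1$, $H^{1,2}_{\mathrm{prim}}(V)\cong R_4$, together with the multiplication maps — so that the five–dimensional space $R_1$ of linear forms, hence $\mathbb{P}^4=\mathbb{P}(R_1^{\vee})$ and the Grassmannian $\mathbb{G}=G(2,R_1)=G(1,\mathbb{P}^4)$ of lines in $\mathbb{P}^4$, are canonically attached to the Hodge data over which $\delta_{\nu}(V)$ lives. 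By the results recalled in the Introduction, $\Sigma(V)\subset\mathbb{G}$, the natural bundle $E$ on it, and the section of $E$ cut out by $\delta_{\nu}(V)$ are all produced out of this data, and the zero scheme of that section is $\Sigma(V)\cap F$, the curve $D$ parameterizing the double lines of $V$. Thus $\delta_{\nu}(V)$ canonically produces a curve $D\subset G(1,\mathbb{P}^4)$ whose points are lines lying on $V$.

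To conclude I would reconstruct $V$ from $D$ by sweeping out lines. Put $S_D=\bigcup_{\ell\in D}\ell\subset\mathbb{P}^4$; since every $\ell\in D$ lies on $V$ we have $S_D\subset V$, and $S_D$ is a genuine surface because for general $V$ the lines of second type neither all coincide nor all pass through a common point. The ruled surface swept out by a curve in $G(2,5)$ has, whenever the parameterization is generically injective, degree equal to the Plücker degree of the curve (this follows from $\pi^{*}\mathcal{O}_{\mathbb{P}^4}(1)=\mathcal{O}_{\mathbb{P}(\mathcal{S})}(1)$ for the universal line $\mathbb{P}(\mathcal{S})\to\mathbb{G}$ and a Segre–class computation, $\rho_{*}\bigl(c_1(\mathcal{O}_{\mathbb{P}(\mathcal{S})}(1))^2\bigr)=\sigma_1$); hence $\deg S_D=\deg_{\mathrm{Pl}}(D)$. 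Since $D$ is the curve of lines of second type on $V$, its Plücker degree is computed from $\deg_{\mathrm{Pl}}F=45$, from the cohomology class of $D$ inside $F$, and from Schubert calculus on $\mathbb{G}$, and one checks that it exceeds $9$. Now the intersection of two distinct cubic threefolds in $\mathbb{P}^4$ is a two–cycle of degree $9$, so a surface of degree $>9$ contained in $V$ cannot be contained in any other cubic threefold; equivalently $h^0(\mathbb{P}^4,\mathcal{I}_{S_D}(3))=1$, so $V$ is the unique cubic threefold through $S_D$. Composing the two steps, $\delta_{\nu}(V)$ determines $V$ for general $V$, which is the assertion of the theorem.

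The main obstacle is the numerical bookkeeping in the last step. One must verify that over $D$ the lines of second type are generically pairwise distinct — so that the parameterization of $S_D$ is generically injective and $S_D$ is honestly two–dimensional, not collapsing onto a smaller locus — and then compute $\deg_{\mathrm{Pl}}(D)$, equivalently the class in the Fano surface of the curve of second–type lines, accurately enough to be sure it is larger than $9$. Both inputs are available: the general–position statement is of the type treated in \cite{CG}, and the numerical characters of the curve of second–type lines are classical. Once $\deg S_D>9$ is in hand the uniqueness of the cubic through $S_D$ is immediate and no delicate vanishing theorem is needed; since every construction in the first step is manifestly canonical in the Hodge data, the Torelli statement follows.
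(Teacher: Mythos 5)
Your overall route is the same as the paper's: use the infinitesimal invariant to cut out, as the zero locus of the section on $\Sigma(V)$, the curve $\mathcal D$ of double lines, then recover $V$ as the unique cubic containing the ruled surface $S_{\mathcal D}$ swept out by $\mathcal D$, via a degree bound $\deg S_{\mathcal D}>9$. The problem is that the one step you defer as ``numerical bookkeeping'' --- ``one checks that it exceeds $9$'' --- is precisely the mathematical content of Section 6 of the paper, and your way of organizing it has two real gaps. First, the class $\mathcal D\in|2K_F|$, which gives $\deg\mathcal D=2K_F^2=90$, is a statement about the divisorial (scheme/cycle) structure; to transfer it to the honest curve of double lines you need to know that every component of $\mathcal D$ is \emph{reduced}. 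This is not a formality: the paper proves it by a genuine argument (the flag space $K(V)$ of pairs (double line, tritangent plane), the surjectivity of the differential of $(L_i,A_j)\mapsto L_0A_0+L_1A_1+L_2^2L_3$ at a smooth $V$, proved via Macaulay duality for the ideal $(C_0,C_1,z_2^2)$). Without reducedness a multiplicity-two component could cut the reduced Pl\"ucker degree down to $45$, and then even the correct covering bound gives only $\deg S_{\mathcal D}\ge 45/6>7$, which is \emph{not} enough to beat $9$. Your proposal never addresses this.

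Second, your degree formula requires the sweeping map to be generically injective, a claim you leave unverified (``of the type treated in [CG]'') and which is stronger than needed; the paper sidesteps it by the elementary bound that at most $6$ lines of $V$ pass through a general (non-Eckardt) point, so the incidence correspondence over $\mathcal D$ maps to $S_{\mathcal D}$ with degree at most $6$ and $\deg S_{\mathcal D}\ge 90/6=15>9$. This weaker input is what makes the argument work for \emph{every} smooth $V$, whereas your conclusion is only stated ``for general $V$'': the theorem as formulated (and as proved in the paper) has no genericity hypothesis, so restricting to general $V$ is itself a loss. In short, the skeleton of your argument matches the paper, but the reducedness of $\mathcal D$ and the replacement of generic injectivity by the $\le 6$ covering bound are missing ideas, and without them the degree estimate --- hence the uniqueness of the cubic through $S_{\mathcal D}$ --- is not established.
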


Normal functions have been extensively studied,  of special relevance  to us here  is Nori's work,  \cite[Section 7.5]{Nori}. He 
proved that for $g\geq 4$ there are no interesting normal functions over  $\mathcal A_g  $, the moduli space
of Abelian varieties of genus $g$.
On the other hand N.  Fakhruddin  \cite{Fakhr}, using Nori's connectivity theorem,
showed that nevertheless the Griffiths groups of codimension $3$ and $4$ are of infinite rank on the generic Abelian variety 
of dimension $5$, although they are in the kernel of the Abel-Jacobi mapping.
By   the  Torelli theorem in  \cite{CG},  the moduli space of cubic threefolds embeds in  $\mathcal A_5$. Keeping in mind this framework then our result about the normal function might be interpreted  as the statement
that intermediate jacobians of cubic threefolds lie  inside  a certain Noether-Lefschetz-like sub-locus of $\mathcal A_5$.  

Turning instead to the case of Jacobians of curves,  consider the symmetric product of a curve $C$, and specifically its image,   the surface $W_2 \subset J(C)$.  The normal function associated with the  {\it{  primitive }} Abel-Jacobi class  of $W_2 -W_2^{-}$   is trivial.  This statement, which is also proved in the remark (\ref{hain}) below, follows from the general results   of R. Hain \cite{Hain}, who  proved that over $\mathcal M_g$  all interesting normal function come from the  normal function $\nu_C $  of the   cycle  $C-C^-$ in $J(C)$. Now Allcock-Carlson-Toledo (see \cite{Allcock}) defined a period map for cubic threefolds
which takes values in a ball quotient of dimension 10. A theorem
of C. Voisin \cite{Voisin1} implies that this is an open embedding. We think that our results   provide some  support to the possibility  that   the structure of the group
of normal functions over the moduli space of cubic threefolds should be a cyclic group, as it is the case for the Jacobian  situation. 
We point out that our work shows that   $\nu_F$ is certainly  a truly distinguished element of this group.

The structure of the paper is as follows: in section \S 3 we give some  computations in the Jacobian ring associated with the equation of a cubic threefold.
In particular we study the infinitesimal deformations of rank $2$ and we characterize them in terms of lines in $\mathbb P^4$. 
Next in section \S 4 we use the detailed analysis of the differential forms on $F$ given in  \cite{CG} and \cite{T} to compute the adjoint class form introduced in \cite{CP} and \cite{PiZu}.
This is the basic tool we employ in section \S 5 to compute the Griffiths' infinitesimal invariant of the Fano normal function.
The information we get allows us to prove the results summarized in Theorem 1.1 above.  In section \S 6 we do a careful analysis of the geometry of the curve of double lines needed to prove our Torelli-like Theorem 1.2. 


\vskip 5mm
{\it Acknowledgments.} We would like to thank the referee for many helpful suggestions and careful reading of the manuscript.

\section{Notation}

 \begin{itemize}
  \item We work over  $\mathbb C$, the field of the complex numbers,
  \item $V$ is a smooth cubic threefold in $\mathbb P^4$ of equation $E \in S:=\mathbb C[z_0,\dots,z_4]$,
  \item $JV$ the intermediate Jacobian of $V$ endowed with the natural principal polarization 
$\Theta :=\Theta_{JV}$ 
        (see \cite{CG}, \S 3),
  \item $\bG:=Grass(2,5)$ is the Grassmann variety of lines in $\mathbb P^4\,.$ The standard  Pl\"ucker coordinates $p_{i,j}$  give the embedding  $\bG  \subset \mathbb P^9 \, ,$ 
  \item The hyperplane $p_{i,j} = 0 $ cuts on $G$ the Schubert divisor $D_{\pi(i,j)}$, which is the locus of elements  $r  \in \bG$ such that the corresponding line $l_r\subset \bP^4$ intersects the fixed  plane $\pi(i,j)$ of equation $z_i = 0 = z_j$,
  \item $F\subset \bG$  is the Fano surface of lines contained in $V$,
  \item A line $r\in F$ is called {\it double} if there exists a $2$-plane intersecting $V$ in $2l_r+l_s$. 
Double lines define a curve $\cD$ in $F$. 
 \end{itemize}
 \section{Infinitesimal variations of Hodge structures  for   cubic threefolds}
We are concerned here  with  infinitesimal variations of Hodge structures for  the cubic threefold $V$ and for related objects,
namely the  intermediate Jacobian $JV$ and the Fano surface $F$ of lines on $V$.
Everything turns around the linear map $ \delta :T \to Hom ( H^{2,1}(V) , H^{1,2}(V))$ where $T$ 
is the tangent space at $[V]$ to the moduli space of cubic threefolds. This is understood and controlled by means of  
polynomial computations because of  Griffiths'  results on  the    Hodge filtration for hypersurfaces, \cite{PRI} and \cite{Voisin3}, \S 6.

We briefly recall the  basic facts we need for the  smooth cubic threefold   $V$ in  $\mathbb P^4$.  
Denote by $J\subset \mathbb C[z_0,\dots,z_4]=S=\oplus _{i\ge 0} S^i $ the Jacobian 
ideal generated by the partial derivatives $\frac {\partial E}{\partial z_i}\in S^2$.
The Jacobian ring is the graded ring $R:=S/J=\oplus_iR^i$.

One has $\dim R^0=\dim R^5=1$, $\dim R^1=\dim R^4=5$, $\dim R^2=\dim R^3 =10$  and the others are zero. 
Moreover the product of polynomials induces perfect pairings
\begin{equation*}
 R^i \otimes R^{5-i}\lra R^5=\mathbb C, \quad i=0,\dots,5.
\end{equation*}
Observe that $R^1=S^1$ is the vector space of equations of hyperplanes on $\mathbb P^4$, in other
words $\mathbb P^4=\mathbb P (R^{1\,*})\cong \mathbb P(R^4)$.
We  remark  that  the tangent space $T$ to the moduli   of cubic threefolds is identified as follows:
 \begin{equation*}
T:=  H^1(V,T_V)\cong R^3 .
 \end{equation*}

The  canonical isomorphisms of $R^i$ with the pieces of the Hodge decomposition of $V$ are:

$$H^{a,3-a}(V)\cong R^{7-3a}, \quad \forall a. $$ 
namely:
 $$ H^{3,0}(V)\cong R^{-2}=0 \,,\quad H^{2,1}(V)\cong R^1  \,,\quad H^{1,2}(V)\cong R^4  \,,\quad H^{0,3}(V)\cong R^7=0.$$ 
 
All the cup-product maps correspond via these isomorphisms to products of classes of polynomials.  For example 
the cup product map $H^1(V,\Omega^2_V)\otimes H^1(V,T_V)\stackrel{\cup}{ \lra } H^2(V,\Omega ^1_V)$
can be seen as the multiplication $R^1\otimes R^3 \lra R^4$.
 
The beautiful results of  \cite{CG} show how the Hodge decomposition of $V$ is linked  with that of $F$
via the correspondence given by the universal family of lines $\mathcal{L}\subset F\times V$. In particular, there are isomorphisms  
$H^{2,1}(V)\cong H^{1,0}(F)$ and  $H^{1,2}(V)\cong H^{0,1}(F)$. Even more, there is an isomorphism of 
principally polarized
 Abelian varieties $JV\cong Alb(F)$. Clemens-Griffiths  and Tjurin (cf. \cite{T})  completed  Fano's work. In particular, concerning 
the canonical system of $F$,   they    gave modern proofs of  the isomorphisms 
$H^0(F,\Omega^2_F)\cong \Lambda ^2 H^0(F,\Omega^1_F) \cong \Lambda ^2 R^1$.
Furthermore    the  space of  first order   deformations   of  $V$  is  the same as    that of   $F$, i.e. 
$H^1(V,T_V)\cong H^1(F,T_F)\cong R^3$.

In order to apply the tool of adjoint forms in a later section we need to have at our disposal   elements in $R^1=H^0(F,\Omega ^1_F)$ which are orthogonal to a first order deformation, i.e. to some  $\xi \in R^3 $.  
Associated with $ \delta :T \to Hom ( H^{2,1}(V) , H^{1,2}(V))$, i.e.
$ \delta :R^3 \to Hom (R^1, R^4)$, there are determinantal varieties
\begin{equation}\label{def_Xi}
  \Xi _j :=\{ \xi \in  \mathbb PR^3 \,\vert \,  rank ( \delta(\xi)) \leq j \} \subset  \mathbb P R^3.
\end{equation}
  
Now we have   $  \Xi _0  =    \Xi _1  =\emptyset  ,$ indeed: 
\begin{lem} \label{rank_xi}
If   $ 0 \neq\xi  \in R^3$, then   $rank (\delta (\xi ))  \geq 2.$
\begin{proof}
 Let  $K_j(\xi):=\ker (R^j \stackrel{\cdot \xi}{\lra} R^{j+3})$. We want to see that
the dimension of $K_1(\xi)$ is $\le 3$.
 We have  $\dim K_2(\xi)=9$, since 
$R^2\otimes R^3\longrightarrow R^5$ is a perfect pairing and $\xi \ne 0$. 
Observe that  $ \dim K_1(\xi ) = 5  $ is impossible, as $\xi \neq 0 $. 
Recall next that $S^2$, the vector space of the equations of the quadrics in $\mathbb P^4$,
 has dimension $15$. If   $ \dim K_1(\xi) = 4 $, then     $ K_1(\xi) \cdot S^1\subset S^2 $ 
is the family of the quadrics vanishing on a point, 
the  base locus of $K_1(\xi)$, and therefore   $\dim K_1(\xi) \cdot S^1 = 14$.  
Since $ K_1(\xi) \cdot S^1 $ 
is contained in the hyperplane in $S^2$  pull back of  $K_2(\xi )\subset R^2$, 
then we see that  $K_1(\xi) \cdot S^1 $ must contain $J_2$.  
In this case the base point should be singular for $V$,  because the partial derivatives vanish on it. 
This is  a contradiction since $V$ is  smooth.
\end{proof}
\end{lem}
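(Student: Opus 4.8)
The plan is to convert the rank bound into a dimension estimate for a kernel and then exploit the perfect pairings on the Jacobian ring together with the smoothness of $V$. Since $\delta(\xi)$ is the multiplication map $R^1 \xrightarrow{\,\cdot\,\xi\,} R^4$, put $K_j(\xi) := \ker\!\left(R^j \xrightarrow{\,\cdot\,\xi\,} R^{j+3}\right)$; then $\operatorname{rank}\delta(\xi)\geq 2$ is equivalent to $\dim K_1(\xi)\leq 3$. First I would record the auxiliary fact $\dim K_2(\xi)=9$: the pairing $R^2\otimes R^3\to R^5\cong\mathbb{C}$ is perfect and $\xi\neq 0$, so $\cdot\,\xi\colon R^2\to R^5$ is a nonzero linear functional and $K_2(\xi)$ is a hyperplane in $R^2$.

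Next I would eliminate the two largest possible values of $\dim K_1(\xi)$. The value $5$ is impossible: $\xi\cdot R^1=0$ in $R^4$ would force $\xi\cdot R^2=0$ in $R^5$, since $R^1\cdot R^1=R^2$ (because $R^2=S^2/J_2$ and $S^2$ is spanned by products of linear forms), contradicting $\dim K_2(\xi)=9$. The main obstacle is the value $4$. If $\dim K_1(\xi)=4$, then $K_1(\xi)\subset S^1$ is spanned by four independent linear forms, whose common zero locus is a single point $p\in\mathbb{P}^4$; writing $p=[1:0:0:0:0]$ one checks immediately that the span $K_1(\xi)\cdot S^1\subset S^2$ is exactly the $14$-dimensional space of quadrics vanishing at $p$.

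To conclude I would play this description against the constraint coming from $\xi$. Because $\xi$ annihilates $K_1(\xi)$ in $R^4$, the image of $K_1(\xi)\cdot S^1$ in $R^2$ lies inside $K_2(\xi)$; hence $K_1(\xi)\cdot S^1$ is contained in the preimage of the hyperplane $K_2(\xi)\subset R^2$ under the surjection $S^2\to R^2$, which is a subspace of $S^2$ of dimension $15-1=14$ containing $J_2$. Since both subspaces have dimension $14$ and one contains the other, they coincide, so $J_2\subseteq K_1(\xi)\cdot S^1$; that is, every partial derivative $\partial E/\partial z_i$ vanishes at $p$. By Euler's identity $E(p)=0$ as well, so $p$ would be a singular point of $V$, contradicting smoothness. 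Therefore $\dim K_1(\xi)\leq 3$, i.e. $\operatorname{rank}\delta(\xi)\geq 2$, as claimed.
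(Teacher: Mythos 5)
Your proof is correct and follows essentially the same route as the paper: reduce to $\dim K_1(\xi)\le 3$, use the perfect pairing to get $\dim K_2(\xi)=9$, and in the case $\dim K_1(\xi)=4$ identify $K_1(\xi)\cdot S^1$ with the $14$-dimensional space of quadrics through the base point, force it to contain $J_2$, and contradict smoothness of $V$. The only differences are that you spell out details the paper leaves implicit (why $\dim K_1(\xi)=5$ is impossible, and the use of Euler's identity to conclude the base point lies on $V$).
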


The next natural goal is to determine  $\Xi _2$, i.e. to understand 
 for which $\xi \in R^3$ the dimension of $K_1(\xi)$ is exactly $3$. 
Our answer  is in terms of the associated line, cut by the corresponding system 
of hyperplanes in $\mathbb P^4$.
Consider the grassmannian $\bG':=Grass(3, R^1 )$ 
of $3$-dimensional subspaces of $R^1$ which is isomorphic 
to the Grasmannian of lines $\bG =Grass(2,R^4)$:
\begin{equation*}
 \begin {aligned}
  &\bG \,\lra \,\bG' \\
  &r\mapsto W(r)=\{\text{hyperplanes containing the line }l_r\}. 
 \end {aligned}
\end{equation*}
We define
\begin{equation*}
 \Gamma=\{(r,[\xi])\in \bG \times \mathbb P R^3 \,\vert \, \eta \cdot  \xi=0 \quad \forall \eta \in W(r)\}
=\{(r,[\xi])\in \bG \times \mathbb P R^3 \,\vert \,K_1(\xi)=W(r)\}.
\end{equation*}
We write  $\pi_1, \pi_2$ the projections of $\Gamma $ into the factors, thus 
$\Xi _2 =\pi_2(\Gamma)$  and we set $\Sigma:=\pi_1(\Gamma)\subset \bG$.

Let $r\in \bG$ be an element of the Grassmannian and let $l_r$ be
the corresponding line in $\mathbb P^4$. 
Using Macaulay duality and recalling that $W(r)\cdot S^1 \subset S^2$ 
is the space of quadrics containing $l_r$ (hence of dimension $12$),
one can easily check the following equivalences: 
\begin{equation*}
\begin {aligned}
 & r \in \Sigma \iff W(r)\otimes R^1\to R^2 \text{ is not  surjective }\\
& \text {(because the image is contained in the hyperplane } K_2(\xi)) \\
&\iff W(r)\cdot S^1+J_2 \subsetneq  S^2 \iff  \dim (W(r)\cdot S^1) \cap J_2  \geq 3,
\end {aligned}
\end{equation*}
which is the case if and only if  the restriction of  $ J_2 $ to $l_r$ has dimension at most $2$.
Notice that  $ \dim (W(r)\cdot S^1) \cap J_2  \geq 4 ,$ is impossible.
Indeed in this case the restriction of $J_2$ to $l_r$  would be of dimension $1$,
and therefore the zero locus of the ideal $J=(\partial E/\partial z_i)$ would not be empty,
hence $V$ would be singular. We have then proved the following:
\begin{lem}\label{Caract_Sigma}
Given a line $r\in \bG$ the following are equivalent:
\begin{enumerate}
 \item [a)] $r \in \Sigma $.
 \item [b)] $ W(r)\cdot R^1$ is a hyperplane in $R^2$.
 \item [c)] $\dim (W(r)\cdot S^1) \cap J_2  = 3$.
 \item [d)] $\dim (J_2 |_{l{_r} }  )= 2.$
\end{enumerate}
\end{lem}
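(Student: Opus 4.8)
The plan is to verify the chain of equivalences a) $\iff$ b) $\iff$ c) $\iff$ d) by carefully tracking what each condition says about the interaction of the subspace $W(r)\subset R^1$ with the Jacobian ideal, exactly along the lines already sketched in the discussion preceding the statement. First I would establish a) $\iff$ b): by the definition of $\Gamma$ and $\Sigma$, a point $r$ lies in $\Sigma$ precisely when there exists $0\neq[\xi]\in\mathbb PR^3$ with $K_1(\xi)=W(r)$, i.e.\ with $\eta\cdot\xi=0$ for all $\eta\in W(r)$; equivalently, the image of $W(r)\otimes R^1\to R^2$ is annihilated by $\xi$ under the perfect pairing $R^2\otimes R^3\to R^5$, so it is contained in the hyperplane $K_2(\xi)$. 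Thus $r\in\Sigma$ iff $W(r)\cdot R^1$ fails to be all of $R^2$; combined with Lemma \ref{rank_xi} (which forces $\dim K_1(\xi)\le 3$, hence $\dim K_2(\xi)=9$ is a genuine hyperplane and the image cannot drop further), this upgrades ``not surjective'' to ``exactly a hyperplane'', giving b).

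Next I would translate b) into the polynomial statement c) via Macaulay/Gorenstein duality for the Artinian Gorenstein ring $R=S/J$. Lifting to $S$, the map $W(r)\otimes R^1\to R^2$ corresponds to $W(r)\cdot S^1\subset S^2$ modulo $J_2$, and $\dim S^2=15$, $\dim W(r)\cdot S^1=12$ (the quadrics through $l_r$), $\dim J_2=5$, $\dim R^2=10$. The image $W(r)\cdot R^1$ has dimension $12-\dim\big((W(r)\cdot S^1)\cap J_2\big)$, so it is a hyperplane in $R^2$ iff $\dim\big((W(r)\cdot S^1)\cap J_2\big)=3$, which is c); one also records, as in the preamble, that dimension $\ge 4$ is impossible since it would make the restriction of $J_2$ to $l_r$ one-dimensional, forcing a common zero of the partials on $l_r$ and hence a singular point of $V$.

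Finally, c) $\iff$ d) is the cleanest step: restriction to the line $l_r$ gives an exact sequence $0\to (W(r)\cdot S^1)\cap J_2\to J_2\to J_2|_{l_r}\to 0$, because the kernel of ``restrict a quadric to $l_r$'' is exactly the quadrics through $l_r$, namely $W(r)\cdot S^1$ inside $S^2$. Since $\dim J_2=5$, we get $\dim(J_2|_{l_r})=5-\dim\big((W(r)\cdot S^1)\cap J_2\big)$, so d) holds with value $2$ iff c) holds with value $3$. I expect the only delicate point to be the ``exactly a hyperplane, not less'' half of b) and the parallel ``not dimension $\ge 4$'' in c)/d): both rest on smoothness of $V$ through the observation that a larger drop would localize a zero of the Jacobian ideal on $l_r$, and this is precisely the argument already used in the proof of Lemma \ref{rank_xi}, so it can be invoked rather than redone.
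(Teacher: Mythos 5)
Your proposal is correct and follows essentially the same route as the paper: the equivalences are exactly the chain $r\in\Sigma \iff W(r)\otimes R^1\to R^2$ not surjective $\iff W(r)\cdot S^1+J_2\subsetneq S^2 \iff \dim\bigl((W(r)\cdot S^1)\cap J_2\bigr)\geq 3$, sharpened to equality (equivalently $\dim J_2|_{l_r}=2$) by the smoothness of $V$. One small correction: the upgrade from ``not surjective'' to ``exactly a hyperplane'' is not a consequence of Lemma \ref{rank_xi} (that lemma would still allow the image to have codimension $2$); it is precisely the separate observation, which you do supply afterwards, that $\dim\bigl((W(r)\cdot S^1)\cap J_2\bigr)\geq 4$ would force $\dim J_2|_{l_r}\leq 1$ and hence a common zero of the partials on $l_r$, contradicting smoothness.
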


\begin{rem}\label{dual}
 The lines we have found are classical objects of study, called {\it special} by   Fano and {\it  lines of second type  } by  \cite{CG}.
Since we need to appeal to  this  property   later, we recall here  that a line $l\subset \mathbb P^4$ is of second type exactly 
if the dual  map 
\begin{equation*}
\begin{aligned}
d:\mathbb P^4  \,\,&\lra \quad \mathbb P^{4\,*} \\
 p & \mapsto \left(\frac{\partial E}{\partial z_0}(p): \dots  : \frac{\partial E}{\partial z_4}(p)\right)
\end{aligned}
\end{equation*}
 maps $l$  to a line $l'$ and $l\lra l'$ has degree $2$. 
The base locus of the corresponding dual pencil is then a plane.
\end{rem}
 
Next Lemma deals with the geometry of $\Sigma$:
\begin{lem} One has:
  \begin{itemize}
   \item [a)] The projection $\pi_1:\Gamma \to \Sigma$ is bijective. 
   \item [b)] $\dim \Gamma=\dim \Sigma=3.$
  \end{itemize}
  \end{lem}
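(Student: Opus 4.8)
The plan is to obtain (a) from the Gorenstein duality of the Jacobian ring and then to deduce (b) from it together with a dimension estimate for $\Sigma$. For (a), observe first that $\pi_1$ is surjective by the very definition $\Sigma=\pi_1(\Gamma)$, so only its fibres need attention. Fix $r\in\bG$. Since the multiplication pairings $R^1\otimes R^4\to R^5=\bC$ and $R^2\otimes R^3\to R^5=\bC$ are perfect and the product in $R$ is associative, the condition ``$\eta\cdot\xi=0$ in $R^4$ for every $\eta\in W(r)$'' is equivalent to ``$\xi\cdot w=0$ in $R^5$ for every $w\in W(r)\cdot R^1$'', that is, to $\xi\in(W(r)\cdot R^1)^{\perp}\subset R^3$. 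Hence $\pi_1^{-1}(r)=\bP\bigl((W(r)\cdot R^1)^{\perp}\bigr)$, which by Lemma \ref{Caract_Sigma} is empty when $r\notin\Sigma$ (there $W(r)\cdot R^1=R^2$) and reduces to a single point when $r\in\Sigma$ (there $W(r)\cdot R^1$ is a hyperplane in $R^2$, so its orthogonal is a line). Thus $\pi_1:\Gamma\to\Sigma$ is bijective; the mirror argument, via the isomorphism $\bG\cong\mathrm{Grass}(3,R^1)$, $r\mapsto W(r)$, shows likewise that $\pi_2:\Gamma\to\Xi_2$ is bijective.

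For (b), a bijective morphism of varieties is surjective with finite fibres, so $\dim\Gamma=\dim\Sigma=\dim\Xi_2$, and it is enough to show this common value is $3$. The lower bound follows from Lemma \ref{Caract_Sigma}(d): $\Sigma$ is the locus in $\bG$ where the restriction map from the trivial rank--$5$ bundle $J_2\otimes\cO_{\bG}$ to the rank--$3$ bundle with fibre $H^0(l_r,\cO_{l_r}(2))$ has rank $\le 2$; being a degeneracy locus, every component of $\Sigma$ has codimension at most $(5-2)(3-2)=3$ in $\bG$, hence dimension at least $3$, provided $\Sigma\ne\emptyset$. Non--emptiness is classical, since the curve $\cD\subset F$ of double lines is made of second--type lines (Remark \ref{dual}), so $\emptyset\ne\cD\subseteq\Sigma$ and $\dim\Sigma\ge 3$.

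The hard part is the reverse inequality, i.e. excluding a component of $\Sigma$ --- equivalently of $\Xi_2$ --- of dimension $\ge 4$. Here I would work in $\bP R^3$, using that $\delta(\xi)\colon R^1\to R^4\cong(R^1)^{*}$ is \emph{symmetric} (again by associativity), so that $\Xi_2$ is the set of quadratic forms of rank $\le 2$ lying in the linear subspace $\bP R^3\subset\bP(\mathrm{Sym}^2 R^4)\cong\bP^{14}$, whose annihilator hyperplanes are precisely $\bP(J_2)$. Since $\Xi_1=\emptyset$ by Lemma \ref{rank_xi}, $\Xi_2$ misses the rank--one (Veronese) locus, hence lies in the smooth part $M$ of the rank--$\le 2$ variety, an $8$--fold in $\bP^{14}$. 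If $\Xi_2=\bP R^3\cap M$ had a component of dimension $\ge 4$, then at a general point $[\xi]=[\alpha\odot\beta]$ of it the tangent spaces $T_{[\xi]}M$ and $\bP R^3$ (of projective dimensions $8$ and $9$) would fail to span $\bP^{14}$, hence would lie in a common hyperplane $H_Q$ with $Q\in J_2\setminus\{0\}$; computing $T_{[\xi]}M=\alpha\odot R^4+\beta\odot R^4$ one gets $\alpha,\beta\in\mathrm{rad}(Q)$, so $Q$ is a combination of the partials $\partial E/\partial z_i$ of rank $\le 3$ whose vertex contains the line $l_r$. But $\bP(J_2)$ is the system of polar quadrics of $E$, and the Hessian determinant of a smooth cubic threefold is not identically zero, so the rank--$\le 3$ locus inside $\bP(J_2)$ is a proper subvariety, and the lines $l_r$ arising as its vertices form a family of dimension $\le 3$ --- contradicting the $4$. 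This gives $\dim\Sigma=\dim\Gamma=3$. I expect the genuine work to be concentrated in this last non--degeneracy input, namely bounding the loci where the corank of the Hessian of $V$ jumps; equivalently, one shows that $\Sigma$ is the image under $(p,q)\mapsto\overline{pq}$ of $\{(p,q):\mathrm{Hess}(E)(p)\,q=0,\ p\ne q\}$, which fibres over the Hessian hypersurface of $V$ with finite fibres over its dense corank--one stratum, and this is where the smoothness of $V$ enters.
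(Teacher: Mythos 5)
Part a) of your argument is exactly the paper's (duality identifies the fibre of $\pi_1$ over $r$ with the annihilator of $W(r)\cdot R^1$, which is a point precisely on $\Sigma$), and your lower bound $\dim\Sigma\ge 3$ is sound (the paper gets it even more cheaply: $\bP R^3\cong\bP^9$ must meet the $8$--dimensional rank--$\le 2$ locus in $\bP^{14}$ in dimension $\ge 3$, with non-emptiness automatic). The problem is the upper bound. Your tangent--space reduction is correct up to the point where a $\ge 4$--dimensional component of $\Xi_2$ forces, at a general point, a nonzero $Q\in J_2$ of rank $\le 3$ with $l_r\subseteq\mathrm{vertex}(Q)$. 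But the concluding count ``the lines arising as vertices form a family of dimension $\le 3$'' is only valid if every such $Q$ has rank exactly $3$ (vertex a line). A rank--$2$ element of $J_2$ has a plane as vertex (a $2$--parameter family of lines for each such $Q$), and a rank--$1$ element has a $\bP^3$ as vertex, i.e.\ a $4$--dimensional family of lines all by itself. Smoothness of $V$ does not exclude these: for the cyclic cubics $E=z_0^3+C(z_1,\dots,z_4)$ with $C$ a smooth cubic surface, $V$ is smooth and $\partial E/\partial z_0=3z_0^2\in J_2$ has rank $1$, so the set of lines contained in vertices of rank--$\le 3$ polar quadrics is already $4$--dimensional and your bound fails. (Your earlier observation that $\Xi_1=\emptyset$ concerns ranks on the other side, in $\mathrm{Sym}^2R^4$, and gives no control on low--rank elements of $J_2\subset\mathrm{Sym}^2R^1$.) So as written the argument does not exclude a $4$--dimensional component for all smooth $V$; to repair it you would have to bound the rank--$\le 2$ stratum of $\bP(J_2)$ and separately dispose of the rank--$1$ case (e.g.\ show that a $4$--fold of special deformations whose lines all lie in a fixed $\bP^3$ is impossible), which is additional genuine work, not just the non--vanishing of the Hessian determinant.

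For comparison, the paper's upper bound avoids the Hessian of $E$ entirely and needs only Lemma \ref{rank_xi}: the rank--$\le 2$ locus in $\bP(\mathrm{Sym}^2R^4)$ is the image of the Segre $\bP^{4*}\times\bP^{4*}$, in which every effective $4$--cycle meets the diagonal; hence a component of $\rho(\Xi_2)$ of dimension $\ge 4$ would be forced (being closed) to contain a rank--one point, contradicting $\mathrm{rank}(\rho(\xi))\ge 2$. That argument is uniform in the smooth cubic and replaces your entire non--degeneracy analysis; I would either adopt it or supply the missing stratum bounds described above.
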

  \begin{proof}
To prove a) we appeal to  Lemma (\ref{Caract_Sigma}): $r\in \Sigma$ if and only if  $W(r)\cdot R^1$ is a hyperplane
in $R^2$. Then there is a unique element, up to constant, $\xi \in (R^2)^*\cong R^3$ corresponding to 
this hyperplane and
by construction $W(r)=K_1 (\xi)$. 

 To show b) we consider the map $\rho: R^3\to Sym^2 R^4$ induced by the product $R^1\otimes R^3 \to R^4$
and the isomorphism $R^1\cong (R^4)^{*}$. By definition
\begin{equation*}
[\xi ]\in \Xi_2 \iff  rk(\rho (\xi)) \leq 2 . 
\end{equation*}
In the space of quadrics in $\mathbb P^4$, i.e. in  $\bP(Sym^2 R^4)=\bP^{14}$,  the variety of the 
quadrics of rank $2$   is of codimension $6$, being the space of couples of hyperplanes.  Since  
  $\rho$ is injective then we have   $\dim \Xi_2 \geq 3$. 
Recalling that  in the Segre product  $\mathbb  P^{4 *} \times \mathbb P^{4*}$ every effective cycle 
 of dimension $4$ intersects the diagonal,
we see that  if $\dim \Xi_2\geq 4$, 
then the variety  $\rho ( \Xi_2)\subset \bP (Sym^2 R^4)$  would  intersect the image of the  diagonal which corresponds 
to  the locus of rank one quadrics. 
This contradicts the fact that $rank(\rho(\xi))>1$ (Lemma (\ref{rank_xi})). 
We finally notice that  $\pi_2 : \Gamma \to \Xi $ is an isomorphism,  therefore the result follows.
\end{proof}
   
 The lines from  $\Sigma$ which are contained in $V$ can be geometrically characterized by their tangency property, see  \cite{CG} (6.7) : 
\begin{lem} A line 
 $r\in F\cap \Sigma$ if and only if there is  a plane tangent to $V$ along $l_r$, which then we call a double line for $V$.
\end{lem}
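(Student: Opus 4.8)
The plan is to fix projective coordinates adapted to the line $l_r$, to translate both sides of the asserted equivalence into statements about binary quadratic forms, and then to invoke Lemma \ref{Caract_Sigma}(d). This is in substance \cite[(6.7)]{CG}, reworked in terms of the threefold $\Sigma$ introduced above.

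Since $r\in F$, I would choose homogeneous coordinates on $\mathbb P^4$ with $l_r=\{z_2=z_3=z_4=0\}$; because $l_r\subset V$ we may write $E=z_2g_2+z_3g_3+z_4g_4$ with $g_2,g_3,g_4\in S^2$. Setting $z_2=z_3=z_4=0$ one finds that $\partial E/\partial z_0$ and $\partial E/\partial z_1$ vanish identically on $l_r$, while $\partial E/\partial z_j$ restricts to the binary quadratic $\bar g_j:=g_j(z_0,z_1,0,0,0)$ for $j=2,3,4$. The triple $\bar g_2,\bar g_3,\bar g_4$ is independent of the chosen expression for $E$: the difference of two such expressions is a syzygy of the regular sequence $(z_2,z_3,z_4)$, hence an $S$-combination of the Koszul relations, each component of which is divisible by some $z_i$ with $i\in\{2,3,4\}$ and so vanishes on $l_r$. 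Thus $J_2|_{l_r}=\langle\bar g_2,\bar g_3,\bar g_4\rangle$, and its dimension is $2$ or $3$ (dimension $\le 1$ would yield a common zero on $l_r$ of all five partial derivatives, hence a singular point of $V$); by Lemma \ref{Caract_Sigma}(d) we have $r\in\Sigma$ exactly when this dimension equals $2$, i.e. exactly when $\bar g_2,\bar g_3,\bar g_4$ are linearly dependent.

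Next I would run over the planes through $l_r$: they form a $\mathbb P^2$, the one attached to $[a]=[a_2:a_3:a_4]$ being the plane $P_a$ spanned by $l_r$ and the point $q_a=(0,0,a_2,a_3,a_4)$, parametrized by $[z_0:z_1:t]\mapsto(z_0,z_1,ta_2,ta_3,ta_4)$. Substituting gives
\[ E|_{P_a}=t\Bigl[\Bigl(\sum_j a_j\bar g_j\Bigr)(z_0,z_1)+t\,L(z_0,z_1)+t^2N\Bigr] \]
for some binary linear form $L$ and some constant $N$; the factor $t$ cuts out $l_r$, so $P_a\cap V=l_r+C_a$ with $C_a$ the conic defined by the bracket. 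Then $P_a$ is tangent to $V$ along $l_r$ — equivalently, $q_a\in T_pV$ for every $p\in l_r$, equivalently $C_a$ contains $l_r$, equivalently $P_a\cap V$ is non-reduced along $l_r$ — if and only if the bracket is divisible by $t$, i.e. if and only if the binary quadratic $\sum_j a_j\bar g_j$ vanishes identically. Such an $[a]$ exists precisely when $\bar g_2,\bar g_3,\bar g_4$ are linearly dependent, hence, by the previous step, precisely when $r\in\Sigma$; this proves the equivalence, and moreover the plane $P_a$ is then unique.

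Finally, assuming $r\in F\cap\Sigma$ and choosing $[a]$ with $\sum_j a_j\bar g_j\equiv 0$, the displayed factorization becomes $E|_{P_a}=t^2\,(L(z_0,z_1)+tN)$ with $L+tN\not\equiv 0$, since a smooth cubic threefold contains no $2$-plane; hence $P_a\cap V=2l_r+l_s$ with $l_s=\{L+tN=0\}$, which is precisely the condition that $r$ be a double line in the sense of the Notation section. The computation is routine; the two places where care is needed are (i) checking that exactly two of the five partials restrict to $0$ on $l_r$ and that $\bar g_2,\bar g_3,\bar g_4$ is intrinsic, so that it may legitimately be identified with $J_2|_{l_r}$ and fed into Lemma \ref{Caract_Sigma}, and (ii) checking that the several formulations of ``tangency of a plane along $l_r$'' agree, since the statement phrases the geometric side via tangent planes whereas the passage to $\Sigma$ goes through the algebraic condition $\dim J_2|_{l_r}=2$.
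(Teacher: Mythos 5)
Your proof is correct. The paper itself does not prove this lemma --- it simply quotes the classical statement from Clemens--Griffiths (6.7) after having identified $\Sigma$ with the locus of lines of second type --- so your argument supplies the computation the paper leaves to the reference, and it does so along the intended route: you reduce both sides to the condition on the restricted Jacobian ideal, using Lemma \ref{Caract_Sigma}(d) on one side and the explicit factorization $E\vert_{P_a}=t\bigl(\sum_j a_j\bar g_j+tL+t^2N\bigr)$ on the other. All the delicate points are handled: the restrictions of the partials to $l_r$ are exactly $0,0,\bar g_2,\bar g_3,\bar g_4$ (so the Koszul argument for intrinsicness, while fine, is not even needed), the case $\dim J_2\vert_{l_r}\le 1$ is excluded by smoothness, the various formulations of ``tangent along $l_r$'' are shown to agree, and the final step identifies the tangent plane with a plane cutting $V$ in $2l_r+l_s$ (using that a smooth cubic threefold contains no plane), matching the definition of double line in the Notation section.
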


  Now we discuss some particular planes contained in  $\Sigma$. Consider  an Eckardt point $p \in V$. 
This means that through $p$ there is a curve of lines on $V$, 
and more precisely the tangent hyperplane $T_p(V)$ cuts $V$ in a cubic cone, with smooth base $E_p.$ These points do not appear on a general cubic threefold.
If we take $z_4 = 0 $ to be the equation of $T_p V$, and $p=(1:0:\dots:0)$, 
then  the equation for $V$ is of type $ K(z_1,z_2,z_3) + z_4 Q( z_0,z_1,z_2,z_3,z_4) = 0 .$
Given  a  line $l$  such that $ p \in  l  \subset  T_p V $ one can assume  that $l$ has equations $z_2= z_3 = z_4 = 0 $.  In this way it is easy to see that  
$l\in \Sigma$, and therefore writing  $ \pi _ p $ for  the plane of such lines  
we have $ \pi _p \subset \Sigma \subset   \bG$ and the intersection $F \cap  \pi _p = E_p \, .$


\subsection{Geometry of the differential forms on $F$}  In \cite{CG} we find the following important properties of the Fano surface $F\subset \bG\subset \mathbb P^9$ which are crucial tools for our work.
\begin{thm} (Clemens-Griffiths).
\begin{enumerate} 
\item [a)] There is a canonical isomorphic between the Albanese variety $Alb(F)$ and the intermediate Jacobian
$JV$ of $V.$
 \item [b)]  The Albanese map $a:F\to JV= Alb(F)$  coincides with the Abel-Jacobi on $V$ and it is an embedding; 
 \item [c)] (Tangent bundle theorem)  $T_F$ is the restriction to $F$ of the tautological rank 
$2$ bundle on the Grassmannian $\bG$. The Gauss map  $g$ 
attached to $a$ 
is the tautological embedding $F\hookrightarrow \bG$.  \item [d)] The canonical embedding $F\hookrightarrow \mathbb P^9$
is the composition of $g$ with the Pl\"ucker embedding  $\bG\hookrightarrow \bP^9. $
 \end{enumerate}
\end{thm}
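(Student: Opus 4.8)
\emph{Proof sketch (proposed).}
The plan is to run the two mechanisms at the heart of the Clemens--Griffiths theory. The first is the incidence correspondence $\cL\subset F\times V$, a $\bP^1$-bundle $p\colon\cL\to F$ which is finite (of degree $6$ over a general point) under the other projection $q\colon\cL\to V$; it transports Hodge structures and produces (a). The second is the deformation theory of a single line $l\subset V$ together with the residue description of $H^{2,1}(V)$, which controls $T_F$ and produces (c); parts (b) and (d) then follow formally. As setup I would first identify $F$ with the zero locus of the section $s\in H^0(\bG,\mathrm{Sym}^3\mathcal S^\ast)$ obtained by restricting $E$ to the varying line, where $\mathcal S$ is the tautological rank $2$ subbundle; thus $\codim_{\bG}F=\rk\,\mathrm{Sym}^3\mathcal S^\ast=4$ and $\dim F=2$. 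Using $T_\bG=\mathcal S^\ast\otimes\mathcal Q$, $\det\mathcal Q=\det\mathcal S^\ast=\mathcal O_\bG(1)$ and $K_\bG=\mathcal O_\bG(-5)$, adjunction gives $K_F=(K_\bG\otimes\det\mathrm{Sym}^3\mathcal S^\ast)|_F=\mathcal O_\bG(1)|_F$, the restriction of the Pl\"ucker class; and a Koszul resolution of $\mathcal O_F$ fed through Bott vanishing on $\bG$ computes $q(F)=h^1(\mathcal O_F)=5$ and $p_g(F)=h^0(F,K_F)=10$, and shows that the restriction maps $H^0(\bG,\mathcal S^\ast)\to H^0(F,\mathcal S^\ast|_F)$ and $H^0(\bG,\mathcal O_\bG(1))=\Lambda^2 R^1\to H^0(F,K_F)$ are isomorphisms.

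For (a) I would use the cylinder homomorphism $\Phi:=p_\ast q^\ast\colon H^3(V,\bZ)\to H^1(F,\bZ)$; it is a morphism of Hodge structures of type $(-1,-1)$, hence sends $H^{2,1}(V)$ into $H^{1,0}(F)$. Matching the residue description $H^{2,1}(V)=\{\mathrm{Res}(A\,\Omega/E^2)\colon A\in R^1\}$ with the $1$-forms $\Phi$ produces on $F$ shows $\Phi$ injective on $H^{2,1}(V)$; since both spaces are $5$-dimensional, $\Phi\colon H^{2,1}(V)\xrightarrow{\ \sim\ }H^{1,0}(F)$, and by Hodge symmetry $\Phi_\bC$ is an isomorphism of weight $3$ Hodge structures. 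To promote this to an isomorphism of \emph{integral} Hodge structures I would compute the self-correspondence $q_\ast p^\ast\circ\Phi$ on $H^3(V,\bZ)$: it is multiplication by an integer which the geometry of $\cL$ pins down to $\pm1$, so $\Phi$ is unimodular. Dualizing gives $JV=H^{1,2}(V)/H_3(V,\bZ)\cong H^0(F,\Omega^1_F)^\ast/H_1(F,\bZ)=\mathrm{Alb}(F)$, and the intersection form on $H^3(V,\bZ)$ (which defines $\Theta_{JV}$) transports under $\Phi$ to the polarizing form on $H^1(F,\bZ)$, so the isomorphism carries $\Theta_{JV}$ to the principal polarization of $\mathrm{Alb}(F)$.

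For (c), fix $l=\bP(W)\in F$ with $W\subset R^4$. From $0\to N_{l/V}\to N_{l/\bP^4}\to N_{V/\bP^4}|_l\to0$, that is $0\to N_{l/V}\to\mathcal O_l(1)^{\oplus3}\to\mathcal O_l(3)\to0$, the bundle $N_{l/V}$ has rank $2$ and degree $0$, hence equals $\mathcal O_l\oplus\mathcal O_l$ or $\mathcal O_l(1)\oplus\mathcal O_l(-1)$; in either case $h^0=2$, $h^1=0$, which reconfirms that $F$ is smooth of dimension $2$ and gives $T_{[l]}F=H^0(l,N_{l/V})$. Globally, $0\to T_F\to T_\bG|_F\to\mathrm{Sym}^3\mathcal S^\ast|_F\to0$ recovers $K_F=\mathcal O_\bG(1)|_F$ on determinants, but only $\det T_F$; to identify $T_F$ itself with $\mathcal S|_F$ I would compute the differential of the Albanese map $a\colon F\to JV$. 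Under $T_0JV\cong H^{2,1}(V)^\ast\cong(R^1)^\ast\cong R^4$, the map $da_{[l]}$ sends a first-order deformation $\sigma\in H^0(l,N_{l/V})$ to the functional $\omega\mapsto\langle\Phi(\omega),\sigma\rangle_{[l]}$ on $H^{2,1}(V)$; inserting the residue form of $\omega$ and the explicit shape of the $1$-form $\Phi(\omega)$ on $F$, one computes that $da_{[l]}$ is injective with image exactly $W\subset R^4$. Hence $da$ exhibits $T_F$ as the subbundle of $R^4\otimes\mathcal O_F$ with fibre $W$ at $[l]$, i.e.\ $T_F\cong\mathcal S|_F$, and the Gauss map of $a$, namely $[l]\mapsto[\,da_{[l]}(T_{[l]}F)\,]=[W]$, is the tautological embedding $F\hookrightarrow\bG$.

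For (b), $a$ is the Abel--Jacobi map by construction, $a(l)=AJ(l-l_0)$, which makes sense since $l-l_0$ is homologous to $0$ ($H_2(V,\bZ)=\bZ$ is generated by the class of a line); it is an immersion by (c), and it is injective because the Gauss map factors through $a$ and is the injective tautological inclusion, so $a$ is an injective immersion of a projective variety, hence a closed embedding. For (d), $K_F=\mathcal O_\bG(1)|_F$ with $h^0(F,K_F)=10=h^0(\bG,\mathcal O_\bG(1))$ and the restriction an isomorphism, so the complete canonical system of $F$ is cut by $|\mathcal O_\bG(1)|$; therefore the canonical map of $F$ coincides with $g$ followed by the Pl\"ucker embedding $\bG\hookrightarrow\bP^9$, and it is an embedding. \textbf{The main obstacle} is the identification $da_{[l]}(T_{[l]}F)=W$ in (c): the determinant computation only pins down $\det T_F=\mathcal O_\bG(-1)|_F$, so one must genuinely unwind the correspondence-induced holomorphic $1$-forms on $F$ to see that the infinitesimal Abel--Jacobi map of lines lands in, and fills up, the correct $2$-plane; the unimodularity step in (a) is a close second.
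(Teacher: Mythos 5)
This theorem is not proved in the paper at all: it is quoted, with attribution, from Clemens--Griffiths \cite{CG} (and Tjurin \cite{T}), so the only comparison available is with the classical arguments, and your sketch does follow that classical route: the zero-locus description of $F$ inside $\bG$ with adjunction giving $K_F=\mathcal O_{\bG}(1)|_F$ and the Koszul computation of $q(F)=5$, $p_g(F)=10$; the cylinder map for (a); the normal-bundle sequence and the differential of the Abel--Jacobi map for (c); and (d) from the identification of the canonical system with the restricted Pl\"ucker system. However, two steps are genuine gaps as written. The unimodularity step in (a) does not typecheck: $p^\ast$ sends $H^1(F)$ to $H^1(\cL)$ and $q_\ast$ preserves cohomological degree (both $\cL$ and $V$ are threefolds), so $q_\ast p^\ast$ annihilates $H^1(F)$ because $H^1(V)=0$; the transpose correspondence naturally maps $H^3(F)\to H^3(V)$, and to compose it with $\Phi$ you must first insert a polarization operator $H^1(F)\to H^3(F)$. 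That insertion is precisely where the content lies: what Clemens--Griffiths actually establish is that $\Phi$ carries the unimodular intersection form on $H^3(V,\bZ)$ to the form on $H^1(F,\bZ)$ defining a principal polarization of $Alb(F)$, by a direct analysis of the cylinder construction; the assertion that ``the geometry of $\cL$ pins the composite down to $\pm1$'' is not available as stated (and even granting a well-defined composite, its being a scalar is not automatic for special $V$).

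The second gap is the injectivity of $a$ in (b). The Gauss map is defined on $F$ through $da$, not on the image, so it does not ``factor through $a$'': if $a(s)=a(t)$ with $s\neq t$ one simply gets a double point of $a(F)$ with two branches whose (distinct) tangent planes are recorded by the injective tautological map, and no contradiction arises. Injectivity of the Abel--Jacobi map of the Fano surface requires a separate geometric argument; in \cite{CG} and \cite{T} it is obtained, for instance, by comparing the incidence curves $C_s$ and $C_t$ attached to two lines with the same Abel--Jacobi image and showing that distinct lines yield distinct such curves. Apart from these two points, the architecture of your sketch (including the key step you correctly single out, namely the residue computation showing that $da_{[l]}$ has image exactly the $2$-plane $W$ defining $l$, which is the tangent bundle theorem) is the standard and correct line of argument.
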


 Consider $H\in R^1\setminus\{0\}$ and the corresponding  hyperplane in $\bP^4$, and let $\omega_H$ be the corresponding 
holomorphic 1-form on $F.$ The zero locus of 
 $\omega_H$ is the set of lines of $F$ contained in $H$. 
 When $H$ is not tangent
 to $V$ these are the $27$ lines on the cubic surface, this gives $c_2(F)=27.$


\begin{defn}\label{schubform} Given a plane $\pi=H_1\cap H_2 \subset \mathbb P^4 $ we write
$$\Omega_{\pi } =\omega_{H_1}\wedge \omega_{H_2}\in H^0(F,K_F),$$
which is determined  up to a constant.
\end{defn}

We denote by $Z(\pi )$ the canonical divisor on $F$ corresponding to  $\Omega_{\pi }$. 
Then  $Z(\pi)$  is the locus  of the lines which intersect $\pi$,
because it  is the restriction to $F$  of the  Schubert  divisor associated with $ \pi $.

Following \cite{CG} we define $C_t\subset F$ to be  the closure of the set of lines in 
$V$ which  intersect a different fixed line $l_t \subset V$.
Then $t\in C_t$ if and only if $t$ is a double line. 

\begin{defn}
Three lines $l_{t_1},l_{t_2},l_{t_3}$ of $V$ form a {\em triangle} if  they are cut on $V$ by a plane $P:$  
$$ V\cdot P= l_{t_1}+l_{t_2}+l_{t_3}.$$
A triangle is called {\it rare} if the lines of the triangle are incident at the same point.
\end{defn}

\begin{rem} Employing  classical terminology   \cite{Dol} we see that  the parabolic points on $V$ coincide with the vertices of  rare triangles. The locus of parabolic points 
is the intersection of $V$ with its Hessian hypersurface, which is of degree $5$,  and thus  a 
line on $V$  contains $5$ vertices of rare triangles, counted with multiplicities. For a parabolic point $P$ there are either $2$  or  $1$ or $\infty$  rare triangles with vertex $P$.
The  first case is when  $P$ is a biplanar point. The second if it is uniplanar, as a singular element of  the cubic surface cut on $V$ by its tangent hyperplane.  The last case goes under the name of Eckardt point.
 It happens when
the tangent section is a cubic cone.  \end{rem}
 
Using the definition  of a triangle we  can state from   \cite{Fano1}:

\begin{thm} (Fano).
The following hold:
$C_t^2=5$, $p_a(C_t)=11$ and 
 for any triangle $l_{t_1},l_{t_2},l_{t_3}$  the cycle 
$C_{t_1}+C_{t_2}+C_{t_3}  $ is a canonical divisor on $F$, so that  numerically $K_F=3C_t.$
\end{thm}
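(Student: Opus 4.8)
First I would fix the numerical invariants of $F$. By the Tangent bundle theorem (part c) above), $K_F=\cO_F(1)$ in the Pl\"ucker embedding $F\hookrightarrow\bP^9$, so every divisor $Z(\pi)$ of Definition \ref{schubform} --- and more generally $Z(\pi)$ for an arbitrary plane $\pi\subset\bP^4$, the locus of lines of $F$ meeting $\pi$ --- lies in $|K_F|$, being cut on $F$ by a hyperplane of $\bP^9$. From $h^{1,0}(F)=5$ and $h^{2,0}(F)=\dim\Lambda^2R^1=10$ one gets $\chi(\cO_F)=1-5+10=6$, so Noether's formula together with $c_2(F)=27$ yields $K_F^2=12\cdot 6-27=45$.

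Next I would prove the triangle identity. Fix a triangle $V\cdot P=l_{t_1}+l_{t_2}+l_{t_3}$. If a line $l_s\subset V$ meets $P$, any point of $l_s\cap P$ lies on $V\cap P=l_{t_1}\cup l_{t_2}\cup l_{t_3}$, so $l_s$ meets some $l_{t_i}$; conversely $l_s\cap l_{t_i}\neq\emptyset$ forces $l_s\cap P\neq\emptyset$. Hence $\supp Z(P)=C_{t_1}\cup C_{t_2}\cup C_{t_3}$ (no component is lost, since $t_i\in C_{t_j}$ for $j\neq i$). I would then check, by a local computation on $\bG$ at a generic point $s\in C_{t_i}$ --- where $l_s$ and $l_{t_i}$ meet simply, away from the vertices, and span a plane, so that in particular $l_s$ is not a double line --- that the Schubert equation defining $Z(P)$ restricts to $F$ with nonzero differential there; thus $Z(P)$ is reduced at the generic point of each $C_{t_i}$, giving $Z(P)=C_{t_1}+C_{t_2}+C_{t_3}$ as divisors. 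Therefore $C_{t_1}+C_{t_2}+C_{t_3}\in|K_F|$ is a canonical divisor. Since the curves $C_t$ are parametrized by the connected surface $F$ (via the incidence correspondence in $F\times F$), they are all numerically equivalent, whence $3C_t\equiv K_F$.

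It then remains to read off the numbers. From $3C_t\equiv K_F$ and $K_F^2=45$ one gets $C_t^2=\tfrac19K_F^2=5$ and $C_t\cdot K_F=\tfrac13K_F^2=15$, and adjunction on the smooth surface $F$ gives $2p_a(C_t)-2=C_t\cdot(C_t+K_F)=5+15=20$, i.e. $p_a(C_t)=11$.

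The main obstacle is the multiplicity-one claim: that the Schubert section cutting out $Z(P)$ vanishes to order exactly $1$ along each $C_{t_i}$, equivalently that $F$ and the relevant Schubert divisor of $\bG$ meet properly along the triangle components. I would settle this by the explicit local transversality computation sketched above, exploiting that the generic line of $C_{t_i}$ is not a double line (so the tangency phenomena governing $\Sigma\cap F$ do not interfere). A degree count comparing $Z(P)\cdot\cO_F(1)$ with $3\,C_t\cdot\cO_F(1)$ would also give the relation, but only once $\deg C_t$ is known, so it is not independent of the statement being proved; the honest route is the local computation.
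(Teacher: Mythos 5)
The paper offers no proof of this statement at all: it is imported verbatim from Fano's 1904 paper (``Using the definition of a triangle we can state from \cite{Fano1}''), so there is no internal argument to compare yours against. Your reconstruction follows the standard modern route and is consistent with everything the paper does use elsewhere: $\omega_F\cong\cO_F(1)$ from the tangent bundle theorem, $c_2(F)=27$, $\chi(\cO_F)=1-5+10=6$, hence $K_F^2=45$ by Noether (the paper itself invokes $K_F^2=45$ in the proof of Lemma \ref{degree90}), and then the identification of $Z(P)$ with the triangle divisor, numerical equivalence of the $C_t$ via the algebraic family over $F$, and adjunction. The set-theoretic identity $\supp Z(P)=C_{t_1}\cup C_{t_2}\cup C_{t_3}$ and the final numerics are all correct.

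The one point to be careful about is the clause ``for any triangle.'' Your transversality argument establishes multiplicity one at a \emph{generic} point of each component of each $C_{t_i}$, and for that you need the generic such line to be of first type (so that $H^0(N_{l_s/V})$ generates the fibre at $p=l_s\cap l_{t_i}$) and the plane $P$ not to be tangent to $V$ at $p$; the latter holds at all but finitely many $p\in l_{t_i}$ precisely because $l_{t_i}$ appears with multiplicity one in $V\cdot P$. For a triangle in general position this is fine, and that is all you need for the numerical conclusions $K_F\equiv 3C_t$, $C_t^2=5$, $p_a(C_t)=11$, since one good triangle suffices. But for special triangles the argument as written does not close: for a rare triangle whose vertex is an Eckardt point $p_0$, the elliptic curve $E_{p_0}$ is a common component of all three (reduced) curves $C_{t_i}$, so the asserted divisor identity requires $Z(P)$ to contain $E_{p_0}$ with multiplicity three, which is not a transversality statement at a generic point of $E_{p_0}$ and needs a separate local computation (similarly if some $C_{t_i}$ had a component consisting of second-type lines). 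So either restrict the divisor identity to triangles avoiding these degeneracies --- which is enough for the theorem's quantitative content --- or add an argument (e.g.\ a limiting/specialization argument from nearby general triangles, using that $Z(P)$ varies in the fixed linear system $|K_F|$) to cover the special cases.
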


Rare  triangles are  useful  for   the proof of the following:
\begin{prop} 
Let $r\in R$. The curve $C_r$ is contained in $Z(\pi)$ if and only if $l_r\subset \pi.$
\end{prop}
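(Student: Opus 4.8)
The plan is to prove both implications using the geometric description of $Z(\pi)$ and $C_r$ in terms of incidence of lines in $\mathbb P^4$. First recall the relevant facts: by definition $Z(\pi)\subset F$ is the locus of lines $l\subset V$ that meet the fixed plane $\pi$, while $C_r\subset F$ is the closure of the locus of lines $l\subset V$ that meet the fixed line $l_r\subset V$ (with $r\in C_r$ exactly when $r$ is double). The easy direction is ``$\Leftarrow$'': if $l_r\subset\pi$, then every line $l$ meeting $l_r$ automatically meets $\pi$, so $C_r\subset Z(\pi)$ at the level of closed points, hence as divisors.

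For the converse ``$\Rightarrow$'', suppose $C_r\subset Z(\pi)$, i.e.\ every line of $V$ incident to $l_r$ also meets $\pi$; I want to deduce $l_r\subset\pi$. The idea is to exploit the fact that through a general point $p\in l_r$ there pass several lines of $V$ (other than $l_r$ itself): projecting $V$ from $p$ exhibits the lines of $V$ through $p$ as a curve, so there is at least a finite nonempty set of such lines $l\neq l_r$, each lying in $C_r$ and hence meeting $\pi$. I would argue that as $p$ varies over $l_r$, the union of these incident lines sweeps out a surface $S\subseteq V$ containing $l_r$; every line of $S$ through a point of $l_r$ meets $\pi$. If $l_r\not\subset\pi$, then $l_r$ meets $\pi$ in at most one point $q$, so for $p\in l_r$ with $p\neq q$ a line $l$ through $p$ meeting $\pi$ must meet $\pi$ at a point $\neq p$; letting $p$ vary, the incident lines would all have to pass through the \emph{fixed} plane $\pi$ while emanating from the moving point $p$ on $l_r$. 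One then gets a one-parameter family of planes $\langle p, (\text{point of }\pi)\rangle$ forcing a constraint: the surface $S$ would be contained in the $3$-fold $\langle l_r,\pi\rangle$ spanned by $l_r$ and $\pi$ (which is a hyperplane $H$, since $\dim\langle l_r,\pi\rangle\le 3$). But then $S\subset V\cap H$, a cubic surface, and $C_r$ maps into the $27$ lines of that surface — a finite set — contradicting $\dim C_r=1$ (recall $C_r^2=5$, $p_a(C_r)=11$, so $C_r$ is a genuine curve). Hence $l_r\subset\pi$.

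The main obstacle is making the sweeping-surface argument precise: one must check that the lines of $V$ through a general $p\in l_r$ other than $l_r$ really do exist and vary algebraically, and that the resulting incident lines do not all degenerate to pass through a single point of $l_r$ or through the fixed point $q=l_r\cap\pi$ (the latter would not force $l_r\subset\pi$ directly, so one needs to rule out that every incident line passes through $q$). To handle this cleanly I would use the tangent bundle theorem and the Gauss/Plücker picture: $Z(\pi)$ is the restriction to $F$ of the Schubert divisor $D_\pi$ on $\mathbb G$, and membership $l\in D_\pi$ is the vanishing of a Plücker-linear form; the condition $C_r\subset Z(\pi)$ then says this linear form vanishes identically along the curve $C_r\subset\mathbb G$. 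Computing which Plücker coordinates are forced to vanish on $C_r$ — equivalently, identifying the linear span of $C_r$ inside $\mathbb P^9$ — one sees that the only Schubert hyperplanes containing $C_r$ are those coming from planes $\pi\supset l_r$, which is exactly the claim. This linear-algebra reformulation sidesteps the delicate incidence degenerations and is the route I would actually write up.
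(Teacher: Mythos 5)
Your easy direction is fine, and the second half of your incidence argument (once $l_r$ and $\pi$ meet in a point, the span $H=\langle l_r,\pi\rangle$ is a hyperplane, all lines of $C_r$ are forced into the cubic surface $V\cap H$, contradiction) is essentially the paper's second step. But there is a genuine gap at the start of the hard direction: you assert that $\dim\langle l_r,\pi\rangle\le 3$, which is false when $l_r\cap\pi=\emptyset$ — a line and a $2$-plane in $\mathbb P^4$ are disjoint in general position, and then their span is all of $\mathbb P^4$, so the containment $S\subset\langle l_r,\pi\rangle$ gives no information and no hyperplane $H$ exists. Establishing $l_r\cap\pi\neq\emptyset$ is exactly the delicate first step of the paper's proof, and it is not a formality: the paper argues that for any triangle $l_r,l_{r_1},l_{r_2}$ cut on $V$ by a plane through $l_r$, the vertex $l_{r_1}\cap l_{r_2}$ must lie in $\pi$ (if the two intersection points of $l_{r_1},l_{r_2}$ with $\pi$ were distinct they would span a line of $\pi$ meeting $l_r$), and then invokes a \emph{rare} triangle — one whose three lines are concurrent, whose vertex is a parabolic point lying on $l_r$ itself — to produce a point of $l_r\cap\pi$. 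Your proposal contains no substitute for this step; the worry you flag (that all incident lines might pass through the single point $q=l_r\cap\pi$) is not the real difficulty, since in the disjoint case there is no such point at all.

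The fallback route you say you would ``actually write up'' does not close the gap either: the sentence ``one sees that the only Schubert hyperplanes containing $C_r$ are those coming from planes $\pi\supset l_r$'' is precisely the proposition to be proved, restated in the Pl\"ucker embedding, and no argument is offered for it. It can in fact be made rigorous, but only by controlling the space of canonical (i.e.\ hyperplane) sections of $F$ vanishing on $C_r$: Tjurin's result $h^0(F,\omega_F(-C_r))=3$ (quoted later in the paper as Proposition 4.5) identifies that space with $\Lambda^2 W(r)$, and then a decomposable form $\omega_{H_1}\wedge\omega_{H_2}$ lies in $\Lambda^2 W(r)$ exactly when $H_1,H_2\in W(r)$, i.e.\ when $\pi\supset l_r$. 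Since you neither prove nor cite such a dimension count, this second route is a restatement, not a proof. Also, as a minor point, ``the $27$ lines of that surface'' should be ``the finitely many lines of that surface'' (the hyperplane section $V\cap H$ may be singular), and the cone case (Eckardt tangent hyperplane) would need a word if you went that way.
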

\begin{proof} 
If $l_r\subset \pi$ it follows immediately that $C_r\subset Z(\pi).$
 Now assume that $C_r \subset Z(\pi)$, so that any line $l_t\neq l_r$ of $V$  that intersects $l_r$ intersects also $\pi.$
 First we show that $l_r\cap \pi \neq \emptyset$ 
which we prove  by contradiction.
We take any triangle $l_r,l_{r_1},l_{r_2}$  of which  $l_r$ takes part,   and we proceed to show  first that the vertex 
 $l_{r_1}\cap l_{r_2}$ is in $\pi.$ Otherwise consider  then 
$P=l_r\vee l_{r_1} \vee l_{r_2},$  the plane of the triangle. 
By assumption   $l_{r_1}\cap \pi \neq \emptyset   \neq l_{r_2}\cap \pi $. If the two points of intersection
are different then they span a line in  $\pi \cap P$, which  intersects $l_r$, and thus $ l_r\cap \pi \neq \emptyset\, .$ So $\pi$
contains all the vertices of the triangles along $l_r.$ Taking now  a rare triangle with vertex on $l_r,$
we see that this point  belongs to $\pi \cap l_r.$\\
Having proved that $l_r\cap \pi \neq \emptyset$ 
we show next that $l_r\subset \pi$. If not,  the linear span $H=\pi \vee l_r$ is a hyperplane which satisfies that
the general point  $s\in C_r$ has the property that $l_s\subset H$, and then this is true in fact for all the points in $C_r$.
On the other hand it is obvious that there are lines on $V$ which 
intersects $l_r$ but do not lie in the fixed hyperplane $H$. 
\end{proof}
 
\begin{defn}\label{def_base_locus}
Given a line  $l_s$ in $\bP^4$ we consider  the locus of lines in $V$ which intersect it:
$$D_s:=\{r\in F \,\vert \,l_r\cap l_s\neq \emptyset\} \subset F. $$
 Clearly   \begin{equation*}
D_s=\{r\in F \, \vert \, \Omega_{\Pi}(r)=0 \text{ for all $2$- planes } \Pi\supset l_s\}. 
\end{equation*} \end{defn}

More precisely,   $D_s$ is the subscheme cut on $F$  by the net  (two dimensional linear system)  of Schubert  hyperplanes 
associated with the net of 2-planes which support $l_r$.

We have 
\begin{cor}\label{base_locus_r_in_F}
$D_s\supset C_r \iff r=s$.
\end{cor}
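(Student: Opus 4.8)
The plan is to prove the two implications separately, with the content concentrated entirely in the forward direction. For the easy implication, suppose $r = s$. Then $D_s = D_r$ consists of all lines of $V$ meeting $l_r$, which by definition is exactly $C_r$ (note $r \in C_r$ precisely when $r$ is a double line, but in any case $C_r \subseteq D_r$ set-theoretically). Hence $D_s \supseteq C_r$ trivially. I would dispose of this in one sentence.

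For the forward direction, assume $D_s \supseteq C_r$; I want to conclude $l_s = l_r$, i.e.\ $r = s$. The strategy is to mimic the argument in the preceding Proposition (the one characterizing when $C_r \subseteq Z(\pi)$), which handled the analogous question with a $2$-plane $\pi$ in place of the line $l_s$. First I would show $l_r \cap l_s \neq \emptyset$: any line $l_t \neq l_r$ of $V$ meeting $l_r$ lies in $C_r$, hence meets $l_s$ by hypothesis. Running through the triangles $l_r, l_{r_1}, l_{r_2}$ cut on $V$ by planes through $l_r$, and in particular through a \emph{rare} triangle along $l_r$ (which exists, since $l_r$ carries $5$ vertices of rare triangles by the Remark following Fano's theorem), the vertex argument from the Proposition forces the common point of such a rare triangle to lie on $l_s$, giving $l_r \cap l_s \neq \emptyset$. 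The point is that a rare triangle has its three lines concurrent, and each of the other two lines meets $l_s$, so the common point is on $l_s$.

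Next, knowing $l_r$ and $l_s$ meet, suppose for contradiction that $l_r \neq l_s$. Then $\pi := l_r \vee l_s$ is a genuine $2$-plane. Every $s' \in C_r$ with $l_{s'} \neq l_r$ meets both $l_r$ (by definition of $C_r$) and $l_s$ (by hypothesis $D_s \supseteq C_r$); a line meeting the two coplanar lines $l_r, l_s$ either lies in $\pi$ or passes through $l_r \cap l_s$. This should force $l_{s'} \subseteq \pi$ for the general (hence every) point $s'$ of $C_r$ — i.e.\ $C_r \subseteq Z(\pi)$ — once one rules out that a whole component of $C_r$ consists of lines through the single point $l_r \cap l_s$; but lines of $V$ through a fixed point form at most a finite set unless that point is an Eckardt point, and even then they do not sweep out the surface $C_r$, whose lines genuinely vary. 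By the previous Proposition, $C_r \subseteq Z(\pi)$ implies $l_r \subseteq \pi$, which we already have; the extra information is that then \emph{all} lines of $C_r$ lie in the hyperplane spanned appropriately, and this contradicts the obvious fact (used at the end of that Proposition) that $V$ contains lines meeting $l_r$ but not contained in any fixed hyperplane through $l_r$. Hence $l_r = l_s$.

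The main obstacle is the intermediate step in the second paragraph: translating ``$l_{s'}$ meets both coplanar lines $l_r$ and $l_s$'' into ``$l_{s'} \subseteq \pi$'' for a Zariski-dense set of $s' \in C_r$. One must carefully exclude the degenerate alternative that an irreducible component of $C_r$ is entirely composed of lines through the point $q := l_r \cap l_s$. Since such lines form a cone with vertex $q$ inside $V$, and $V$ contains only finitely many lines through a general point (and the lines of $C_r$ form a genuinely moving family, as $C_r$ is an irreducible curve of positive genus $p_a(C_r) = 11$ whose general member is not the fixed line $l_r$), this degeneration cannot occur; once it is excluded, one reduces cleanly to the already-proven Proposition. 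I expect this to be the only place requiring a real argument; everything else is a direct transcription of the triangle/rare-triangle technique already in hand.
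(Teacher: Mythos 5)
Your forward implication reruns the triangle machinery from scratch, but you have missed the one-line reduction the paper intends: by Definition \ref{def_base_locus}, $D_s$ is cut on $F$ by the Schubert divisors $Z(\Pi)$ of \emph{all} $2$-planes $\Pi\supset l_s$. Hence $C_r\subset D_s$ means $C_r\subset Z(\Pi)$ for every such $\Pi$; the preceding Proposition gives $l_r\subset\Pi$ for every $\Pi\supset l_s$, and intersecting (two planes with $\Pi_1\cap\Pi_2=l_s$ already suffice) yields $l_r\subset\bigcap_{\Pi\supset l_s}\Pi=l_s$, i.e.\ $r=s$; the converse is trivial. No new geometry, and in particular no analysis of the point $q=l_r\cap l_s$, is needed.

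Concerning your self-contained argument: step one is essentially fine once you add the missing case analysis (if the two lines of a rare triangle meet $l_s$ at distinct points, then $l_s$ lies in the plane of the triangle and meets $l_r$ directly; if at the same point, that point is the common vertex, which lies on $l_r$) --- as stated, ``each of the other two lines meets $l_s$, so the common point is on $l_s$'' is a non sequitur. The genuine gap is in step two, where you exclude the alternative that a component of $C_r$ consists of lines through $q$ by asserting that $C_r$ is irreducible of genus $11$ with ``genuinely varying'' lines. That assertion fails exactly in the dangerous case: if $q$ is an Eckardt point lying on $l_r$, the elliptic curve $E_q$ of lines through $q$ \emph{is} a component of $C_r$, so $C_r$ is reducible and a whole component does consist of lines through $q$. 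What saves the argument is not irreducibility but the fact that $C_r$ cannot be a multiple of $E_q$ (for instance $C_r^2=5>0$ while any multiple of $E_q$ has negative self-intersection, since $\cO_{E_q}(E_q)\simeq\cO_{E_q}(-1)$); hence some one-dimensional part of $C_r$ has general member not passing through $q$, and by your dichotomy all of its lines lie in the plane $\pi=l_r\vee l_s$, which is absurd because the smooth $V$ contains no plane, so $\pi\cap V$ is a cubic curve carrying at most three lines. Note finally that your endgame is circular as written: from $C_r\subset Z(\pi)$ the Proposition only returns $l_r\subset\pi$, which holds by construction and is no contradiction; the contradiction must come from infinitely many distinct lines of $C_r$ lying inside $\pi$, as above.
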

\begin{proof} Direct  consequence of the proposition.
\end{proof}

 Consider inside $F$ the set $W_p$ of lines on $V$ which pass through a point $p$. Out of
a finite number of points (which are actually the Eckardt points) this has a natural structure of a finite scheme of
length $6$. 
For an Eckardt point,  $W_p$  parameterizes  the lines in a cone of vertex $p$ and basis the plane smooth 
cubic  $E_p$. In this situation  we shall identify   $W_p$   with $E_p$.

\begin{prop} \label{finite_base_locus}
  For   $s\in \Sigma - F $, one has:
\begin{enumerate}
\item [a)] If $l_s$  does not contain any Eckardt point then the base locus  $D_s$ is  of dimension $0$.
\item [b)] If $l_s$  contains some  Eckardt points $ \{p_i \} $   then   $D_s$ 
contains the  cubic curves $ E_{p_i} $ and possibly  a residual  finite set $Z$ .
\end {enumerate}
Hence, for  any  element $s\in \Sigma - F $, the scheme $D_s$  is not of dimension  zero only when  $l_s$ meets  an Eckardt point.
\end{prop}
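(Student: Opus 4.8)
The plan is to compare $D_s$ with the curves $C_r$ for $r \in F$, using Corollary \ref{base_locus_r_in_F}. Since $s \notin F$, the line $l_s$ is not contained in $V$, so by Corollary \ref{base_locus_r_in_F} no curve $C_r$ is contained in $D_s$; this shows that $D_s$ contains no component of the form $C_r$. Now $D_s$ is cut on $F$ by the net of Schubert hyperplanes associated to the $2$-planes through $l_s$, and the key point is to control what a positive-dimensional component of $D_s$ could look like. I would first argue that any irreducible curve $Y \subset D_s$ must consist of lines $l_t$ all passing through a single point $q$ of $l_s$: if the points $l_t \cap l_s$ vary along $l_s$ as $t$ moves in $Y$, then since $l_s \subset \Sigma$ is of second type, the pencil of hyperplanes through $l_s$ restricted to the plane $l_t \vee l_s$ would force (via the tangency/Schubert incidence and a dimension count on $F$, whose lines through a general point form the length-$6$ scheme $W_p$) a contradiction with $Y$ being a curve. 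So $Y \subset W_q$ for some $q \in l_s$, and $W_q$ is positive-dimensional only at an Eckardt point, where it is the cubic $E_q$. This forces $q$ to be an Eckardt point $p_i \in l_s$ and $Y = E_{p_i}$.

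Conversely, if $p_i \in l_s$ is an Eckardt point, then every line of $V$ through $p_i$ meets $l_s$ at $p_i$, so $W_{p_i} = E_{p_i} \subset D_s$; this gives part b). Part a) is then the special case where $l_s$ contains no Eckardt point: by the previous paragraph every irreducible curve in $D_s$ would be some $E_{p_i}$ with $p_i \in l_s$ Eckardt, which is excluded, so $D_s$ is finite. The final sentence is just the contrapositive combination of a) and b).

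The main obstacle is the rigidity step: showing that an irreducible curve $Y \subset D_s$ cannot have $l_t \cap l_s$ moving along $l_s$. Here I would use that $l_s$ is a line of second type (Remark \ref{dual}, Lemma \ref{Caract_Sigma}): the plane $l_t \vee l_s$ is tangent to $V$ along part of its locus, and as $t$ varies the incidence point sweeping out $l_s$ would produce, through each point of $l_s$, a positive-dimensional family of lines of $V$ — but through a non-Eckardt point of $V$ there are only finitely many ($W_p$ has length $6$), and $l_s$ can meet only finitely many Eckardt points since these are finite in number on $V$. One has to check that the single point case (all $l_t$ through a fixed non-Eckardt $q \in l_s$) likewise yields only finitely many $t$, which again is $|W_q| = 6 < \infty$. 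Assembling these finiteness statements carefully — and ruling out the degenerate possibility that $Y$ maps to a point of $l_s$ but is still a curve in $F$ — is where the real content lies; the rest is bookkeeping with the Schubert incidence conditions of Definition \ref{def_base_locus}.
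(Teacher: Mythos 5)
The paper actually states this proposition without proof, because it follows in one line from the discussion of $W_p$ that immediately precedes it --- and that one line is precisely what your argument never uses. Since $s\notin F$, the line $l_s$ is not contained in $V$, so $l_s\cap V$ is a finite set (at most three points). Any line $l_t\subset V$ meeting $l_s$ meets it at a point lying on $V$, hence at one of these finitely many points; therefore, set-theoretically, $D_s=\bigcup_{q\in l_s\cap V}W_q$. Each $W_q$ is a finite scheme of length $6$ unless $q$ is an Eckardt point, in which case $W_q=E_q$. This gives a) and b) simultaneously; note that the hypothesis $s\in\Sigma$ plays no role in the finiteness statement.

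Your proposal has the right skeleton (every positive-dimensional component of $D_s$ should consist of lines through a single $q\in l_s$, and then $q$ must be Eckardt), but your justification of that key rigidity step is incorrect. You claim that if the incidence point $l_t\cap l_s$ varies along $l_s$ as $t$ moves in an irreducible curve $Y\subset D_s$, one would obtain a positive-dimensional family of lines of $V$ through each point of $l_s$, contradicting the length-$6$ bound on $W_p$. That implication is false: a one-parameter family of lines whose incidence points sweep $l_s$ yields at most finitely many lines through any individual point, so no contradiction with the length of $W_p$ arises. The genuine contradiction lies elsewhere: if the incidence points filled out infinitely many points of $l_s$, all of them would lie on $V$ (each lies on some $l_t\subset V$), forcing $l_s\subset V$, i.e. $s\in F$, contrary to hypothesis. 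Equivalently, $t\mapsto l_t\cap l_s$ is a map from $Y$ to the finite set $l_s\cap V$, hence constant on the irreducible $Y$. The appeals to $l_s$ being of second type, to the pencil of hyperplanes through $l_s$, and to ``tangency/Schubert incidence and a dimension count'' do not supply this step and are not needed; likewise the opening reduction via Corollary \ref{base_locus_r_in_F} (no $C_r$ lies in $D_s$) is correct but contributes nothing, since it excludes only curves of that special form. Once the rigidity step is replaced by the finiteness of $l_s\cap V$, the remainder of your argument (the Eckardt dichotomy for $W_q$ and the inclusion $E_{p_i}\subset D_s$ when $p_i\in l_s$ is Eckardt) is fine.
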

 
\section{The method of adjoint forms on  the Fano surface}
The main ingredient  which   we  employ    to compute the infinitesimal invariant of  our normal function 
is the  tool of the {\em adjoint form} due to \cite{PiZu}. We begin  this section by recalling the relevant definitions and the basic  results.  Next  we derive a sharp result  on the properties of the  adjoint form   in the setting of the Fano surface, which  will turn out to be of convenient use in later developments.

\subsection{The adjoint form and the adjoint Theorem}
Let $X$ be a projective smooth surface and let
\begin{equation*}
 X_{\varepsilon}\lra Spec \, \mathbb C[\varepsilon]/(\varepsilon^2)
\end{equation*}
be an infinitesimal deformation of the variety $X$ with class $\xi \in H^1(X,T_X)$. 
The coboundary map $\partial_{\xi}: H^0(X,\Omega ^1_X)\lra H^1(X,\cO_X)$ attached to the exact sequence
\begin{equation}\label{exact_seq_defo}
0\lra \cO_X \lra \Omega ^1_{ X_{\varepsilon}\vert X}\lra \Omega^1_X \lra 0,
\end{equation} 
corresponds to cupping with $\xi$. Therefore a global form $\eta \in H^0(X,\Omega^1_X)$ lifts to
$\tilde {\eta }\in H^0(X, \Omega ^1_{ X_{\varepsilon}\vert X})$ if and only if
 $\partial _{\xi }(\eta)=\xi \cup \eta=0$. 

Assume that there exist $3$ global $1$-forms on $X$, $\eta_1,\eta_2,\eta_3$ such that 
$\xi \cup \eta_i=0$ for all $i$ and choose
$\tilde {\eta }_i$ some liftings to $X_{\varepsilon}$. Put $W=\langle \eta_1,\eta_2,\eta_3\rangle 
\subset H^0(X,\Omega^1_X)$.

Then  $\tilde {\eta}_1 \wedge \tilde {\eta}_2 \wedge \tilde {\eta}_3$ belongs to
 $ \Lambda ^3 H^0(X,  \Omega^1_{ X_{\varepsilon}\vert X})$. By
 taking determinants in the exact sequence (\ref{exact_seq_defo}), one gets that 
$\Lambda ^3 \Omega^1_{ X_{\varepsilon}\vert X}\cong \omega_X$ and therefore
we can define $\omega _{\eta_1,\eta_2,\eta_3,\xi}$ to be the image of 
$\tilde {\eta}_1 \wedge \tilde {\eta}_2 \wedge \tilde {\eta}_3$ by the natural map:
\begin{equation*}
\Lambda ^3H^0(X,\Omega^1_{X_{\varepsilon}}) \lra H^0(X,\Lambda ^3\Omega ^1_{X_{\varepsilon}}) \cong 
 H^0(X, \omega_X).
\end{equation*}
We say that this is an {\em adjoint form} attached to $\eta_i$ and $\xi$. 
It is not hard to see that this form is well-defined up to 
adding elements of $W^2:= \psi (\Lambda ^2W)\subset H^0(X,\omega_X)$, where
\begin{equation*}
 \psi:\Lambda^2 H^0(X,\Omega ^1_X) \lra H^0(X, \omega_X).
\end{equation*}
So we can consider the {\em adjoint class}
\begin{equation*}
 [\omega _{\eta_1,\eta_2,\eta_3,\xi}] \in H^0(X,\omega_X)/W^2.
\end{equation*}
Finally let $B$ be the fixed base  divisor of the linear system $\vert \,W^2\, \vert \subset 
\vert \,\omega_X \,\vert$ and let  $Z$ be  the scheme which is the base locus of the moving part of the
 system. We recall  the basic result on the adjoint class:
\begin{thm}\label{adjoint_thm}(\cite{PiZu}, 1.5.1. and 3.1.5).
 Assume that $W^2\ne 0$. Then:
\begin{enumerate}
 \item [a)] $[\omega _{\eta_1,\eta_2,\eta_3,\xi}]\in j (H^0(X,\omega_X (-B)\otimes_{ \cO_X}{\mathcal I}_{Z}))/W^2$
 where
$$ j: H^0(X,\omega_X (-B)\otimes_{ \cO_X}{\mathcal I}_{Z  } ) \to  H^0(X,\omega_X)$$
is the natural inclusion.
\item [b)] 
If $ [\omega _{\eta_1,\eta_2,\eta_3,\xi}]=0$, then $\xi$ belongs to the kernel of
\begin{equation*}
 H^1(X,T_X)\lra H^1(X,T_X(B)).
\end{equation*}
\end{enumerate}
\end{thm}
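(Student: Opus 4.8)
\emph{Proof strategy for Theorem \ref{adjoint_thm}.}

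The plan is to make everything explicit via \v{C}ech cohomology. First I would fix an affine cover $\{U_\alpha\}$ of $X$ with local coordinates $(x_\alpha,y_\alpha)$ chosen so that the deformation is trivialised on each chart, i.e. $\Omega^1_{X_\varepsilon\vert X}\vert_{U_\alpha}=\cO_{U_\alpha}dx_\alpha\oplus\cO_{U_\alpha}dy_\alpha\oplus\cO_{U_\alpha}d\varepsilon$, with $\cO_X\cdot d\varepsilon$ the subsheaf in \eqref{exact_seq_defo}, and represent $\xi$ by a $1$-cocycle of vector fields $\theta_{\alpha\beta}\in H^0(U_{\alpha\beta},T_X)$. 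The naive local lifts $\hat\eta_i^\alpha$ of $\eta_i$ (the same coordinate expression on each chart) fail to glue by the \v{C}ech coboundary $\{\iota_{\theta_{\alpha\beta}}\eta_i\}$, which represents $\xi\cup\eta_i=0$; so one may choose $f_i^\alpha\in\cO(U_\alpha)$ with $\iota_{\theta_{\alpha\beta}}\eta_i=f_i^\beta-f_i^\alpha$ and set $\tilde\eta_i\vert_{U_\alpha}=\hat\eta_i^\alpha-f_i^\alpha\,d\varepsilon$, a global lift. Any other lift differs by adding constants (since $H^0(X,\cO_X)=\bC$), which changes the resulting adjoint form exactly by the corresponding element of $W^2$; this is the source of the indeterminacy.

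Next I would simply expand $\tilde\eta_1\wedge\tilde\eta_2\wedge\tilde\eta_3$: the term without $d\varepsilon$ vanishes, being a wedge of three $1$-forms on a surface, and using that the determinant isomorphism $\Lambda^3\Omega^1_{X_\varepsilon\vert X}\cong\omega_X$ sends $d\varepsilon\wedge dx_\alpha\wedge dy_\alpha$ to $dx_\alpha\wedge dy_\alpha$ one obtains the key local identity
\[
\omega_{\eta_1,\eta_2,\eta_3,\xi}\vert_{U_\alpha}= f_1^\alpha\,\eta_2\wedge\eta_3- f_2^\alpha\,\eta_1\wedge\eta_3+ f_3^\alpha\,\eta_1\wedge\eta_2
\]
(up to an overall sign fixed by conventions). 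For part a), write $\eta_i\wedge\eta_j=s_B\,\tau_{ij}$ with $s_B\in H^0(\cO_X(B))$ cutting out the fixed divisor $B$ and $\tau_{ij}\in H^0(\omega_X(-B))$ the moving part, whose common zero scheme (necessarily of codimension $\ge 2$, hence finite) is $Z$. The identity then shows that $\omega_{\eta_1,\eta_2,\eta_3,\xi}$ is globally divisible by $s_B$ and that the quotient lies locally in the ideal $(\tau_{12},\tau_{13},\tau_{23})=\cI_Z$, hence $\omega_{\eta_1,\eta_2,\eta_3,\xi}\in j\big(H^0(X,\omega_X(-B)\otimes\cI_Z)\big)$; and since $W^2=s_B\langle\tau_{12},\tau_{13},\tau_{23}\rangle$ already lies in that subspace, the quotient statement is meaningful.

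For part b), after adjusting the liftings by constants I may assume that $[\omega_{\eta_1,\eta_2,\eta_3,\xi}]=0$ forces $\omega_{\eta_1,\eta_2,\eta_3,\xi}=0$ on the nose. On $X\smallsetminus V(\eta_1\wedge\eta_2)$ the pair $(\tilde\eta_1,\tilde\eta_2)$ extends to a frame of $\Omega^1_{X_\varepsilon\vert X}$, and writing $\tilde\eta_3=\lambda_1\tilde\eta_1+\lambda_2\tilde\eta_2+\mu\,d\varepsilon$ with $\lambda_1=(\eta_3\wedge\eta_2)/(\eta_1\wedge\eta_2)$, $\lambda_2=(\eta_1\wedge\eta_3)/(\eta_1\wedge\eta_2)$ regular there, the relation $\omega_{\eta_1,\eta_2,\eta_3,\xi}=\mu\,(\eta_1\wedge\eta_2)$ yields $\mu=0$. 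Hence the coordinate splittings $\sigma_\alpha$ of \eqref{exact_seq_defo} (i.e. $\eta\mapsto\hat\eta^\alpha$) and the splitting $\sigma$ over $X\smallsetminus V(\eta_1\wedge\eta_2)$ with $\sigma(\eta_i)=\tilde\eta_i$ differ by rational vector fields $\rho_\alpha:=\sigma-\sigma_\alpha$ with $\rho_\alpha(\eta_i)=-f_i^\alpha$ regular on $U_\alpha$ for $i=1,2,3$; since any two of $\eta_1,\eta_2,\eta_3$ span $\Omega^1_X$ off the respective wedge locus, the same construction near any point of $B^c$ (using whichever pair is nonzero there) shows $\rho_\alpha$ is regular on $U_\alpha\smallsetminus(B\cup Z)$, has poles bounded by $B$, and extends across the finite set $Z$, so $\rho_\alpha\in H^0(U_\alpha,T_X(B))$. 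Then $\theta_{\alpha\beta}=\pm(\rho_\alpha-\rho_\beta)$ exhibits $\xi$ as going to $0$ in $H^1(X,T_X(B))$. The hard part is exactly this last step: pinning down the polar divisor of the $\rho_\alpha$ to be contained in $B$ with the correct multiplicities, which is where one genuinely uses that $B$ is the fixed divisor of $|W^2|$ and that the residual base locus $Z$ has codimension two; the sign and convention bookkeeping in the \v{C}ech computation is the other delicate point.
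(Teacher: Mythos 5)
This theorem is not proved in the paper at all: it is imported verbatim from Pirola--Zucconi, and the paper's ``proof'' is the citation to \cite{PiZu} (1.5.1 and 3.1.5). Your \v{C}ech argument is, in substance, the standard proof from that source, and it is sound: the local identity $\omega_{\eta_1,\eta_2,\eta_3,\xi}=f_1^\alpha\,\eta_2\wedge\eta_3-f_2^\alpha\,\eta_1\wedge\eta_3+f_3^\alpha\,\eta_1\wedge\eta_2$ does glue (the discrepancy on overlaps is $\iota_{\theta_{\alpha\beta}}(\eta_1\wedge\eta_2\wedge\eta_3)=0$ on a surface, a check worth writing out), and it immediately gives a), since the subsheaf of $\omega_X$ generated by the $\eta_i\wedge\eta_j$ is contained in $\omega_X(-B)\otimes\cI_Z$. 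For b), your splitting $\sigma$ defined by the pair $(\eta_1,\eta_2)$ agrees, thanks to $\mu=0$, with the splittings defined by the other two pairs, so it is regular on the complement of the full base scheme $B\cup Z$; the pole bound along $B$ comes out of Cramer's rule, because near a general point of a component of $B$ one can use the pair whose moving part $\tau_{ij}$ does not vanish there, so the denominator is the local equation of $\operatorname{div}(\eta_i\wedge\eta_j)$, vanishing exactly to the multiplicity of $B$; the remaining bad locus (points of $Z$, singular points of $B$) has codimension two, and $\rho_\alpha$ extends across it because $T_X(B)$ is locally free. With those two checks made explicit, $\theta_{\alpha\beta}=\rho_\beta-\rho_\alpha$ exhibits the image of $\xi$ in $H^1(X,T_X(B))$ as a coboundary, which is exactly the statement.
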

Part b) is a tool to check  non-vanishing. Indeed, if $\vert \,  W^2 \, \vert$ is a 
non empty linear system without base divisor, then $ [\omega _{\eta_1,\eta_2,\eta_3,\xi}]\ne 0$.  

It is possible and sometimes useful to choose  a special representative of the adjoint class; this can be done by the requirement that it is orthogonal to $W^2.$  
\begin{definition} The standard representative  of the class     $[\omega _{\eta_1,\eta_2,\eta_3,\xi}]$   is the only form in the given class which is orthogonal to $W^2$.  We   write it   simply    $\omega_{\eta_1,\eta_2,\eta_3,\xi},$  with the understanding   that  it satisfies the property:

\begin{equation} \label{stand}
\int_X\omega_{\eta_1,\eta_2,\eta_3,\xi}\wedge \overline{\beta_1\wedge \beta_2}=0
\end{equation}
for all  $\beta_1$ and $\beta_2$ are in $W$. Clearly $[\omega_{\eta_1,\eta_2,\eta_3,\xi}]=0\iff \omega_{\eta_1,\eta_2,\eta_3,\xi}=0$
\end{definition}

\subsection{The adjoint class for the special lines.}  

We explain how the preceding machinery works in case of the Fano surface $F$ and for a chosen deformation $\xi $ 
associated with a line $r$  belonging to $\Sigma$.  
In our situation the Hodge pieces  $H^{a,b}(F)$ of interest  to us can be written in terms  of the Jacobian ring (section \S 3), 
in particular the space of canonical forms on $F$  is  identified with $\wedge ^ 2 R^1.$ 
Having chosen  $r\in \Sigma $   we consider next a basis $\eta_1,\eta_2,\eta_3 $ of $W(r)$. The  element  $\xi \in R^3$,
 unique up to constant, is an infinitesimal variation  with kernel  $K_1(\xi)=W(r)$. We set 
\begin{equation*}
  W(r)^2:=\langle \eta_1\wedge \eta_2,\eta_2\wedge \eta_3,\eta_3\wedge \eta_1 \rangle \subset \wedge ^ 2 R^1\, \, .
\end{equation*}
 Observe that the base locus of the linear system $\vert W(r)^2\vert$ is the sublocus $D_r$ defined in
(\ref{def_base_locus}).
The goal of this section is to understand for which lines  $r\in \Sigma $ the adjoint class
$[\omega _{\eta_1,\eta_2,\eta_3,\xi}]$ is zero. In this case we say that ``the class vanishes on the line''. The vanishing
does not depend on the   choice of the basis we have just made.

\begin{rem}\label{adjoint_as_section}  Consider the universal rank $3$ subbundle on  $Grass(3, R^1 ) =: \bG' \simeq \bG $,
we call $\cU$ its   restriction to $\Sigma $  and then we lift it to $\Gamma$: $\cU_{\Gamma}:=
\pi_2^*(\cU).$
There is   a natural map 
of sheaves on $\Gamma $:
\begin{equation*}
 \Lambda ^2 \mathcal U_{\Gamma } \lra  H^0(F,\omega_F)\otimes \cO_{\Gamma }=
\Lambda^2R^1 \otimes \cO_{\Gamma}. 
\end{equation*}
Let $\cE$ be the cokernel of this map. The adjoint class map  amounts to an  arrow:
\begin{equation*}
 \alpha: \Lambda ^3\cU_{\Gamma } \otimes \pi _1^*  \cO _{\bP (R^3)}(-1) \lra \cE.
\end{equation*}
More precisely, the adjoint class is a global section of the sheaf $Im(\alpha)$ supported on $\Gamma $. We use the bijection $\Gamma \lra \Sigma$ to think of the preceding construction 
as taking place on $\Sigma$. We deal next with the vanishing locus of $\alpha $. \end{rem}

The main result of this section is:
\begin{thm}\label{dddouble_lines}
The adjoint class map  vanishes exactly on  $\Sigma \cap F$, i.e. on the  locus of double lines on $V$.
\end{thm}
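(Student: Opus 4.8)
The plan is to analyze the adjoint class $[\omega_{\eta_1,\eta_2,\eta_3,\xi}]$ by combining Theorem \ref{adjoint_thm} with the explicit description of the base locus $|W(r)^2|$ on $F$. The key observation is that the base locus of the linear system $|W(r)^2| \subset |\omega_F|$ is precisely the scheme $D_r$ of Definition \ref{def_base_locus}, so the behaviour of the adjoint class is governed by whether $D_r$ has a fixed divisorial part, and by the residual $0$-dimensional scheme $Z$. I would split the argument according to whether $r \in F$ or $r \in \Sigma \setminus F$.

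\emph{Case $r \notin F$.} Here I would invoke Proposition \ref{finite_base_locus}: for $s \in \Sigma \setminus F$ whose line $l_s$ meets no Eckardt point, $D_s$ is finite, so $|W(r)^2|$ has no fixed divisor ($B = 0$) and nonempty moving part. Then part b) of Theorem \ref{adjoint_thm} (in its contrapositive, cf.\ the remark following the theorem: a nonempty base-divisor-free system forces non-vanishing) gives $[\omega_{\eta_1,\eta_2,\eta_3,\xi}] \ne 0$. The case where $l_s$ passes through Eckardt points needs a small extra argument: by Proposition \ref{finite_base_locus}(b) the fixed divisor is $B = \sum E_{p_i}$, and I would need to check that $[\omega_{\eta_1,\eta_2,\eta_3,\xi}]$ is still nonzero, i.e.\ that $\xi$ does not lie in the kernel of $H^1(F,T_F) \to H^1(F,T_F(B))$. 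Since Eckardt points do not occur on a general cubic threefold, for the main applications one could even restrict to that case; but to get the clean statement as phrased I would argue that $H^0(F,\omega_F(-B) \otimes \mathcal I_Z) \subsetneq H^0(F,\omega_F)$ and that the adjoint form genuinely involves the $z_4$-direction transverse to the $\pi_p$'s, so it is a nonzero section of $\omega_F(-B)$.

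\emph{Case $r \in F$.} Now $r$ is a double line, $t = r \in C_r$, and I want to show the adjoint class vanishes. The idea is that when $l_r \subset V$, every line $l_t$ of $V$ meeting $l_r$ (that is, every $t \in C_r = D_r \cap F$) contributes, so $D_r$ contains the curve $C_r$; by Corollary \ref{base_locus_r_in_F} and the Proposition preceding Definition \ref{def_base_locus}, $C_r \subset Z(\pi)$ for all $2$-planes $\pi \supset l_r$, hence $C_r$ is a fixed component of $|W(r)^2|$. So the fixed divisor $B$ of $|W(r)^2|$ contains $C_r$, and by Theorem \ref{adjoint_thm}(a) the class lies in $j(H^0(F,\omega_F(-B)\otimes \mathcal I_Z))/W(r)^2$. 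Using Fano's relation $K_F \equiv 3C_t$ and $C_t^2 = 5$ (the Theorem of Fano in the excerpt), together with the fact that $W(r)^2$ — being spanned by three sections of $\omega_F$ all vanishing on $C_r$ — already sits inside $H^0(F,\omega_F(-C_r))$, I would show by a dimension/degree count that $H^0(F,\omega_F(-B)) $ is spanned by $W(r)^2$ modulo the further base conditions imposed by $Z$, forcing $[\omega_{\eta_1,\eta_2,\eta_3,\xi}] = 0$. Concretely: $\omega_F(-C_r)$ has few sections beyond the Schubert forms through $l_r$, and the standard representative (orthogonal to $W(r)^2$, equation \eqref{stand}) must then be $0$.

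\textbf{Main obstacle.} The delicate point is the quantitative part of the $r \in F$ case: showing that once one divides by the fixed divisor $B \supset C_r$ and imposes the residual scheme $Z$, there is literally nothing left, i.e.\ $H^0(F, \omega_F(-B) \otimes \mathcal I_Z)$ is contained in the image of $W(r)^2$. This requires a genuine understanding of $h^0(\omega_F(-C_r))$ and of the residual conditions, via the geometry of $C_r$ on $F$ (its arithmetic genus $11$, self-intersection $5$, and the canonical system description $K_F = C_{t_1}+C_{t_2}+C_{t_3}$ for triangles). I would expect to handle this by picking a triangle $l_r, l_{r_1}, l_{r_2}$ and writing the relevant canonical forms explicitly as $\Omega_\pi$'s, reducing the vanishing to Corollary \ref{base_locus_r_in_F} and a transversality check. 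The other, milder obstacle is the bookkeeping around Eckardt points in the $r \notin F$ direction, which is not essential for the generic statement but is needed for the theorem exactly as stated.
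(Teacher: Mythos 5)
Your overall strategy coincides with the paper's: split into $r\in F$ and $r\in\Sigma\setminus F$ (the latter subdivided according to Eckardt points), identify the base locus of $|W(r)^2|$ with $D_r$, and apply Theorem \ref{adjoint_thm}. But at both places where you flag difficulties, the argument you sketch would not close the gap. For $r\in F$, the missing ingredient is precisely Tjurin's result, quoted in the paper as Proposition \ref{pending}: $h^0(F,\omega_F(-C_r))=3$ and $|\omega_F(-C_r)|$ has no base divisor. With it the proof is immediate: $C_r\subset D_r$ forces the adjoint form into $H^0(\omega_F(-C_r))$, which then equals the $3$-dimensional space $W(r)^2$, so the class is zero. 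Your proposed substitute, a ``dimension/degree count'' from $K_F\equiv 3C_t$ and $C_t^2=5$, only gives the Euler characteristic: $\chi(\omega_F(-C_r))=\chi(\cO_F)-\tfrac12\bigl(K_F\cdot C_r-C_r^2\bigr)=6-5=1$, and to extract $h^0=3$ one must control $h^1$ and $h^2$ of $\cO_F(C_r)$, which is exactly the finer analysis of $C_t$ on $F$ due to Tjurin; Riemann--Roch plus Fano's numbers alone cannot do it. So the ``main obstacle'' you name is a genuine missing step, not bookkeeping.

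In the Eckardt branch of the case $r\in\Sigma\setminus F$ there are two further problems. First, you take for granted that the fixed divisor is $B=\sum E_{p_i}$ with multiplicity one; Proposition \ref{finite_base_locus}(b) only says that $D_r$ contains the curves $E_{p_i}$, and the paper needs a separate lemma (an explicit tangent-plane computation at a point of $E$ in Pl\"ucker coordinates) to prove that $B$ is reduced and supported on the union of the $E_{p_i}$. Second, and more seriously, your fallback argument --- that the adjoint form ``is a nonzero section of $\omega_F(-B)$'' --- cannot prove nonvanishing of the adjoint \emph{class}: every element of $W(r)^2$ is also a section of $\omega_F(-B)$, since $B$ is by definition the fixed divisor of $|W(r)^2|$, and the class lives in the quotient modulo $W(r)^2$. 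The correct route is the one you state but do not carry out, namely injectivity of $H^1(F,T_F)\to H^1(F,T_F(B))$, which the paper proves by showing $H^0(E,T_F\otimes\cO_E(E))=0$ for each Eckardt curve: by the tangent bundle theorem $T_F|_E$ is pulled back from the plane parameterizing lines through the Eckardt point and splits as $\cO_E(-1)\oplus\cO_E$, while adjunction gives $\cO_E(E)\cong\cO_E(-1)$, so $T_F\otimes\cO_E(E)\cong\cO_E(-2)\oplus\cO_E(-1)$ has no sections on the elliptic curve $E$; then Theorem \ref{adjoint_thm}(b) applies. Finally, restricting to a general cubic (no Eckardt points) would not prove the theorem as stated, which concerns every smooth $V$, as you yourself note.
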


\begin{proof} 
 We divide the proof into three parts.
 
\noindent {\it First case:} We assume that $r\in F\cap \Sigma $. We denote  $\Omega_{ij}=\eta_i \wedge \eta_j$. This is the basis of $ W(r)^2$ given above. 
By Corollary (\ref{base_locus_r_in_F}), the three corresponding canonical divisors 
contain the curve $C_r$. Hence:
\begin{equation}
 W(r)^2 :=<\Omega_{12},\Omega_{23},\Omega_{31}>\subset H^0(F,\omega _F(-C_r)).
\end{equation}

Now we use the following result of Tjurin:

\begin{prop} (cf. [T], Corollary (2.2)). \label{pending}
 For any $r\in F$, $h^0(F,\omega _F(-C_r))=3$ and the linear system $\vert\omega_F(-C_r)\vert $ 
has no base divisor.
\end{prop}

Now   $C_r \subset D_r $, and then $\omega_{\eta_1,\eta_2,\eta_3,\xi}\in H^0(\omega_F-(C_r)).$ 
But $H^0(\omega_F(-C_r)) =  W(r)^2$ by the previous
proposition.  Therefore  
we have the requested  vanishing.

\noindent {\it Second case:} We assume that  $r  \in \Sigma -  F$  and we require that $ l_r \cap V$  
contains no  Eckardt points. By   Proposition (\ref{finite_base_locus})
the linear system $\vert W(r)^2 \vert$
has no base divisor and then  from  Theorem (\ref{adjoint_thm}), b) we see
that the   adjoint class  $  [\omega _{\eta_1,\eta_2,\eta_3,\xi}] $ is non trivial in this situation.

\noindent {\it Third case:} Finally we assume that   $r\in \Sigma -F$ and $l_r \cap V $ is a finite set, which  contains some Eckardt points, to be called  $p_i.$ Consider the elliptic curves  $E_i \subset F$, which parametrize the lines through our  points $p_i$.  Clearly   $E_i  \cap E_j = \emptyset $, because otherwise    $l_r \cap V  = l_r.$  We need the following result.

\begin{lem}
In this case  the base divisor $B$ of the  linear system $ W(r)^2 $
 is reduced and  supported on the union of the curves  $E_i$'s. 
\end{lem}
\begin{proof}
Consider a line  $l_r \not\subset  V$ which we assume to intersect  $V$ in an Eckardt point. Our aim here is to prove  the statement that   the   linear system $W(r)^2$
has reduced  base divisor along the Eckardt curve $E \subset F$.
In order to prove reducedness  it is enough to find a point $ s \in E$ where  the tangent plane $T_s(F)$  is not contained in one of the hyperplanes which cut on $F$ our linear system $W(r)^2.$
Those are   the Schubert   hyperplanes 
which give the  condition of non empty intersection with  a plane  containing  the line $l_r$.
If the plane is  $  x _1 = x_3 = 0,$  then 
the Schubert hyperplane has equation $p_{1,3 } = 0.$
Our    $V$  has   equation  $x_4 Q + K (x_1, x_2, x_3 ) = 0 $.
Up to a linear change of the coordinates, we may assume that on the plane $x_4 = 0 = x_0$ the line 
$x_2 = 0 $ is tangent to the cubic $K= 0$ at the point  $[1:0:0]$.
We choose our line $l_s$ to be $ (\ast,\ast,0,0,0 )$, then   $l_s$ is in $V$ and  it contains  the Eckard point $[1:0\dots:0].$  
By setting $p_{0,1} =1 $,    one can pass to  an affine chart in the Grassmannian,  using  the other   Pl\"ucker coordinates $p_{0,j}$  and   $p_{1,j}$, and $s$ is the origin.
In our situation  it  is a matter of routine  to check  that the tangent plane $T_s(F)$ is  given 
in the said  chart as the  coordinate   plane  $ (p_{0,3 }  , p_{1,3} )$, and therefore it is  not contained in the Schubert hyperplane. 
\end{proof}

The next two propositions  complete  the proof of our theorem. Consider 
\begin{equation*}
0 \to T_F  \to T_F(B) \to T_F \otimes \cO_B(B)\to 0 
\end{equation*}
so that  one has the exact sequence:
\begin{equation*}
 \dots \to H^0 (B,T_F\otimes \cO_B(B)) \to H^1(F,T_F)\to H^1(F,T_F(B)) \to \dots
\end{equation*}
\begin{prop} The arrow  $H^1(F,T_F) \to H^1(F,T_F(B))$ is injective, because   
\begin{equation*}
 H^0 (B, T_F \otimes \cO_B(B)) = 0.
\end{equation*}
\end{prop}

We remark  that  $H^0 (B, T_F \otimes \cO_B (  B   )) = \oplus  H^0 (E_i , T_F \otimes \cO_{E_i} (  E_i   ))$.
This is due to $B$ being  the disjoint union of the $E_i$. Then the proof of the proposition descends from:

\begin{prop} For any  Eckardt  curve $E \subset F$
\begin{equation*}
 H^0 (E, T_F \otimes \cO_E (  E   )) = 0 \, .
\end{equation*}
\begin{proof}  
We recall that $T_F$ is the restriction to $F$ of the universal rank $2$ sub-bundle on $\bG$.
Moreover $\cO_E(E) \simeq  \cO_E(-1),$ by the adjunction formula on $F$, since 
$\omega_F \simeq \cO_F (1).$
Our elliptic curve $E$ represents the lines passing through a fixed point $p$ contained in $V$, 
and therefore they are  contained in the tangent space $T_{p} (V) $. 
The lines through $p$ and contained in $T_{p} (V) $ are parameterized by any plane 
$\pi \subset T_{p} (V)$ not containing $p$. Over our plane, seen as a  subvariety of $\bG$, 
there is a bundle $\mathcal B$  which is  the restriction of the universal rank $2$ subbundle.  
This bundle $\mathcal B$ comes with a subbundle $ \mathcal L $ of rank $1$, corresponding to 
the geometric condition of passing through $p$, and further it has a second subbundle, $\mathcal G$, 
which gives the intersection of the line with the plane $\pi \subset T_{p} (V).$  In other words there is
a splitting:
$\mathcal L  \oplus \mathcal G \simeq \mathcal B$.
Remember that the universal subbundle  is in fact a subbundle of the  trivial bundle of rank $5$, 
and therefore  both line bundles come with a non trivial  morphism to the trivial bundle. Since 
$det(\mathcal B) = \cO_{\mathbb P^2} (-1)$, we must have   $\mathcal G \simeq \cO_{\mathbb P^2} $  and 
$\mathcal L=\cO_{\mathbb P^2} (-1)$. In conclusion:  $ \cO(E, T_F \otimes \cO_E(E)) \simeq 
 \cO_{E}(-2) \oplus   \cO_{E}(-1) $,
hence, $H^0 (E, T_F \otimes \cO_E ( E)) = 0 $.
\end{proof}
\end{prop}

The proof of our  Theorem is thus completed.
\end{proof}


\subsection{  
Explicit computation of the adjoint form for a transverse  line}\label{explicit_adjoint} In this section we compute the adjoint form  when the first order deformation  $\xi $ is associated with a special line
$l_r$ which  intersects $V$ in a least two different points.  We begin with a cohomological computation, in the proof of which we need to appeal to a simple fact: the scheme $W_p \subset F $   which parameterizes  the lines through a   point $p \in V$ is in fact a planar scheme but it is  not supported on any line in $ \mathbb P^9$.
\begin{lem}
 For any line $l_r$ as above one
 has:
\begin{equation*}
 \dim H^0(F,\omega _F \otimes \cI_{D_r})=4.
\end{equation*}
\end{lem}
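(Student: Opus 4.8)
The plan is to compute $h^0(F,\omega_F\otimes\cI_{D_r})$ by relating it, via standard exact sequences, to the length of the scheme $D_r$ and to the dimension of the linear system $\vert W(r)^2\vert$ already studied. Recall that $\omega_F\simeq\cO_F(1)$, so $H^0(F,\omega_F)\cong\wedge^2R^1$ has dimension $10$; inside it, $W(r)^2$ is the $3$-dimensional subspace $\langle\Omega_{12},\Omega_{23},\Omega_{31}\rangle$ of Schubert forms attached to the planes through $l_r$, and by Definition \ref{def_base_locus} the scheme $D_r$ is exactly the base locus of $\vert W(r)^2\vert$. The inclusion $W(r)^2\subset H^0(F,\omega_F\otimes\cI_{D_r})$ gives the lower bound $h^0(F,\omega_F\otimes\cI_{D_r})\ge 3$, so the content of the lemma is the reverse inequality together with the extra $1$: there is exactly one more section of $\omega_F$ vanishing on $D_r$ than the obvious three.

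First I would pin down $D_r$ as a subscheme of $\bG\subset\bP^9$. For $r\in\Sigma-F$ with $l_r$ meeting $V$ in at least two distinct points, $l_r$ is not a double line, and by Proposition \ref{finite_base_locus} (case a), if $l_r$ contains no Eckardt point) $D_r$ is finite. Its length is forced by intersection theory on $\bG$: $D_r$ is cut on $F$ by the net of Schubert hyperplanes through $l_r$, i.e.\ by two general members of $\vert W(r)^2\vert$, whose product restricted to $F$ has degree $\deg F=K_F^2$. Since $K_F\equiv 3C_t$ and $C_t^2=5$ we get $K_F^2=45$, but here the net has a base locus, so $\deg D_r$ is strictly less; the precise number will come out of a local computation at the points of $l_r\cap V$, where one uses the hinted fact that the length-$6$ scheme $W_p$ of lines through a point $p\in V$ is planar but not contained in any line of $\bP^9$. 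This non-degeneracy is exactly what cuts down the base-locus contribution. I expect $D_r$ to have length $45-2\cdot 6=33$? — more carefully, the right count is the one making $h^1$ behave, and I would instead argue directly with cohomology rather than pin down the number by hand.

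The cleaner route is the long exact sequence of $0\to\omega_F\otimes\cI_{D_r}\to\omega_F\to\omega_F\otimes\cO_{D_r}\to 0$, i.e.
\begin{equation*}
0\to H^0(F,\omega_F\otimes\cI_{D_r})\to H^0(F,\omega_F)\to H^0(D_r,\omega_F\otimes\cO_{D_r})\to H^1(F,\omega_F\otimes\cI_{D_r})\to 0,
\end{equation*}
using $H^1(F,\omega_F)=0$ (Kodaira vanishing, $\omega_F$ ample). So $h^0(F,\omega_F\otimes\cI_{D_r})=10-\mathrm{len}(D_r)+h^1(F,\omega_F\otimes\cI_{D_r})$, and I want this to equal $4$. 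Equivalently, the evaluation map $H^0(F,\omega_F)\to H^0(D_r,\omega_F\otimes\cO_{D_r})$ has rank $6$: its image is the set of Schubert forms (restrictions of hyperplanes of $\bP^9$) on $D_r\subset\bP^9$. Now $D_r$ is supported on the union of the length-$6$ planar schemes $W_{p}$, $p\in l_r\cap V$ (for general such $r$ there are two such points), plus a finite residual set; the key claim is that the hyperplanes of $\bP^9$ cut a linear system of (projective) dimension exactly $5$ on $D_r$, i.e.\ $D_r$ spans a $\bP^5$. This is where the hinted property enters decisively: each $W_p$ is planar and nondegenerate in its span, and one must check the two spans (and the residual points) together span exactly a $\bP^5$ — not more, because of the planarity, and not less.

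The main obstacle, then, is precisely this linear-algebra/projective-geometry statement about the span of $D_r$ in $\bP^9$: showing that the codimension-$3$ constraint "planar at each $W_p$" combines across the two points of $l_r\cap V$ to force $\dim\langle D_r\rangle=5$ rather than $6$ or $7$. I would handle it by an explicit coordinate computation in a Plücker affine chart, choosing coordinates adapted to $l_r$ and to the two points $p_1,p_2\in l_r\cap V$, writing down the tangent-cone/$6$-jet of $F$ at each $W_{p_i}$ (using the cubic's equation restricted to the tangent hyperplane $T_{p_i}V$ as in the Eckardt-point normal form $K(z_1,z_2,z_3)+z_4Q=0$), and reading off that the Schubert hyperplanes $p_{i,j}$ restrict to a $6$-dimensional space of functions on $D_r$. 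Granting that, the sequence above gives $h^0=10-\mathrm{len}(D_r)+h^1=4$ directly once one also notes $h^1(F,\omega_F\otimes\cI_{D_r})$ is computed by the same rank count; the Eckardt-point sub-case (b) of Proposition \ref{finite_base_locus} is then dealt with separately, the extra cubic curves $E_{p_i}$ in $D_r$ being absorbed by the earlier base-divisor analysis. I would present the general transverse case in full and remark that the special configurations are limits, or argue semicontinuity of $h^0(\omega_F\otimes\cI_{D_r})$ to reduce to it.
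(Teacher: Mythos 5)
Your reduction of the lemma to the statement that $D_r$ spans exactly a $\bP^5$ in $\bP^9$ is sound (indeed $h^0(F,\omega_F\otimes\cI_{D_r})=10-\rk(\mathrm{ev})$, and the canonical sections of $F$ are the hyperplane sections in the Pl\"ucker space), and your sketch of the direction ``the span is at least a $\bP^5$'' is essentially the paper's argument: a hyperplane containing $D_r$ must contain, for each $p\in l_r\cap V$, the plane $\pi_p\subset\bG$ spanned by the planar but non-collinear scheme $W_p$, and two such planes are disjoint, hence span a $\bP^5$, so at most a $4$-dimensional space of linear forms vanishes on $D_r$. Two slips along the way: $H^1(F,\omega_F)\cong H^1(F,\cO_F)^*$ has dimension $5$, not $0$ (Kodaira vanishing does not apply to $\omega_F$ itself), so your four-term exact sequence and the formula $h^0=10-\mathrm{length}(D_r)+h^1$ are wrong as written, although the rank reformulation you then adopt is correct; and a generic special line not in $F$ meets $V$ in three points, not two.

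The genuine gap is the other inequality, $h^0\ge 4$, i.e.\ that the span of $D_r$ is \emph{at most} a $\bP^5$. Your justification ``not more, because of the planarity'' does not work: $D_r$ sits inside the union of the schemes $W_p$ over the (generically three) points $p\in l_r\cap V$, and three pairwise disjoint planes plus a residual finite set can span up to a $\bP^8$; planarity of each $W_p$ alone bounds nothing. The missing input is exactly the hypothesis $r\in\Sigma$, which your sketch never uses: since $l_r$ is of second type, the dual map carries it to a line, so the tangent hyperplanes $T_pV$ at the points of $l_r\cap V$ belong to a pencil and share a $2$-plane $\pi=\bigcap_p T_pV$. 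Every line of $V$ through $p$ lies in $T_pV\supset\pi$ and hence meets $\pi$, so the Schubert canonical divisor $K_\pi$ contains $D_r$; and $l_r\not\subset V$ forces $l_r\not\subset\pi$, so this section does not lie in $W(r)^2$, giving the fourth independent section (equivalently, capping the span of $D_r$ at a $\bP^5$). The ``explicit coordinate computation'' you defer to is precisely where this second-type geometry would have to enter; without it the argument cannot distinguish lines of $\Sigma$ from arbitrary lines meeting $V$ in three points, for which there is no reason for the conclusion to hold.
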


\begin{proof}
Let $A$ and $B$ be two points in $ V \cap  l_r$.  The   lines through $A$ in $V$ are   contained in     the projective tangent space $T_A(V)$. 
Similarly for $B$ and the remaing point $C$ (if any) on  $V\cap l_r$. 
Since $r \in \Sigma $, then the image of $l_r$ under the dual map is a line (see Remark (\ref{dual})), 
thus we have 
that  $T_A(V)  \cap T_B(V) \cap T_C(V)$ is a $2$-plane $\pi$, the base of the dual pencil.  The lines in $V$  through $A$ 
intersect $\pi$, and the same is true for the lines passing through  $B$ and for  $C$.
Hence the Schubert hyperplane associated with the base $\pi$ provides a canonical divisor $K_{\pi}$ which supports our scheme $D_r.$  This is true for a general line $r$ of second type, and then the same holds for every $r \in \Sigma$, by continuity.  Now  the assumption $l_r  \not \subset  V$ gives  that $r \not \subset  \pi$, and therefore      $K_{\pi}  \not \in \vert W(r)^2 \vert $, which is the system  cut on $F$  
 by the Schubert divisors  associated with the 2-planes containing $l_r$.   
Therefore $h^0(F,\omega _F \otimes \cI_{D_r})\ge 4$, for     $r \not \in F$ .  

On the other hand consider the
 plane $\pi_A\subset \bG$ parameterizing the lines through $A$ and  contained in the tangent  hyperplane $T_A(V)$.
We have recalled  above     that  the scheme $W_p $ parameterizing the  lines through $p$ is a subscheme of  $\pi_A$, but it is not supported on any line of $\pi_A$. A hyperplane of $\bP ^9$ supporting 
$D_r$ must then contain $\pi_A$ and for the same reason contains  $\pi_B$. Now $\pi_A \cap \pi_B =\emptyset $ and therefore  the  linear space they span is of  dimension
$5$. This shows that $h^0(F,\omega _F \otimes \cI_{D_r})\le 4$ hence  the  proof is completed.
\end{proof}

The useful outcome is then the property that  the adjoint class takes value  in a space of  dimension $1\, $ which is
$ H^0(F,\omega_F \otimes \cI_{D_r})/W(r)^2.$

We can write down explicitily a generator of this space in terms of a basis in $R^1$:
let $l_r$ be the line  $z_0=z_1=z_2=0$ and $A=(0:0:0:1:0)$, $B=(0:0:0:0:1)$, i.e.
$W(r)=\langle z_0,z_1,z_2 \rangle \subset R^1=S^1$.
 We can assume that 
$T_A(V),T_B(V)$ are given by $z_4=0$ and $z_3=0$ respectively. Identifying
$R^1$ with the space of $1$ forms on $F$ we put $\omega_{i+1}$ for the form corresponding to 
the hyperplane $z_i=0$. Then 
the canonical divisors attached to the planes $z_0=z_1=0$, $z_0=z_2=0$ and $z_1=z_2=0$ containing
$l_r$  generate $W(r)^2$ and correspond to $\omega_1\wedge \omega _2, \omega_1\wedge \omega _3,\omega_2\wedge \omega _3$.
On the other hand
the lines intersecting
$T_A(V)\cap T_B(V): z_3=z_4=0$ give the $(2,0)$ form $\omega_4\wedge \omega _5$.
We get that:
\begin{prop}\label{what} 
For this deformation $\xi$ the adjunction class  $[\omega _{\omega_1,\omega_2,\omega_3,\xi}] $ 
 is represented in
\begin{equation*}
 H^0(F,\omega_F \otimes \cI_{D_r})/W(r)^2
\end{equation*}
 by $\rho \, \omega_4\wedge \omega_5$, $\rho \ne 0$.   \end{prop}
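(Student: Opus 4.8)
The plan is to deduce the statement from the one–dimensionality of the target space established in the preceding Lemma, combined with Theorems~\ref{adjoint_thm} and~\ref{dddouble_lines}. Recall that $D_r$ is the base locus of the linear system $|W(r)^2|$, so that $W(r)^2\subset H^0(F,\omega_F\otimes\cI_{D_r})$; since $\dim H^0(F,\omega_F\otimes\cI_{D_r})=4$ while $\dim W(r)^2=3$, the quotient $H^0(F,\omega_F\otimes\cI_{D_r})/W(r)^2$ is a line. It is then enough to identify a generator of this line coming from a Schubert form, and to check that the adjoint class is nonzero.

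For the generator I would put $\pi=\{z_3=z_4=0\}=T_A(V)\cap T_B(V)$, so that $\omega_4\wedge\omega_5=\Omega_\pi$ in the notation of Definition~\ref{schubform}. The computation already made in the proof of the preceding Lemma identifies $\pi$ with the base plane of the dual pencil attached to the special line $l_r$ (Remark~\ref{dual}) and shows that the Schubert divisor $Z(\pi)$ supports $D_r$; hence $\omega_4\wedge\omega_5=\Omega_\pi\in H^0(F,\omega_F\otimes\cI_{D_r})$. On the other hand $\omega_4\wedge\omega_5\notin W(r)^2=\langle\omega_1\wedge\omega_2,\omega_1\wedge\omega_3,\omega_2\wedge\omega_3\rangle$, because $\omega_1,\dots,\omega_5$ is a basis of $R^1$ and so these four bivectors are linearly independent in $\Lambda^2R^1\cong H^0(F,\omega_F)$. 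Therefore $\omega_4\wedge\omega_5$ spans the line $H^0(F,\omega_F\otimes\cI_{D_r})/W(r)^2$.

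It then remains to place the adjoint class inside this line and to see that it does not vanish. By Theorem~\ref{adjoint_thm}~a), with $B$ the fixed base divisor and $Z$ the base locus of the moving part of $|W(r)^2|$, the class $[\omega_{\omega_1,\omega_2,\omega_3,\xi}]$ lies in $H^0(F,\omega_F(-B)\otimes\cI_Z)/W(r)^2$; and since a form of $\omega_F$ vanishing along $B$ and on $Z$ is exactly a form vanishing on the base locus $D_r$ of $|W(r)^2|$, this space equals $H^0(F,\omega_F\otimes\cI_{D_r})/W(r)^2$, our line. Now $l_r$ meets $V$ in finitely many points, so $r\in\Sigma-F$, and Theorem~\ref{dddouble_lines} states precisely that the adjoint class map is nonzero at such a line. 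Consequently $[\omega_{\omega_1,\omega_2,\omega_3,\xi}]=\rho\,\omega_4\wedge\omega_5$ with $\rho\neq0$, as claimed.

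I expect no genuine obstacle here: the single step with real geometric content is the claim that $\omega_4\wedge\omega_5$ already lies in $H^0(F,\omega_F\otimes\cI_{D_r})$ — equivalently, that every line of $V$ meeting $l_r$ also meets the base plane of the dual pencil of $l_r$ — and that is exactly the input isolated in the proof of the preceding Lemma; the nonvanishing of $\rho$ is then immediate from Theorem~\ref{dddouble_lines}, everything else being the bookkeeping with $H^0(F,\omega_F)\cong\Lambda^2R^1$ indicated above.
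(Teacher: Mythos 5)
Your proof is correct and follows essentially the same route as the paper: the preceding Lemma gives $\dim H^0(F,\omega_F\otimes\cI_{D_r})=4$, so the quotient by $W(r)^2$ is a line generated by the Schubert form $\Omega_\pi=\omega_4\wedge\omega_5$ of the base plane $\pi=T_A(V)\cap T_B(V)$ of the dual pencil, the adjoint class lands there by Theorem~\ref{adjoint_thm}~a), and $\rho\neq 0$ follows from Theorem~\ref{dddouble_lines} since $r\in\Sigma\setminus F$. This is exactly the argument the paper intends (it leaves it implicit in the discussion preceding the Proposition), so nothing is missing.
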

   
\begin{rem} 
 It is easy  to see  that a special line $r \notin F$   intersects $V$ in either $3$ points 
or else $V\cdot  l_r  = {3P} $. Indeed if $l_r$ of equation    $  z_0 =  z_1 =  z_2 = 0$,
is simply tangent to $V$ at the point $P:=$  $[0:\dots:0:1] $ then 
we may assume that the equation for  $V$
 is  of type 
$E = A z_0 + B z_1 + C z_2 + z_3 ^ 2 z_4 .$ We have that the restriction of the quadrics  $ \partial E / \partial z_3 $ e $ \partial E / \partial z_4 $
are  independent  on  $l_r$. Still, they vanish on $P$,  and since $l_r$ is special, then 
it must be the case that all $\partial E / \partial z _j $ vanish on $P$, which then should be singular on $V$.
One can check that the points $P$ as above, where a special line is $3-$tangent to $V$,  are the points on the intersection with the Hessian hypersurface associated to $V.$  These points have the property that the tangent space cuts $V$ in a surface with singularity either of biplanal or uniplanar type (or else $P$ could be an Eckardt point).
The special line in the biplanar situation  is the line through $P$ which is the intersection of the two  components on the tangent cone, while in the uniplanar case any line of the pencil of base $P$ is special.

It is apparent from the preceding discussion that in general the adjoint class is represented by the canonical divisor $K_{\pi}$ corresponding to the  plane $\pi$, base of the dual pencil. By continuity therefore the adjoint class for $r \in \Sigma$  should always be represented by the same type of Schubert  divisor.
\end{rem}


\section{Infinitesimal invariants}
First we recall briefly  the definition of the infinitesimal invariant of a normal function associated with a varying family of cycles. This is a theme of  Griffiths, investigated by him, M. Green and C. Voisin \cite{GriffIII}, \cite{Green1}, \cite{Green2} and \cite{Voisin2}.
Next we turn to \cite{PiZu} so to compute this invariant by means of the method of the adjunction form.
Finally we can  apply our preceding  results   to detect the properties  of the normal function associated with  the cycle $F-F^-$ in $JV$, and draw as a consequence 
the statements about the Abel-Jacobi class of this cycle.


\subsection{Definition of the infinitesimal invariant}

We state some facts, mostly for our notational convenience. A complete reference is \cite{Voisin3}.

Let $\rho: \cX\lra B$ be a smooth projective morphism onto a smooth base $B$. Let $\cH^k$ be the sheaf
$R^k \rho_{*} \mathbb C \otimes \cO_B$ with the holomorphic subbundle $F^i \cH^k$ determined by the Hodge filtration on each fiber \label{infinv}
$H^k(X_b,\mathbb C)$. The $(2p-1)-th$ intermediate Jacobian of $X_b$ is defined as: 
\begin{equation*}
 J^{2p-1}(X_b):=H^{2p-1}(X_b,\mathbb C)/(F^pH^{2p-1}(X_b, \mathbb C)+H^{2p-1}(X_b,\mathbb Z)). 
\end{equation*}
All these tori fit in a family over $B$. The sheaf of holomorphic sections of this family is
\begin{equation*}
 \mathcal J^{2p-1}:=\cH^{2p-1}/(F^p\cH^{2p-1}+\cH^{2p-1}_{\mathbb Z}),
\end{equation*}
where $\cH^{2p-1}_{\mathbb Z}$ is the locally constant sheaf on $B$ with fiber $H^{2p-1}(X_b,\mathbb Z)/(\text{torsion})$ over $b$.

A codimension $p$ cycle $\cZ$ on $\cX$  
such that $\cZ_b\equiv_{hom}0,\, \forall b\in B$,
defines  a {\em normal function} $\nu _{\cZ}:B \lra \cJ^{2p-1}$
sending $b$ to the Abel-Jacobi image of $\cZ _b$. The Gauss-Manin connection
\begin{equation*}
 \nabla: \cH^{2p-1} \lra \cH^{2p-1}\otimes _{\cO_B} \Omega^1_B
\end{equation*}
satisfies  the property that $\nabla (F ^i\cH ^{2p-1}) \subset F^{i-1} \cH^{2p-1} \otimes \Omega ^1_B $ (Griffiths' transversality theorem). Hence
it induces 
\begin{equation*}
 \nabla^{p-1}_p :\frac {F^p\cH^{2p-1}}{F^{p+1}\cH^{2p-1}} \lra \left(\frac {F^{p-1}\cH^{2p-1}}{F^p\cH^{2p-1}}\right)
 \otimes _{\cO_B}\Omega^1_B.
\end{equation*}

Let $\tilde \nu_{\cZ}:B\to \cH^{2p-1}$ be a a local lifting of $\nu_{\cZ}$. The infinitesimal invariant  $\delta _{\cZ}$ is
defined as the section of $Coker (\nabla^{p-1}_p)\to B$ given by the composition $\nabla \circ \tilde \nu_{\cZ}$. The non-triviality of $\delta_{\cZ}$ implies that $\nu_{\cZ}$ is not a torsion section. 


\subsection{The  Fano surface in its Albanese variety}
We return to our cubic threefold  $V\subset \bP^4$ and to the Fano surface $F$ of the  lines  on $V.$ There is a universal morphism from  the Albanese variety $Alb(F)$ to the  intermediate Jacobian $JV$, due to the universal properties of the Albanese map and the use of the restriction of the Abel-Jacobi map to the $1-$cycles represented by  the lines.  More precisely, the two maps, which we obtain by  fixing a point  $r\in F$, in fact  coincide  \cite{CG}:  \[
a_r: F\to Alb(F) = JV.
\]
Moreover  in this way $F$ is embedded in $J(V)$.   By abuse of language we call $F\subset JV$
 the cycle $a_r(F).$ Composing the Albanese map with the $-1$ involution on $JV$ we get another embedding $a_r^-: F\to JV$ and we  let $F^-$ be the image. 
The $-1$ involution acts trivially on the even cohomology, hence we see that the cycle $F-F^-$
is homologically equivalent to zero. This  is  completely analogous to the Ceresa cycle, see  \cite{Ce},  where it  is  dealt with the curve in its Jacobian. 
Let 
\[ 
 J^5(JV) =\frac{H^{0,5}(JV)\oplus H^{1,4}(JV)\oplus H^{2,3}(JV)}{H^5(JV,\bZ)}
\]
 be the intermediate Jacobian associated to the fifth cohomology of $JV.$ In dealing with the Abel-Jacobi map, $AJ$, it is convenient to use the equivalent definition: 
\[ 
 J^5(JV) =\frac{(H^{5,0}(JV)\oplus H^{4,1}(JV)\oplus H^{3,2}(JV))^*}{H_5(JV,\bZ)}.
\]
The Abel-Jacobi invariant of the Fano cycle is defined by us to be:
\begin{equation}\label{nu_r}
\nu_r(V)=AJ (F-F^-)=\int _{F^-}^F\in J^5(JV).
\end{equation}


\subsection{The Primitive Intermediate Jacobian}
The  natural principal polarization $\theta$ of $JV$ determines the Lefschetz splitting
\[ 
H^5(JV,\bZ)=\theta^2 H^1(JV,\bZ)\oplus \theta P^3(JV,\bZ)\oplus P^5(JV,\bZ),
\]
where 
\[
 \begin{aligned}
&P^3(JV,\bZ)=\ker(\theta^3:H^3(JV,\bZ)\to H^9(JV,\bZ)),\\  
 & P^5(JV,\bZ)=\ker(\theta:H^5(JV,\bZ)\to H^7(JV,\bZ))
\end{aligned}
\]
are  the primitive cohomology groups.

 On the complex primitive cohomology there is  the Hodge decomposition:
\[
P^5(JV,\bC)=\sum_{i+j=5}P^{i,j} 
\]
 where $P^{i,j}=\ker \theta: H^{i,j}(JV)\to H^{i+1,j+1}(JV)$.
 In particular
 $P^{5,0}=H^{5,0}(JV).$ Let $W\subset H^{1,0}(JV)$ be a $3-$ dimensional space and  $L\subset H^{0,1}(JV)$ be
 the annihilator of $W,$ so $\dim L=2.$   Consider the  inclusion  $\bigwedge^3 W\otimes H^{0,2}\subset H^{3,2}(JV).$
\begin{lem}\label{lem:wedge}
 We have the equality:
\[
\bigwedge^3 W\otimes \bigwedge^2 L=(\bigwedge^3 W\otimes H^{0,2}(JV))\cap  P^{3,2}.
\]
\end{lem}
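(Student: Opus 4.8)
The statement is an exercise in linear algebra inside the exterior algebra of $H^{1,0}(JV)$, once one uses that $JV$ is an abelian fivefold. Recall that for a complex torus $H^{p,q}(JV)=\bigwedge^p H^{1,0}(JV)\otimes\bigwedge^q H^{0,1}(JV)$, and that the principal polarization furnishes a perfect pairing $\langle\,,\,\rangle\colon H^{1,0}(JV)\otimes H^{0,1}(JV)\to\mathbb C$ (in the Clemens--Griffiths dictionary this is the product $R^1\otimes R^4\to R^5=\mathbb C$), for which $\theta=\sum_i e_i\wedge f_i$ whenever $(e_i)$ and $(f_i)$ are dual bases. The inclusion $\bigwedge^3W\otimes\bigwedge^2 L\subset(\bigwedge^3W\otimes H^{0,2})\cap P^{3,2}$ being clear once the right-hand side is understood, the whole point is to compute $\ker\bigl(\theta\wedge(-)\bigr)$ on $\bigwedge^3W\otimes H^{0,2}(JV)$.

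First I would choose a basis $e_1,\dots,e_5$ of $H^{1,0}(JV)$ with $W=\langle e_1,e_2,e_3\rangle$ and let $f_1,\dots,f_5$ be the dual basis of $H^{0,1}(JV)$; then $L=\mathrm{Ann}(W)=\langle f_4,f_5\rangle$ and $\bigwedge^3W=\mathbb C\,(e_1\wedge e_2\wedge e_3)$. Writing a general element of $\bigwedge^3W\otimes H^{0,2}(JV)$ as $(e_1\wedge e_2\wedge e_3)\otimes\omega$ with $\omega\in\bigwedge^2 H^{0,1}(JV)$, cup product with $\theta=\sum_k e_k\wedge f_k$ annihilates the terms $k\le 3$ and leaves, up to signs,
\[
\theta\wedge\bigl((e_1\wedge e_2\wedge e_3)\otimes\omega\bigr)=(e_1\wedge e_2\wedge e_3\wedge e_4)\otimes(f_4\wedge\omega)+(e_1\wedge e_2\wedge e_3\wedge e_5)\otimes(f_5\wedge\omega).
\]
Since $e_1\wedge e_2\wedge e_3\wedge e_4$ and $e_1\wedge e_2\wedge e_3\wedge e_5$ are linearly independent in $\bigwedge^4 H^{1,0}(JV)$, this element lies in $P^{3,2}$ if and only if $f_4\wedge\omega=0$ and $f_5\wedge\omega=0$ in $\bigwedge^3 H^{0,1}(JV)$.

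It then remains to see that a $2$-form $\omega$ satisfies $f_4\wedge\omega=f_5\wedge\omega=0$ precisely when $\omega\in\langle f_4\wedge f_5\rangle=\bigwedge^2 L$. Indeed $f_4\wedge\omega=0$ forces $\omega=f_4\wedge\psi$ for some $\psi\in H^{0,1}(JV)$, because the kernel of $f_4\wedge(-)\colon\bigwedge^2\to\bigwedge^3$ is exactly $f_4\wedge H^{0,1}(JV)$; then $f_5\wedge f_4\wedge\psi=0$ forces $\psi\in\langle f_4,f_5\rangle$, whence $\omega\in\langle f_4\wedge f_5\rangle$. The reverse inclusion is immediate. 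Combining, $\bigl((e_1\wedge e_2\wedge e_3)\otimes H^{0,2}(JV)\bigr)\cap P^{3,2}=(e_1\wedge e_2\wedge e_3)\otimes\bigwedge^2 L=\bigwedge^3W\otimes\bigwedge^2 L$, as claimed.

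There is no genuine obstacle here; the only step needing a little care is the normalization of $\theta$ as $\sum_i e_i\wedge f_i$ for dual bases and the resulting identification of $\mathrm{Ann}(W)$, both of which are standard for a principally polarized abelian variety and, in our setting, transparent from the isomorphisms $H^{1,0}(F)\cong R^1$, $H^{0,1}(F)\cong R^4$ together with the perfect pairing $R^1\otimes R^4\to R^5=\mathbb C$.
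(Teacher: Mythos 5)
Your proof is correct and follows essentially the same route as the paper: choose a basis of $H^{1,0}(JV)$ adapted to $W$ so that $\theta$ takes the standard form, and compute the kernel of $\theta\wedge(-)$ on $\bigwedge^3 W\otimes H^{0,2}(JV)$ explicitly, which is exactly the ``easy count'' the paper performs with an orthonormal basis $\Theta=-i\sum_i\omega_i\wedge\overline{\omega}_i$. Your use of a dual basis $(e_i),(f_i)$ for the polarization pairing in place of the paper's orthonormal basis is a harmless variant of the same argument.
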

\begin{proof}
This is a standard computation in linear algebra. We give some details for the reader's  convenience.
A polarization on   $H^{1}(JV, \mathbb C) $ amounts to an alternating form $Q(\alpha, \beta) $ such that the Hermitian form  $H(\alpha,   \beta ) := i Q(\alpha, \bar \beta)$ is positive definite on $H^{1,0}.$
We can fix an orthonormal basis $\{\omega_i\}_{i=1...5}$ of $H^{1,0}(JV)$ such that $W=<\omega_1,\omega_2,\omega_3>.$ By construction we have that the polarization can be represented by an element of the form:
\[
\Theta= -i \sum \omega_i\wedge\overline{\omega}_i.
\]
Now $P^{3,2}=\ker (\theta: H^{3,2}(JV)\to H^{4,3}(JV)),$ and the map 
is given by the cup-product with $\theta.$ An easy  count  yields that
$\ker(\theta)\cap (\bigwedge^3 W\otimes H^{0,2})$ is generated by the decomposable form
\begin{equation} \label{primsiprimno} 
\Omega_W=\omega_1\wedge\omega_2\wedge\omega_3\wedge\overline \omega_4\wedge \overline \omega_5, 
\end{equation} 
where 
$\{\omega_1,\omega_2,\omega_3\}$ is a basis of $W$ and $\{\overline{\omega_4},\overline{\omega_5}\}$ a basis of $L.$ This gives the equality.
\end{proof}

We can define the primitive intermediate Jacobian 
\[
P^5(V)= \frac{H^{0,5}(JV)\oplus P^{1,4}\oplus P^{2,3}}{P^5(JV,\bZ)}
\]
and similarly for $P^3(V)$.
From the Lefschetz decomposition above 
we get then a corresponding  decomposition for our  intermediate Jacobian
\begin{equation} J^5(JV)= \theta ^2 JV \oplus \theta P^3(V) \oplus P^5(V).\label{deco1}\end{equation}
 Hence the definition (\ref{nu_r}) becomes:
  \begin{equation}\label{deco3}
  \nu_r(V)=\nu^{1}_r(V)+\nu ^{3}_r(V)+\nu^5_r(V).\end{equation}
In the Proposition \ref{prop:vanishing_mu_3} below we prove that $\nu^3_r(V)=0$ and that $\nu^5_r(V)=\nu^5(V)$ does not depend on $r$. We call $\nu^5(V)$ the primitive 
Abel-Jacobi invariant of the cycle.
To prove this Proposition we need the following result:

\begin{prop}\label{prop:translations} Consider in  $JV$ an algebraic  cycle  $Z$ of class  $[Z]= c\, \theta ^{p}$, with $p = 3$ (resp. $p = 4$),  and  take a translate  $Z_a$.
Then the projection inside of $P^5(V)$ (resp. inside $ \theta ^2  P^3 (V)$) of the  Abel-Jacobi invariant of $Z_a - Z$ vanishes.
\end{prop}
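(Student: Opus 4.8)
The plan is to reduce the statement to a computation on cohomology using the functoriality of the Abel-Jacobi map under translation, together with the fact that translations act on $H^*(JV,\bZ)$ in a controlled way. First I would recall that for an abelian variety $A=JV$ of dimension $5$ and a codimension-$p$ cycle $Z$ homologous to $c\,\theta^p$, the difference $Z_a-Z$ is homologous to zero, so its Abel-Jacobi class lives in $J^{2p-1}(A)$; for $p=3$ this is $J^5(A)$ and for $p=4$ this is $J^7(A)\cong J^5(A)$ via $\theta$, which is why the two cases are parallel. The key observation is that as $a$ varies over $A$, the family $\{Z_a-Z\}$ defines a cycle on $A\times A$ homologous to zero fibrewise, hence a normal function on $A$; its Abel-Jacobi class is, up to a constant depending only on $[Z]=c\theta^p$, the ``addition/difference'' map associated with the polarization, and in particular it factors through $A=JV$ itself sitting inside $J^{2p-1}(A)$ via the Lefschetz embedding $\theta^{p-1}H^1\subset H^{2p-1}$.

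Concretely, I would argue as follows. Write $m,p_1,p_2:A\times A\to A$ for the sum and the two projections, and $\tau_a$ for translation by $a$. Then $Z_a = \tau_a(Z)$ and $\tau_a^*$ acts on $H^{2p}(A,\bZ)$; since $[Z]=c\theta^p$ is a multiple of a power of the polarization class, which is translation-invariant, one has $[\tau_a(Z)]=[Z]$ and moreover the ``derivative'' of the family $\tau_a(Z)$ in the direction of $a$ lands, after applying $\nabla$ and projecting to the appropriate graded piece, in the image of $H^1(A)\wedge(\text{something involving }\theta^{p-1})$. Thus the normal function $a\mapsto AJ(Z_a-Z)$ takes values in the sub-torus $\theta^{p-1}\cdot(H^1(A,\bZ))$-part of $J^{2p-1}(A)$, i.e.\ in $\theta^2 JV$ when $p=3$ (inside the decomposition \eqref{deco1}) and in $\theta^3 JV$ when $p=4$. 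In the $p=3$ case the decomposition \eqref{deco1} reads $J^5(JV)=\theta^2JV\oplus\theta P^3(V)\oplus P^5(V)$, and the claim is exactly that the $P^5(V)$-component vanishes — which follows since the value lies in the first summand $\theta^2JV$. For $p=4$, the analogous Lefschetz decomposition of $J^7(JV)$ has its ``$\theta^3 H^1$''-piece identified under $\cdot\theta^{-1}$ with $\theta^2 P^3(V)$... (actually, one should be careful: the statement asks for vanishing of the projection to $\theta^2 P^3(V)$, so the value must lie in the complementary summands); I would match the pieces of the Lefschetz decomposition of $H^7$ under multiplication by $\theta$ against those of $H^5$ and check that the ``primitive $P^3$'' piece is not hit.

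The cleanest way to see the cohomological statement is via the Pontryagin/Fourier–Mukai picture: the map $a\mapsto AJ(Z_a-Z)$ is, up to translation in $J^{2p-1}(A)$, a homomorphism of complex tori (it is the composite of $a\mapsto (Z_a-Z)$ with $AJ$, and the cycle class being a power of $\theta$ forces the induced map on $H^1$'s to be ``multiplication by $c\theta^{p}$''-type, i.e.\ an isogeny onto the Lefschetz-$H^1$ piece). A homomorphism from $A=JV$ (whose $H^1$ is irreducible of weight $(1,0)+(0,1)$) into $J^{2p-1}(A)$ must land in the sub-Hodge-structure of $J^{2p-1}(A)$ isomorphic to $H^1(A)$, which under the Lefschetz decomposition is precisely the $\theta^{p-1}H^1$-summand; and that summand meets $P^5(V)$ (resp. $\theta^2 P^3(V)$) trivially by definition of the primitive parts. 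I expect the main obstacle to be bookkeeping: making precise that the normal function $a\mapsto AJ(Z_a-Z)$ is genuinely a homomorphism of tori up to a constant (this uses that $[Z]$ is proportional to $\theta^p$ in an essential way — e.g.\ via the theorem of the cube applied to the line bundle / cycle-theoretic analogue, or via a direct Gauss–Manin computation that the infinitesimal invariant of this normal function is the constant section coming from $c\theta^p$), and then matching up the Lefschetz pieces of $H^5$ and $H^7$ correctly so that ``lands in the $H^1$-piece'' translates into the two stated vanishings. Once that is in place, the Proposition is immediate.
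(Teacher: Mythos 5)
Your overall strategy is the right one and is in the same spirit as the paper: both arguments come down to showing that the Abel--Jacobi class of $Z_a-Z$ lies in the Lefschetz summand $\theta^{p-1}H^1(JV)$ of $H^{2p-1}$, so that its projection to the primitive pieces $P^5(V)$, resp.\ $\theta^2P^3(V)$, vanishes. The genuine gap is that you never establish this key inclusion; you assert it twice with justifications that do not hold up. Your ``cleanest'' argument --- that a homomorphism of tori from $JV$ into $J^{2p-1}(JV)$ must land in \emph{the} copy of $H^1$ inside $\bigwedge^{2p-1}H^1$, namely $\theta^{p-1}H^1$ --- requires that $\bigwedge^{2p-1}H^1(JV)$ contain no other sub-Hodge structure isomorphic to $H^1(JV)$, i.e.\ a generic Mumford--Tate group, and also irreducibility of $H^1(JV)$; neither is verified, and the proposition is stated and used for \emph{every} smooth cubic threefold (it is applied fibrewise, e.g.\ to $C_t-C_t^-$ in Proposition \ref{prop:vanishing_mu_3}). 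Moreover even the statement that $a\mapsto AJ(Z_a-Z)$ is a homomorphism of tori secretly uses translation-invariance of the Abel--Jacobi map on homologically trivial cycles, which you do not prove and whose proof is essentially the same chain-level computation you are trying to bypass. Your alternative route (``the derivative lands in the image of $H^1\wedge(\text{something involving }\theta^{p-1})$'' via a Gauss--Manin computation) is exactly the point where the hypothesis $[Z]=c\,\theta^p$ must enter, and you leave it as a flagged obstacle rather than carrying it out.

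For comparison, the paper supplies precisely this missing computation, with no genericity and no torus-homomorphism formalism. It first proves the Pontrjagin-product inclusion $[Z]\ast H^{9}(JV,\bC)\subset \theta^{p-1}H^1(JV,\bC)$, using Lieberman's facts that $[Z]=k(\theta^4)^{\ast(5-p)}$ and that Pontrjagin product with $\theta^4$ is a multiple of the operator $\Lambda$ dual to the Lefschetz operator, together with $H^9=\theta^4H^1$. Then, writing $Z_a-Z=\partial(Z\ast\gamma)$ for a path $\gamma$ from $0$ to $a$, it pairs against a primitive harmonic form $\omega\in F^{6-p}H^{11-2p}$ and computes $\int_{Z\ast\gamma}\omega=\int_{\gamma\times Z}\mu^{\ast}\omega$, which vanishes because the $(1,10)$ K\"unneth component of $\mu^{\ast}[\omega]\wedge(1\times[Z])$ is zero: its pushforward under $\mu_{\ast}$ is $[\omega]\wedge([\alpha]\ast[Z])\in[\omega]\wedge\theta^{p-1}H^1$, killed by primitivity of $\omega$. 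To repair your proposal you would need to either reproduce this Pontrjagin/K\"unneth computation (which is where $[Z]=c\,\theta^p$ is genuinely used) or restrict to generic $V$ and prove the multiplicity-one statement for copies of $H^1$ in $\bigwedge^{2p-1}H^1$; as written, the proposal assumes the conclusion of the paper's key lemma rather than proving it.
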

\begin{proof}
Let  ${\ast }$  be  the Pontrjagin product. We first remark  that
\begin{equation}\label{Pontrjagin}
   [Z] {\ast} H^{9} (JV,\mathbb C) \subset \theta^{p-1}  H^{1} (JV,\mathbb C).
\end{equation}
This is a consequence of  the following facts,   cf. \cite{lieberman}:
(i) the hypothesis yields  $[Z] = k( \theta ^{4})^{\ast \, (5-p)}\,$ (ii)
the action on  $H^{\ast} (JV,\mathbb C)$   given by   ${\ast } $ with $\theta ^{4} $ is a multiple of the operator  $\Lambda$ dual to the Lefschetz operator $L(z)= \theta \wedge z.$  
Then (\ref{Pontrjagin})  holds  because $H^{9} (JV,\mathbb C)= \theta ^ {4}  H^{1} (JV,\mathbb C)$.
Given a  form $\omega$  of type $11- 2p$    we look at
the product $ \mu^{\ast}  [\omega]  \wedge  ([\alpha] \times [Z]) $  where  $\mu: JV \times JV \to JV$ is the sum
and $[\alpha] \in H^{9} (JV,\mathbb C)$. 
  \begin{lemma} The  K\"unneth component  of type $(1,10)$ 
 of  $\mu^{\ast} [\omega]\wedge (1_{JV} \times [Z])  $ vanishes  if $\omega$ is primitive and  $p  \neq 5$.
\end{lemma} 
 \begin{proof} One has
 $ \mu_ {\ast} ( \mu^{\ast} [\omega]   \wedge  ([\alpha] \times [Z] )) = [\omega ] \wedge ( [ \alpha ]  {\ast}[ Z])\,
\overset{(\ref{Pontrjagin})}{=}  ([\omega]  \wedge  \theta^{p-1}) \wedge \dots \, =\, 0 $, since $[\omega], $ being primitive, is by definition in the kernel
 of $\theta^{2p-5}$. 
 \end{proof}
To finish the proof of the proposition we take a path  $\gamma$   from the origin  to $a$ so that 
$Z {\ast} \gamma $  is a chain with boundary our  cycle $Z_a- Z$. We show  that 
$\int _{Z {\ast} \gamma }  \omega = 0$ for any
$\omega$ harmonic form  representing a primitive  cohomology class from $ F^ {6-p} H^ {11-2p}$.
Indeed:
\[
\int _{Z {\ast} \gamma }  \omega=  \int _{\gamma \times Z}\mu^{\ast} (\omega),
\]
which we compute by means of Fubini's theorem integrating first along $Z$. Then we note that the remaining integral along $\gamma$ vanishes because we should integrate the  first part  from  the K\"unneth component of type $(1,10)$ of  $\mu^{\ast} [\omega]\wedge (1_{JV} \times [Z])  $, which  is null  in force of   the preceding lemma.
\end{proof}
Now we can prove:
\begin{prop}
\label{prop:vanishing_mu_3}
 For the Fano normal function one has  $\nu^{3}_r(V)=0$. Moreover $\nu^5_r(V)$ does not depend on the choice of the line $r$. \end{prop}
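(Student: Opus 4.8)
The plan is to use two elementary facts and then Proposition \ref{prop:translations}. First, the Fano surface sits in $JV$ with minimal cohomology class, $[F]=[F^{-}]=\tfrac{1}{3!}\theta^{3}$ in $H^{6}(JV,\mathbb Z)$ (Clemens--Griffiths; the two classes coincide because $-1$ acts as the identity on $H^{6}$). Second, changing the marked point from $r$ to $r'$ replaces the embedded surface $F=a_{r}(F)$ by its translate $F+b$ with $b=a(r)-a(r')\in JV$, and simultaneously replaces $F^{-}=a_{r}^{-}(F)$ by the translate $F^{-}-b$ (the two translations are opposite since $a_{r}^{-}=-a_{r}$). Hence
\[
\nu_{r'}(V)-\nu_{r}(V)=AJ\bigl((F+b)-F\bigr)-AJ\bigl((F^{-}-b)-F^{-}\bigr)\in J^{5}(JV).
\]

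For the independence of $\nu^{5}_{r}(V)$ I would apply Proposition \ref{prop:translations} with $p=3$ to $Z=F$, of class $\tfrac{1}{3!}\theta^{3}$, and its translate $Z_{b}=F+b$: the projection of $AJ(Z_{b}-Z)$ onto the primitive part $P^{5}(V)$ vanishes; likewise for $Z=F^{-}$ and $Z_{-b}=F^{-}-b$. Consequently the $P^{5}(V)$-component of $\nu_{r'}(V)-\nu_{r}(V)$ is zero, that is $\nu^{5}_{r'}(V)=\nu^{5}_{r}(V)$, and we may write $\nu^{5}(V)$. Observe that the $p=3$ case of Proposition \ref{prop:translations} says nothing about the $\theta P^{3}(V)$-summand of a translate-difference, so the vanishing of $\nu^{3}_{r}$ genuinely needs a separate argument.

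The vanishing $\nu^{3}_{r}(V)=0$ is the delicate point. By the Lefschetz splitting (\ref{deco1}) the component $\nu^{3}_{r}(V)\in\theta P^{3}(V)$ is detected by pairing $AJ(F-F^{-})$ with classes $\theta\wedge\psi$, $\psi\in P^{3,0}\oplus P^{2,1}\subset H^{3}(JV)$. I would reduce to the $p=4$ case of Proposition \ref{prop:translations} as follows: by the compatibility of the Abel--Jacobi map with intersection by an ample divisor, for a general translate $\Theta_{a}$ of the symmetric theta divisor one has $\langle AJ(F-F^{-}),\,\theta\wedge\psi\rangle=\langle AJ\bigl((F-F^{-})\cdot\Theta_{a}\bigr),\,\psi\rangle$, where $(F-F^{-})\cdot\Theta_{a}=(F\cdot\Theta_{a})-(F^{-}\cdot\Theta_{a})$ is a difference of two curves each of class $\tfrac{1}{3!}\theta^{4}$, and $F^{-}\cdot\Theta_{a}$ is the image under $-1$ of $F\cdot\Theta_{-a}$. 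It then remains to show that the $\theta^{2}P^{3}(V)$-component of the Abel--Jacobi class of this $1$-cycle vanishes. For that I would run the Pontryagin/K\"unneth computation in the proof of Proposition \ref{prop:translations} (case $p=4$) on the Ceresa-type bounding chain furnished by the scaling family $t\mapsto [t]\bigl(F\cdot\Theta_{a}\bigr)$, $t\in[-1,1]$, whose members all pass through the base point $0=a_{r}(r)$ and whose radial deformation field is exactly the one that kills the relevant $(1,10)$-K\"unneth component when $\psi$ is primitive and $p\neq 5$. The main obstacle is precisely that this bounding chain is not literally a Pontryagin product $Z\ast\gamma$, so the computation of Proposition \ref{prop:translations} has to be adapted to the scaling family rather than quoted; an alternative that sidesteps this is to show directly that the infinitesimal invariant $\delta_{\nu^{3}}$ is identically zero on the moduli space — a surjectivity statement for a multiplication map in the Jacobian ring $R$, plausible from $(\dim R^{i})=(1,5,10,10,5,1)$ — whence $\nu^{3}$ is a flat section of the corresponding family of tori $\theta P^{3}(V)$ over $B$, and then to invoke the largeness of the monodromy of cubic threefolds in $\mathrm{Sp}(H^{3}(V))$, under which $P^{3}$ has no nonzero invariant classes nor invariant torsion, forcing $\nu^{3}\equiv 0$.
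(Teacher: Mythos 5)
Your argument for the independence of $\nu^5_r(V)$ on $r$ is correct and is essentially the paper's: a change of base point translates $F$ and $F^-$, and Proposition \ref{prop:translations} with $p=3$ kills the $P^5(V)$-component of the resulting differences. The reduction of $\nu^3_r$ to the $1$-cycle $\Theta\cdot F-\Theta\cdot F^-$ is also in the paper (via \cite{Voisin3}, Prop.\ 9.23). The genuine gap is in what you do after that reduction. Your first route rests on the ``scaling chain'' $t\mapsto [t]\bigl(F\cdot\Theta_a\bigr)$, $t\in[-1,1]$: multiplication by a non-integral real $t$ does not descend to the torus $JV$, so this chain does not exist as stated, and you concede the Pontryagin/K\"unneth computation of Proposition \ref{prop:translations} cannot simply be quoted for it. Worse, no purely formal argument of this kind can work: the two curves $F\cdot\Theta_a$ and $F^-\cdot\Theta_a$ are \emph{not} translates of each other (they are related by $(-1)$), and a formal proof that the $\theta^2P^3$-component of the Abel--Jacobi class of such a $(-1)$-symmetric difference vanishes would apply verbatim to $W_2\cdot\Theta_a-W_2^-\cdot\Theta_a$ in a Jacobian, contradicting the fact (recalled in Remark \ref{hain}, via Hain) that the analogous $\gamma^3$ is in general non-torsion there. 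The vanishing of $\nu^3$ is a special feature of the cubic threefold and needs geometric input, which is exactly what the paper supplies: it writes $\Theta\cdot F-\Theta\cdot F^-=(\Theta\cdot F-2C_t)+2(C_t-C_t^-)+(2C_t^--\Theta\cdot F^-)$, uses that $\Theta\cdot F$ and $2C_t$ are homologous divisors \emph{on} $F$ together with $\Pic^0(F)\cong\Pic^0(JV)$ and $[F]=\theta^3/3!$ to place the outer summands in $\theta^3\cdot\Pic^0(JV)$ (hence with no $\theta^2P^3$-component), and uses the Prym-curve property $C_t^-=t_\lambda^*(C_t)$ (a translate!) so that Proposition \ref{prop:translations} with $p=4$ applies to the middle term. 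Your proposal contains neither the relation $\Theta|_F\sim_{hom}2C_t$ nor the translation-invariance of $C_t$ under $(-1)$, and these are the crux.

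Your alternative route (showing $\delta_{\nu^3}\equiv 0$ by a surjectivity statement in the Jacobian ring, deducing that $\nu^3$ is flat, and killing it by monodromy) is a reasonable strategy in spirit, but as written it is only a sketch: the surjectivity is asserted as ``plausible'' rather than proved, the implication from vanishing infinitesimal invariant to local flatness of the normal function needs a genuine argument (\`a la Green--Voisin), and the torsion ambiguity must be handled before monodromy can be invoked. As it stands, the statement $\nu^3_r(V)=0$ is not established by your proposal.
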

  \begin{proof}
 It is enough to show  $\theta \cdot \nu^{3}_r(V)=0$. By \cite{Voisin3}, proposition 9.23, this is the same thing as  to check   that the cycle $ \Theta \cdot  F - \Theta \cdot F^-  $ has the property that its Abel-Jacobi invariant has null component in  $\theta ^2 P^3 (V)$.  We use 
\[
  \Theta \cdot  F - \Theta \cdot F^- = ( \Theta \cdot  F -  2 C_t )  + 2 (C_t - C_t^- ) + (2 C_t^-  -  \Theta \cdot F^- ),
\]
 where  
$C_t$ is the curve in $F$ of the  lines that intersect  a given line $l_t\in V$.  We claim that the Abel-Jacobi invariant of the first summand belongs to 
  $ \theta^ 3  \cdot Pic^o ( J)$,  which then implies   the vanishing of its second component.
Indeed   (i)   $ \Theta \cdot  F$ and $  2 C_t  $  are homologous  divisors in  $F$ , (ii)  the Albanese embedding of $F$  in $J$  induces   an isomorphism of the Picard varieties and (iii) the cohomology class of $F$  is  $  \theta^ 3 / 3!$ .  The same thing holds for the last summand. On the other hand   $C_t$ (which is a ``Prym curve'' for $JV$) is known to be invariant by $(-1)^*$ up to   translation by some point $\lambda \in J$, so that  $C_t^-=t_{\lambda }^*(C_t)$. Then  Proposition \ref{prop:translations} above applies to $(C_t - C_t^- )  $. 
  The statement about  $\nu^5_r(V)$ is also   proved using the same proposition, because a change in the base line  determines  a translation on the Albanese image of $F$.
\end{proof}

Our next aim is to  show that on a  generic cubic threefold $V$ the value   $\nu^5(V)$ is not a  torsion point  in the corresponding intermediate Jacobian $P^5(V)$.


\subsection{The Fano Normal Function}
We deal now  with a family of smooth  cubic threefolds $\pi_{\cV}: \cV\to B$, the corresponding
 Fano surfaces family $\pi_{\cF}:\cF \to B$, and the family of their Albanese varieties $\pi_{\cA}:\cA\to B$ which coincides with the family of  intermediate Jacobians $\pi_{\cJ}:\cJ\to B$. We then  consider the family of intermediate  Jacobians which are built using the  fifth cohomology of the fibres of $\cJ$ (i.e. they are the intermediate Jacobians  of the intermediate Jacobians):
 \begin{equation}\label{int-int}
\pi_{\cJ}: \cJ^5\to B,
 \end{equation}
where the fiber over  $b\in B$ is $J^5(JV_b).$

 The family of  intermediate  Jacobians of   cubic threefolds  $\pi_{\cV}$ depends on the local system $R^3\pi_{\cV}{_\ast}\bZ=\{H^3(V_b,\bZ) \}_{b\in B},$
 while  $\pi_{\cJ} $ depends on the local system $ R^5\pi_{\cA}{_\ast}\bZ$ with  fibres
 $$\{H^5(J(V_b),\bZ)\}_{b\in B} \simeq \{\bigwedge^5 H^3(V_b,\bZ)\}_{b\in B}.$$
 From (\ref{deco1}) we obtain then  a decomposition of the fibration:
\begin{equation}\cJ^5\simeq \cJ\oplus \cP^3\oplus \cP^5\label{deco2}\end{equation}
where  $\pi_3: \cP^3 \to B$ and
$\pi_5: \cP^5 \to B$ denote the obvious  families of complex tori.
 
  We assume for a while that  we are given a section $s:B\lra \cF$ of $ \pi_{\cF}:\cF \to B.$ Note that locally such section always exists. The section $s$ allows us to define the  family of Albanese maps $a_b:F_b\to Alb(F_b)=JV_b$ and then it  allows us to construct two embeddings of  families $\cF \hookrightarrow \cJ$  and  $\cF^- \hookrightarrow \cJ.$
Now  $\cJ$ plays here  the r\^ole of $\cX$  in (\ref{infinv}), the cycle $\cZ$ being then  $\cF-\cF^-,$ with fibre the  cycle $F_b-F_b^-$ which  is homologically equivalent to zero. In this way the use of the Abel-Jacobi mapping provides  a section $\nu_s$ of $\cJ^5,$ which  is a normal function according to  Griffiths (\cite{GriffIII}). In view of Proposition (\ref{prop:vanishing_mu_3}) we have  from (\ref{deco3}):
\begin{equation} \label{deco4} \nu_s=\nu^{1}_s+\nu^{3}+\nu^5 =\nu^{1}_s+\nu^5
\end{equation}

\begin{definition} We call  $\nu=\nu^5 $ the normal function associated to the Fano cycle. Note  that it 
 does not depend on the choice of the  section $s.$
\end{definition}


\subsection{Forms, deformations and the computation of the infinitesimal invariant.}
In \cite{PiZu} the infinitesimal invariant attached to an irregular surface in its Albanese variety is computed. We take from there  that the infinitesimal invariant can be seen as a functional on $\mbox{Ker}(\gamma)$,
where 
\begin{equation}\label{functional}
\gamma:   T_{B,b}\otimes H^2(JV_b,\Omega ^3_{JV_b}) \lra H^3(JV_b,\Omega ^2_{JV_b})
\end{equation}
is induced as usual by the cup-product and the Kodaira-Spencer map.
 We will use presently  the main formula \cite[Corollary 5.2.4]{PiZu} (see Theorem \ref{formula} below). In preparation  we need to present some  new considerations on differential forms.    

First we observe that given a deformation $\zeta \in H^1(T_F)=H^1(T_V)= R^3$
there is a natural primitive form $\Omega_{\zeta}\in P^{3,2}(A)$, defined up to a constant, which is given as follows:
 \begin{equation}
\Omega_{\zeta} =\zeta\cdot(\zeta \cdot \Omega),
\end{equation}
where $\Omega$ is a generator of $H^{5,0}(JV)=P^{5,0}.$
Notice that $\Omega_{\zeta}$ is primitive since $\zeta\cdot \theta=0.$ 

Given  a line $r\in \Sigma$ we have  the corresponding first order  deformation of rank $2$, $\xi\in R^3$, and we write  
$K_1(\xi )=W(r)$ (see \S 3).
\begin{lemma}\label{linalg}
For any $r$ and $ \xi$ as above, the form $\Omega_{\xi}$ is a non trivial decomposable primitive form.
More precisely
\[
\langle\Omega_{\xi}\rangle= \langle\Omega_{W(r)}\rangle=\langle\omega_1 \wedge \omega_2 \wedge \omega_3 \wedge \overline{\omega_4} \wedge \overline{\omega_5}\rangle,
\]
where $\{\omega_1,\dots ,\omega_5\}$ is a basis of $H^{1,0}(JV)$ orhogonal with respect to the polarization and such that $\omega_1,\omega_2,\omega_3$ is a basis of $W(r)$ and $\overline {\omega _4},\overline {\omega _5}$ is a basis of the annihilator $L$ of $W(r)$.
 \end{lemma}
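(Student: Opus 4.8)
The plan is to reduce the statement to a one-line computation carried out in a basis adapted to $\xi$, and then to use the compatibility of the infinitesimal variation of Hodge structure of $JV$ with its polarization in order to pin down the answer. Since $r\in\Sigma$, the associated $\xi$ has $K_1(\xi)=W(r)$, a $3$-dimensional subspace of $R^1$; I would choose an orthonormal basis $\{\omega_1,\dots,\omega_5\}$ of $H^{1,0}(JV)\cong R^1$ with respect to the polarization such that $\omega_1,\omega_2,\omega_3$ spans $W(r)$, and take $\Omega=\omega_1\wedge\dots\wedge\omega_5$. The input I would use is that, under the Clemens-Griffiths identifications $R^1\cong H^{1,0}(JV)$ and $R^4\cong H^{0,1}(JV)$, the action of $\xi\in R^3=H^1(V,T_V)$ on $H^1(JV)$ is the Jacobian-ring multiplication $\delta(\xi)\colon R^1\to R^4$ on the $(1,0)$-part and is zero on the $(0,1)$-part (there is nothing in bidegree $(-1,2)$), and that $\xi$ acts on $H^\bullet(JV)=\bigwedge^\bullet H^1(JV)$ as a derivation extending this.

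Writing $\overline{\mu}_j:=\delta(\xi)(\omega_j)\in H^{0,1}(JV)$ for $j=4,5$, the derivation rule together with $\delta(\xi)\omega_1=\delta(\xi)\omega_2=\delta(\xi)\omega_3=0$ gives
\[
\xi\cdot\Omega=\omega_1\wedge\omega_2\wedge\omega_3\wedge\overline{\mu}_4\wedge\omega_5+\omega_1\wedge\omega_2\wedge\omega_3\wedge\omega_4\wedge\overline{\mu}_5,
\]
and applying $\xi$ once more, which kills every $(0,1)$-factor, yields
\[
\Omega_{\xi}=\xi\cdot(\xi\cdot\Omega)=2\,\omega_1\wedge\omega_2\wedge\omega_3\wedge\overline{\mu}_4\wedge\overline{\mu}_5.
\]
Since $\dim K_1(\xi)=3$, the image of $\delta(\xi)$ is exactly $2$-dimensional, so $\overline{\mu}_4,\overline{\mu}_5$ are linearly independent; hence $\Omega_\xi$ is nonzero and, being a product of five $1$-forms, decomposable. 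Primitivity is the remark already recorded in the text: $\xi\cdot\theta=0$, and $\theta\wedge\Omega=0$ for dimension reasons ($\dim JV=5$), so two applications of the Leibniz rule give $\theta\wedge\Omega_\xi=0$.

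It remains to show that $\overline{\mu}_4,\overline{\mu}_5$ span the annihilator $L$ of $W(r)$, which then identifies $\langle\Omega_\xi\rangle$ with $\langle\Omega_{W(r)}\rangle$. Here I would use that the polarizing alternating form $Q$ on $H^1(JV)$ is flat for the Gauss-Manin connection; combined with the Riemann relation that $H^{1,0}(JV)$ is $Q$-isotropic, this makes the bilinear form $b(\alpha,\beta):=Q(\delta(\xi)\alpha,\beta)$ on $H^{1,0}(JV)$ symmetric (differentiate $Q(\alpha,\beta)\equiv 0$ along $\xi$). Consequently, for $i\in\{1,2,3\}$ and $j\in\{4,5\}$,
\[
Q(\overline{\mu}_j,\omega_i)=b(\omega_j,\omega_i)=b(\omega_i,\omega_j)=Q(\delta(\xi)(\omega_i),\omega_j)=0
\]
because $\omega_i\in W(r)=\ker\delta(\xi)$. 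Thus $\overline{\mu}_4,\overline{\mu}_5\in W(r)^{\perp}=L$, and being independent in the $2$-dimensional space $L$ they form a basis of it. Therefore $\overline{\mu}_4\wedge\overline{\mu}_5$ is a nonzero multiple of $\overline{\omega}_4\wedge\overline{\omega}_5$, so $\Omega_\xi$ is a nonzero multiple of $\omega_1\wedge\omega_2\wedge\omega_3\wedge\overline{\omega}_4\wedge\overline{\omega}_5=\Omega_{W(r)}$, which is the claimed equality of lines.

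The computation itself is routine; the only delicate point is the symmetry of $b$, that is, the compatibility of the infinitesimal variation of Hodge structure of $JV$ with its polarization. This is precisely what forces the $2$-dimensional image of $\delta(\xi)$ to be $L$, rather than an arbitrary plane in $H^{0,1}(JV)$, and hence places the decomposable form $\Omega_\xi$ in the one-dimensional space of primitive classes singled out in Lemma \ref{lem:wedge}. I expect that compatibility to be the (mild) crux of the proof.
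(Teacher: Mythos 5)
Your proof is correct, and its first half is exactly the paper's computation: using that $\xi$ acts as a derivation whose kernel on $H^{1,0}$ is $W(r)=K_1(\xi)$, both you and the authors arrive at $\Omega_\xi = 2\,\omega_1\wedge\omega_2\wedge\omega_3\wedge(\xi\cdot\omega_4)\wedge(\xi\cdot\omega_5)$. Where you genuinely diverge is the identification of the line spanned by this form. The paper notes that $\Omega_\xi$ lies in $(\bigwedge^3 W(r)\otimes H^{0,2}(JV))\cap P^{3,2}$ (primitivity coming from $\xi\cdot\theta=0$) and then invokes Lemma \ref{lem:wedge}, which says that this intersection is precisely the line $\langle\Omega_{W(r)}\rangle$. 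You instead show directly that the image of $\delta(\xi)$ is the annihilator $L$ of $W(r)$: flatness of $Q$ under Gauss--Manin together with isotropy of $H^{1,0}$ makes $b(\alpha,\beta)=Q(\delta(\xi)\alpha,\beta)$ symmetric, so $Q(\xi\cdot\omega_j,\omega_i)=Q(\xi\cdot\omega_i,\omega_j)=0$ for $\omega_i\in W(r)$, and the two independent elements $\xi\cdot\omega_4,\xi\cdot\omega_5$ must span $L$. This self-adjointness is in fact already implicit in the paper, encoded in the Jacobian ring by the map $\rho:R^3\to \mathrm{Sym}^2R^4$ (equivalently, by commutativity: $Q(\xi\alpha,\beta)=\xi\alpha\beta=\xi\beta\alpha$), so your ``mild crux'' is sound and consistent with the text, just not the tool used at this point. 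What your route buys: it identifies $\mathrm{Im}\,\delta(\xi)=L$ as a structural fact, makes Lemma \ref{lem:wedge} unnecessary for this statement, and lets primitivity of $\Omega_\xi$ come for free from $\xi\cdot\theta=0$ rather than being needed to locate the line; you are also more explicit than the printed proof about nontriviality, observing that $\dim K_1(\xi)=3$ forces $\xi\cdot\omega_4,\xi\cdot\omega_5$ to be independent. What the paper's route buys is brevity, since Lemma \ref{lem:wedge} is already on hand and isolates the relevant linear algebra once and for all.
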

\begin{proof}
We complete a basis $\omega_1,\omega_2, \omega_3$ of $W(r)$ by  choosing   a convenient  basis for $\bar L,$ where $L \subset H^{0,1}(F)$ is the annihilator of $W.$ 
One has:
\[
\Omega_{\xi} =\xi\cdot(\xi \cdot \omega_1\wedge\omega_2\wedge\omega_3\wedge\omega_4\wedge\omega_5)=
2\omega_1\wedge\omega_2\wedge\omega_3\wedge\xi\omega_4\wedge\xi\omega_5.
\]
 Then $\Omega_{\xi} \in(\bigwedge^3 W\otimes H^{0,2}(JV))\cap  P^{3,2 }\, .$
From (\ref{lem:wedge}) we see that  $\langle\Omega_{\xi}\rangle=\langle \Omega_{W(r)}\rangle.$
\end{proof}

Given  $r$ as above   we define $\Phi_{\xi }:=\omega_1\wedge\omega_2 \wedge\omega_3 \, .$  Note that the distinguished  primitive decomposable form that appear in Lemma 
(\ref{linalg}) can be written
\[
\Omega_{\xi}= \omega_1\wedge\omega_2 \wedge\omega_3\wedge\overline{\omega_4}\wedge\overline{\omega_5}=\Phi_{\xi }\wedge\overline{\omega_4}\wedge\overline{\omega_5}.
\]

By recalling  that  $\xi\cdot \Phi_{\xi }=0$ we have:
\begin{lemma}
Let $\gamma:H^1(T_F)\otimes H^{3,2}(JV)\to H^{2,3}(JV)$ be the map defined in (\ref{functional}),
then  for all $\beta\in H^{0,2}(JV)$
\[
 \xi\otimes\Phi_{\xi }\wedge \beta \in \ker(\gamma).
\]
\end{lemma}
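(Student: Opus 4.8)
The plan is to unwind the definition of $\gamma$ by exploiting the fact that $JV$ is an abelian variety, so that both its cohomology ring and the entire variation of Hodge structure over $B$ are determined in weight one. First I would recall that $H^{5}(JV,\bC)=\bigwedge^{5}H^{1}(JV,\bC)$ as a variation of Hodge structure, so that the graded piece $\nabla^{2}_{3}$ of the Gauss--Manin connection on $\cH^{5}$ --- which is precisely the map $\gamma$ of (\ref{functional}) under the identifications $H^{3,2}(JV)=H^{2}(JV,\Omega^{3}_{JV})$ and $H^{2,3}(JV)=H^{3}(JV,\Omega^{2}_{JV})$ --- is the derivation obtained via the Leibniz rule from the graded piece $\overline{\nabla}\colon H^{1,0}(JV)\to H^{0,1}(JV)\otimes\Omega^{1}_{B}$ of the connection on $\cH^{1}$. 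This last map is nothing but the infinitesimal variation of the polarized weight-one Hodge structure of $JV$; under the isomorphisms of \S 3 (namely $H^{1,0}(JV)\cong R^{1}$, $H^{0,1}(JV)\cong R^{4}$ and $H^{1}(T_{F})\cong R^{3}$) it becomes the multiplication map $\delta\colon R^{3}\to \mathrm{Hom}(R^{1},R^{4})$, with $\delta(\xi)$ given by cup product with $\xi$.

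Next I would apply this to the specific element. Writing $\Phi_{\xi}\wedge\beta=\omega_{1}\wedge\omega_{2}\wedge\omega_{3}\wedge\beta$ with $\{\omega_{1},\omega_{2},\omega_{3}\}$ a basis of $W(r)$ and $\beta\in H^{0,2}(JV)=\bigwedge^{2}H^{0,1}(JV)$, the Leibniz rule expands $\gamma(\xi\otimes\Phi_{\xi}\wedge\beta)$ into a sum of terms of two kinds: those in which some factor $\omega_{i}$ is replaced by $\delta(\xi)\omega_{i}=\xi\cdot\omega_{i}$, plus those in which one of the anti-holomorphic factors of $\beta$ is differentiated. A quick bidegree count shows that the terms of the second kind land outside $H^{2,3}(JV)$, i.e.\ they are killed in the graded quotient $F^{2}/F^{3}$, and hence contribute nothing. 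Thus
\[
\gamma(\xi\otimes\Phi_{\xi}\wedge\beta)=\sum_{i=1}^{3}\omega_{1}\wedge\cdots\wedge(\xi\cdot\omega_{i})\wedge\cdots\wedge\omega_{3}\wedge\beta .
\]
Now the decisive input is that, by construction, $W(r)=K_{1}(\xi)=\ker(\,\cdot\,\xi\colon R^{1}\to R^{4})$, so that $\xi\cdot\omega_{i}=0$ for $i=1,2,3$; therefore each surviving summand vanishes and $\gamma(\xi\otimes\Phi_{\xi}\wedge\beta)=0$.

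The only delicate point --- the step I expect to be the main obstacle --- is the bookkeeping in the first paragraph: verifying carefully that under $H^{5}(JV)\cong\bigwedge^{5}H^{1}(JV)$ the relevant graded piece $\nabla^{2}_{3}$ of the Gauss--Manin connection is indeed computed factor by factor by the Leibniz rule, and that among the resulting contributions only the differentiation of holomorphic factors survives in $F^{2}/F^{3}$. This is a standard feature of the variation of Hodge structure of an abelian scheme, and once it is in place the conclusion is immediate from the defining property $\omega_{i}\in K_{1}(\xi)$.
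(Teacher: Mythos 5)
Your argument is correct and is essentially the paper's: the lemma is stated there as an immediate consequence of the remark that $\xi\cdot\Phi_{\xi}=0$, i.e.\ precisely the derivation/Leibniz-rule picture for the Gauss--Manin connection on $\bigwedge^{5}H^{1}(JV)$ together with $W(r)=K_{1}(\xi)$, which you spell out explicitly. The extra bookkeeping you do (checking that differentiating the anti-holomorphic factors dies in $F^{2}/F^{3}$) is accurate and just makes the paper's implicit step explicit.
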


Now we  evaluate the infinitesimal invariant functional on these tensors, i.e.  we
 compute
\[
\delta \nu  (\xi\otimes\Phi_{\xi }\wedge \beta).
\]
As it was explained above, our normal function is associated with the codimension $3$ cycle  $\cZ=\cF-\cF^-$ which lives   in  the Albanese fibration  $\cJ$ over some suitable   base $B. $ Fixing a point  $b\in B$  we write   $F:=\cF_b,\, JV:=\cJ_b$. Let then  $\xi \in H^1(F, T_F)$ be the class of the infinitesimal deformation
\begin{equation*}
 F_{\varepsilon}\lra Spec \, \mathbb C[\varepsilon]/(\varepsilon^2)
\end{equation*}
 corresponding to a local parameter $t$ on $B$ around $b$.

 In the present situation the main result of \cite{PiZu}  gives:
\begin{thm} \label {formula}
With the preceding    notations:
\begin{equation*}
\delta_{\nu}(b)(\xi\otimes \Phi_{\xi }\wedge \beta)
= 2 \int _F \omega_{\omega_1,\omega_2,\omega_3,\xi}\wedge a_b^\ast(\beta),
\end{equation*}
where $\omega_{\omega_1,\omega_2,\omega_3,\xi}$ is the standard representative (see (\ref{stand})) of the adjoint form defined above in section $\,4$ and $a_b$ is the Albanese map.
\end{thm}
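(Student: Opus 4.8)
The plan is to reduce Theorem \ref{formula} to the general adjoint-form computation of the infinitesimal invariant in \cite[Corollary 5.2.4]{PiZu}, by matching the abstract data there (an irregular surface in its Albanese variety, a first-order deformation, a triple of $1$-forms annihilated by the deformation) with the concrete data here ($X=F$, the cubic threefold deformation class $\xi\in R^3=H^1(F,T_F)$ which is of rank $2$, and the triple $\omega_1,\omega_2,\omega_3$ forming a basis of $W(r)=K_1(\xi)$). The first step is to make explicit the comparison between the normal function $\nu$ on the moduli space and the Abel-Jacobi invariant of the surface cycle $F-F^-$ inside the Albanese fibration $\cJ\to B$, using the identification $JV\cong Alb(F)$ and the splitting (\ref{deco4}) so that only the primitive piece $\nu^5$ survives; this identifies $\delta_\nu(b)$ with the infinitesimal invariant $\delta_{\cZ}$ of \cite{PiZu} for $\cZ=\cF-\cF^-$, up to the projection onto $P^{3,2}$.

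The second step is the linear-algebra translation of the tensor on which we evaluate. By the Lemma preceding the theorem, $\xi\otimes(\Phi_\xi\wedge\beta)$ lies in $\ker(\gamma)$, so it is a legitimate argument for the functional $\delta_\nu(b)$. I would then invoke Lemma \ref{linalg}: the distinguished decomposable primitive form attached to the rank-$2$ deformation is $\Omega_\xi=\Phi_\xi\wedge\overline{\omega_4}\wedge\overline{\omega_5}$ with $\Phi_\xi=\omega_1\wedge\omega_2\wedge\omega_3$, and $W(r)^2=\langle\omega_1\wedge\omega_2,\,\omega_2\wedge\omega_3,\,\omega_3\wedge\omega_1\rangle$. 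This is exactly the configuration in which the \cite{PiZu} formula expresses the pairing of $\delta_{\cZ}$ against such a tensor as $\int_F\omega_{\omega_1,\omega_2,\omega_3,\xi}\wedge a_b^\ast(\beta)$, where $\omega_{\omega_1,\omega_2,\omega_3,\xi}$ is the \emph{standard} (orthogonal-to-$W(r)^2$) representative of the adjoint class; the choice of standard representative is precisely what makes the integral independent of the ambiguity modulo $W(r)^2$, because $a_b^\ast(\beta)$ pairs with anti-holomorphic classes and $\int_F(\eta_i\wedge\eta_j)\wedge a_b^\ast(\beta)=0$ by type reasons combined with (\ref{stand}). The factor $2$ is the one already visible in the proof of Lemma \ref{linalg}, coming from $\xi\cdot(\xi\cdot\Omega)=2\,\omega_1\wedge\omega_2\wedge\omega_3\wedge\xi\omega_4\wedge\xi\omega_5$, i.e.\ from differentiating the quadratic expression $\Omega_\zeta=\zeta\cdot(\zeta\cdot\Omega)$ once in each factor.

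The third step is to verify that the general formula of \cite{PiZu}, which is stated for the normal function of a surface cycle $W_2-W_2^-$ type situation in an abelian-variety fibration, applies verbatim to $\cF-\cF^-$ in $\cJ$: one needs that $\cF$ is embedded (Clemens--Griffiths tangent bundle theorem, part b) of the quoted theorem), that the family is smooth over $B$, and that the cycle is fibrewise homologous to zero (the $(-1)$-involution acts trivially on $H^4$). With these hypotheses checked, \cite[Corollary 5.2.4]{PiZu} yields the stated equality directly, the adjoint form being computed on $F$ with respect to the canonical identification $H^0(F,\omega_F)\cong\bigwedge^2R^1$ and the Albanese map $a_b$ pulling back $(0,2)$-forms on $JV$ to $(0,2)$-forms on $F$.

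The main obstacle I expect is bookkeeping rather than conceptual: one must be careful that the ``standard representative'' convention in (\ref{stand}) is compatible with the normalization in \cite{PiZu}, and that the primitive projection performed in going from $\nu_r$ to $\nu^5$ does not introduce extra terms when paired with $\Phi_\xi\wedge\beta$ — this is where Proposition \ref{prop:vanishing_mu_3} and the decomposition (\ref{deco4}) must be used to kill the $\nu^1$ and $\nu^3$ contributions, so that the infinitesimal invariant of $\nu^5$ alone is being computed. Tracking the constant (the factor $2$) through the chain of identifications $H^1(T_V)\cong H^1(T_F)\cong R^3$ and the definition $\Omega_\zeta=\zeta\cdot(\zeta\cdot\Omega)$ is the other place where an error could creep in, but it is a finite check.
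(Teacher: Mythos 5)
Your proposal follows essentially the same route as the paper: the paper's entire proof of Theorem \ref{formula} is the citation of \cite[Corollary 5.2.4]{PiZu}, and your write-up simply makes explicit the dictionary (surface $F$ in its Albanese variety, rank-$2$ deformation $\xi$ with $K_1(\xi)=W(r)$, standard representative orthogonal to $W(r)^2$) under which that general formula applies, together with the hypothesis checks the citation subsumes. The only small caveat is that the factor $2$ is part of the normalization in the cited formula itself rather than coming from the identity $\xi\cdot(\xi\cdot\Omega)=2\,\omega_1\wedge\omega_2\wedge\omega_3\wedge\xi\omega_4\wedge\xi\omega_5$, since the theorem pairs against $\Phi_{\xi}\wedge\beta$ and not against $\Omega_{\xi}$.
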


\begin{proof}
\cite{PiZu}(Corollary 5.2.4).  \end{proof}

We are interested in the following straightforward consequence of the theorems \ref{adjoint_thm} and \ref{formula}.
We use the identifications between the Hodge components and the graded pieces of the Jacobian ring (section \S3):

\begin{prop}\label{corR} Let $r \in \Sigma $ and let $\xi \in R^3$ such that
 $K_1(\xi)=W(r)=\langle \omega_1,\omega_2,\omega_3 \rangle \subset R^1$.
 \begin{enumerate}
\item [a)] If $r\in F$ then for all $ \beta \in H^{2,0}(JV)$, $\delta\nu (\xi\otimes(\omega_1\wedge\omega_2\wedge\omega_3)\wedge \beta)=0.$
\item [b)] If $r\in \Sigma \setminus F$ then there exists $ \beta \in H^{2,0}(JV)$ such that $\delta\nu (\xi\otimes(\omega_1\wedge\omega_2\wedge\omega_3)\wedge \beta)\neq 0.$
\item [c)] If $r\in \Sigma\setminus F$ intersects in $3$ distinct points then $\delta\nu^5 (\xi \otimes\Omega_\xi)\neq 0$. Therefore $\delta\nu^5\neq 0.$
\end{enumerate}
\end{prop}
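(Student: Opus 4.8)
The plan is to derive all three statements from the explicit formula of Theorem \ref{formula}, namely $\delta_\nu(b)(\xi\otimes\Phi_\xi\wedge\beta)=2\int_F\omega_{\omega_1,\omega_2,\omega_3,\xi}\wedge a_b^\ast(\beta)$ for $\beta\in H^{0,2}(JV)$, together with Theorem \ref{dddouble_lines}, which says that the standard representative $\omega_{\omega_1,\omega_2,\omega_3,\xi}$ is zero exactly when $r\in F\cap\Sigma$. (In every case $\xi\otimes\Phi_\xi\wedge\beta$ lies in $\ker\gamma$, by the lemma preceding Theorem \ref{formula}, so the evaluations make sense.) Part (a) is then immediate: if $r\in F$ then $\omega_{\omega_1,\omega_2,\omega_3,\xi}=0$, so the right-hand side vanishes for every $\beta$. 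For part (b), if $r\in\Sigma\setminus F$ then $\omega_{\omega_1,\omega_2,\omega_3,\xi}\neq0$; since $\int_F\alpha\wedge\bar\alpha>0$ for every nonzero $\alpha\in H^{2,0}(F)$ and since $a_b^\ast\colon H^{0,2}(JV)\to H^{0,2}(F)$ is an isomorphism — this uses that $a_b^\ast$ is an isomorphism on $H^{0,1}$ (definition of the Albanese) and that $H^{0,2}(F)=\Lambda^2H^{0,1}(F)$, the conjugate of the Clemens--Griffiths/Tjurin identity $H^{2,0}(F)=\Lambda^2H^{1,0}(F)$ — one may take $\beta$ with $a_b^\ast(\beta)=\overline{\omega_{\omega_1,\omega_2,\omega_3,\xi}}$, which gives $\delta\nu(\xi\otimes\Phi_\xi\wedge\beta)=2\int_F\omega_{\omega_1,\omega_2,\omega_3,\xi}\wedge\overline{\omega_{\omega_1,\omega_2,\omega_3,\xi}}\neq0$.

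For part (c) the non-vanishing must be read off the \emph{given} tensor $\xi\otimes\Omega_\xi$, so a free choice of $\beta$ is not available. I would take $r\in\Sigma\setminus F$ with $l_r$ meeting $V$ in three distinct points $A,B,C$; such $r$ exist because $\dim\Sigma=3>2=\dim F$ and the special lines with a contact of order $\geq2$ with $V$ form a proper closed subset of $\Sigma$ (Remark after Proposition \ref{what}). By Proposition \ref{what}, working in the hyperplane basis $\omega_j\leftrightarrow\{z_{j-1}=0\}$, one has $\omega_{\omega_1,\omega_2,\omega_3,\xi}\equiv c\,(\omega_4\wedge\omega_5)\pmod{W(r)^2}$ with $c\neq0$, where $\omega_4,\omega_5$ are the forms attached to the tangent hyperplanes $T_A(V),T_B(V)$. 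On the other hand $\Omega_\xi$ equals, up to a nonzero constant, $\Phi_\xi\wedge\overline{\omega_4'}\wedge\overline{\omega_5'}$ for the polarization-orthonormal basis $\{\omega_1',\dots,\omega_5'\}$ of Lemma \ref{linalg}, with $W(r)=\langle\omega_1',\omega_2',\omega_3'\rangle$ and $\langle\overline{\omega_4'},\overline{\omega_5'}\rangle$ the annihilator of $W(r)$. Hence, by Theorem \ref{formula}, $\delta\nu^5(\xi\otimes\Omega_\xi)$ is a nonzero multiple of $\int_F\omega_{\omega_1,\omega_2,\omega_3,\xi}\wedge\overline{\omega_4'\wedge\omega_5'}$. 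A Gram-determinant computation — exactly as in the proof of Lemma \ref{lem:wedge}, using $\Theta=-i\sum_k\omega_k'\wedge\overline{\omega_k'}$ and the fact that $F$ has class $\theta^3/3!$ — shows that $\{\omega_i'\wedge\omega_j'\}_{i<j}$ is an orthogonal basis of $H^{2,0}(F)$ for the pairing $(\alpha,\beta)\mapsto\int_F\alpha\wedge\bar\beta$. Therefore $W(r)^2=\Lambda^2W(r)=\langle\omega_i'\wedge\omega_j':i,j\in\{1,2,3\}\rangle$ is orthogonal to $\omega_4'\wedge\omega_5'$, and since $W(r)^2$ does not depend on the chosen basis of $W(r)$, the $W(r)^2$-correction drops out of the integral, leaving $\int_F\omega_{\omega_1,\omega_2,\omega_3,\xi}\wedge\overline{\omega_4'\wedge\omega_5'}=c\int_F(\omega_4\wedge\omega_5)\wedge\overline{\omega_4'\wedge\omega_5'}$. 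This last integral is a positive multiple of the $\omega_4'\wedge\omega_5'$-coordinate of $\omega_4\wedge\omega_5$, equivalently of the determinant of the orthogonal projections of $\omega_4$ and $\omega_5$ to $W(r)^\perp=\langle\omega_4',\omega_5'\rangle$; it is nonzero because $\omega_4,\omega_5$ are linearly independent modulo $W(r)$, as $\{z_0,\dots,z_4\}=\{\omega_1,\dots,\omega_5\}$ is a basis of $R^1=H^{1,0}(F)$. Thus $\delta\nu^5(\xi\otimes\Omega_\xi)\neq0$, and a fortiori $\delta\nu^5\neq0$.

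I expect the only genuine difficulty to be in part (c), namely the bookkeeping that reconciles the ``hyperplane basis'' in which Proposition \ref{what} is phrased with the polarization-orthonormal basis in which $\Omega_\xi$ is written in Lemma \ref{linalg}. The argument works because of two structural facts: (i) $W(r)^2=\Lambda^2W(r)$ is intrinsic to $W(r)$, so the correction term of Proposition \ref{what}, however it is expressed, is killed by pairing against $\overline{\omega_4'\wedge\omega_5'}$; and (ii) the leading term survives precisely because $T_A(V)$ and $T_B(V)$ are not among, and are independent modulo, the hyperplanes through $l_r$. A subsidiary technical point to record carefully is the orthogonality and positivity of the Hodge--Riemann pairing induced on $H^{2,0}(F)\cong\Lambda^2H^{1,0}(F)$ by the polarization, and its compatibility with $a_b^\ast$; this is routine and modeled on the proof of Lemma \ref{lem:wedge}.
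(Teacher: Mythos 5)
Your proof is correct and follows essentially the same route as the paper: parts (a) and (b) are exactly the paper's argument via Theorem \ref{formula} and Theorem \ref{dddouble_lines}, and in part (c) the paper likewise combines Proposition \ref{what} with the diagonality of the pairing $\int_F\alpha\wedge\bar\beta$ in the polarization-orthonormal basis (writing $\eta_4\wedge\eta_5=\sum a_{ij}\omega_i\wedge\omega_j$ and observing $a_{4,5}\neq0$). The only difference is that you spell out the bookkeeping the paper leaves implicit — the basis change, the intrinsic nature of $W(r)^2$, and the Gram computation using $[F]=\theta^3/3!$ — which is fine.
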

\begin{proof}
As  usual   we identify $H^{i,j}(JV)$ with $H^{i,j}(F)$ when $0\leq i+j\leq 1$. Then we can rewrite the formula of the infinitesimal invariant in the form
\[
\delta{\nu}(\xi\otimes  (\omega_1\wedge\omega_2 \wedge\omega_3)\wedge   \beta)=2\int_F \omega_{\omega_1,\omega_2,\omega_3,\xi}\wedge \beta.
\]
From Theorem \ref{dddouble_lines} we know that the adjoint class vanishes exactly on the locus of the double lines.
\begin{enumerate}
\item [a)] If $r\in F,$  $\delta\nu (\xi\otimes(\omega_1\wedge\omega_2 \wedge\omega_3)\wedge \beta)=0.$
\item [b)] If $r\notin F$ then take $\beta= \overline{\omega_{\omega_1,\omega_2,\omega_3,\xi}}$ we get
$\delta(\xi\otimes (\omega_1\wedge\omega_2 \wedge\omega_3)\wedge   \beta)=-\int \beta\wedge\overline \beta\neq 0.$ 
\item [c)] If $r$ intersects transversally $V,$ by (\ref{what}) we have that 
$[\omega_{\omega_1,\omega_2,\omega_3,\xi}]$ is equivalent to $\eta_4\wedge \eta_5$ where 
$\{\omega_1,\omega_2,\omega_3,\eta_4,\eta_5\}$ is a basis of $H^{1,0}(F).$ Then we have
$$\eta_4\wedge\eta_5 =\sum_{ij}a_{i,j}\omega_i\wedge \omega_j$$ and we get that $a_{4,5}\neq 0.$
Now $\delta\nu^5( \xi\otimes\Omega_\xi) =\delta\nu(\xi\otimes\Omega_\xi)$ since the form is primitive. We compute 
\[ 
\delta\nu^5( \xi\otimes\Omega_\xi)= 2\int_F\eta_4 \wedge\eta_5\wedge \overline {\omega_4}\wedge\overline{ \omega_5}=2a_{4,5}\int_F\omega_4 \wedge\omega_5\wedge \overline {\omega_4}\wedge \overline{\omega_5}\neq 0.
\]
 \end{enumerate}
\end{proof}
\begin{rem} Consider the locus of special deformation
$ \Xi _2\subset \bP R^3$ (see (\ref{def_Xi}) for the definition) and let
$\pi: \cO(-1)\to  \Xi _2$ be the tautological line bundle  Set $U= \cO(-1)\setminus Z$ where $Z$ is the zero section,
$U$ is a fourfold and the map $\pi:U\to  \Xi _2$ is a $\bC^\ast$ bundle.
By fixing  a generator $\Omega$ of $H^{5,0}(A),$ we obtain a  holomorphic map
$f:U\to \bC$ defined by
$$f(\xi)=\delta \nu (\xi\otimes \Omega_\xi)=\delta \nu^5 (\xi\otimes \Omega_\xi).$$  We know from    \ref{corR}  a)
 that $f$ vanishes on $\pi^{-1}(\cD)$ where $\cD \subset F$ 
is  the curve of double  lines  of the Fano surface.  Now $\pi^{-1}(\cD)$ is a surface, but   $f$ must vanish on a divisor. The corresponding lines  are precisely the
non-transverse  ones by  \ref{corR} c ).   
\end{rem}

We can draw  the consequences  of our computation.
\begin{thm} \label{collez}
The following hold.
\begin{enumerate}
\item [a)] The Fano normal function $\nu$ determines $\cD$,  the curve  of   double lines  on  the cubic threefold.
\item [b)] The Fano normal function $\nu$ is not a torsion section.
\end{enumerate}
\end{thm}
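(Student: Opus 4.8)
The plan is to extract both statements directly from the infinitesimal computations already carried out, in particular from Proposition \ref{corR} and the remark that follows it. The key point is that the infinitesimal invariant $\delta\nu = \delta\nu^5$ has been computed explicitly enough to read off where it vanishes.

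For part b), I would argue as follows. By Proposition \ref{corR} c), whenever $r \in \Sigma \setminus F$ is a special line meeting $V$ in three distinct points, we have $\delta\nu^5(\xi \otimes \Omega_\xi) \neq 0$ for the corresponding rank-$2$ deformation $\xi \in R^3$. Such lines certainly exist on a general cubic threefold (the locus $\Sigma$ is a threefold, the locus $\Sigma \cap F = \cD$ of double lines is a curve, and the further locus of $3$-tangent special lines is a divisor in the $\mathbb C^*$-bundle $U$ of the remark, hence a proper subvariety), so $\delta\nu^5 \neq 0$ at a general point $b \in B$. Since the non-triviality of the infinitesimal invariant implies that the normal function is not a torsion section (as recalled at the end of \S\ref{infinv}), we conclude that $\nu = \nu^5$ is not of torsion. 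The only mild care needed is to make sure that ``not torsion as a section over $B$'' is the statement wanted; this is immediate because a torsion section would have torsion — hence $\nabla$-flat, hence trivial — infinitesimal invariant everywhere.

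For part a), the idea is that the holomorphic function $f\colon U \to \mathbb C$, $f(\xi) = \delta\nu^5(\xi \otimes \Omega_\xi)$ of the preceding remark is canonically attached to the normal function $\nu$ (it is built from $\delta\nu$ and a choice of generator $\Omega$ of $H^{5,0}$, which only rescales $f$), and its zero locus in $\mathbb P R^3$ is an explicitly identified subvariety. By Proposition \ref{corR} a) the function $f$ vanishes on $\pi^{-1}(\mathcal D')$, where $\mathcal D' = \Sigma \cap F \subset \Xi_2 \subset \mathbb P R^3$ is the image of the curve $\cD$ of double lines; and by part c) it is non-zero at the general point of $U$, so its zero divisor is a well-defined hypersurface in $\Xi_2$. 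The curve $\cD$ of double lines is recovered from this zero divisor as (the support of) its restriction to $F$ — equivalently, $\cD = \pi(Z(f)) \cap F$ inside the Grassmannian $\mathbb G$ — and this construction uses only $\nu$. Hence $\nu$ determines $\cD$.

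The main obstacle, and the only genuinely new input beyond the earlier sections, is the dimension bookkeeping needed to know that the vanishing locus of $f$ is a proper divisor in $\Xi_2$ and that it cuts out $\cD$ precisely on $F$ — i.e. that the three loci $\cD = \Sigma\cap F$, the $3$-tangent special lines, and a general special line are correctly nested ($\dim \Sigma = 3$, $\dim \cD = 1$). Most of this is already established in \S 3 (Lemmas \ref{rank_xi}, \ref{Caract_Sigma} and the subsequent dimension lemma) and in the remark after Proposition \ref{corR}; what remains is to assemble these facts and invoke Theorem \ref{dddouble_lines} to pin down the scheme-theoretic vanishing. The reconstruction of $V$ itself from $\cD$ is deferred to \S 6 (Theorem \ref{torelli}) and is not needed here.
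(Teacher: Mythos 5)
Your part b) is essentially the paper's own argument: Proposition \ref{corR} c) gives $\delta\nu^5\neq 0$ (and transverse special lines do exist, since $\Sigma$ is a threefold while $\cD$ and the locus of $3$-tangent special lines are of smaller dimension), and the non-triviality of the infinitesimal invariant rules out torsion, as recalled after its definition. That half is fine.

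Part a), however, has a genuine gap. Your reconstruction $\cD=\pi(Z(f))\cap F$ takes the Fano surface $F$ as known data, yet $F$ is not something handed to you by $\nu$: it is precisely the object one must not assume, since $F$ determines $V$ (Clemens--Griffiths) and hence $\cD$ directly, so with $F$ in hand the statement becomes circular and can no longer feed the Torelli-type Theorem \ref{torelli} (``the infinitesimal invariant determines $V$''). Nor can the intersection with $F$ be dropped: as the remark immediately preceding the theorem points out, the zero divisor of $f$ is strictly larger than $\pi^{-1}(\cD)$ --- it is the full locus of non-transverse special lines, a divisor in $\Xi_2$, of which $\cD$ is only a curve --- so $f$ alone does not single out $\cD$. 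The intended argument uses parts a) and b) of Proposition \ref{corR} together, i.e.\ the infinitesimal invariant as a functional in \emph{all} $\beta\in H^{2,0}(JV)$, equivalently the adjoint-class section of Remark \ref{adjoint_as_section} whose zero locus is computed in Theorem \ref{dddouble_lines}: for $r\in\Sigma$ with associated rank-two deformation $\xi$, one has $\delta\nu(\xi\otimes\Phi_{\xi}\wedge\beta)=0$ for every $\beta$ if and only if $r\in F$, i.e.\ if and only if $r\in\cD$. Since $\Sigma\cong\Xi_2\subset\bP R^3$ and the spaces $W(r)=K_1(\xi)$ are read off from the IVHS, and the functional is determined by $\delta\nu$, hence by $\nu$, this cuts out $\cD$ inside $\bG$ with no reference to $F$, which is what part a) asserts.
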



\subsection {Algebraic equivalence}
In the next section we will see that \ref{collez} i) yields  a Torelli-like  theorem.
Here we give our first application of the work done:

\begin{thm} \label{thm:alg_equiv}
The following hold.
\begin{enumerate}
 \item [a)] For a generic $V$  there is no proper map $\mu: W \to JV $ from a smooth variety $W$ of dimension $\leq 4$, for which  there is a cycle $Z$ homologous to $0$ in $W$    
with $\mu _{\ast} Z$ Abel-Jacobi equivalent to $F-F^-$.
 \item [b)] (Van der Geer-Kouvidakis,\cite{vdGK}) For such a $V$ the  
cycle $F-F^-$ is not algebraically equivalent to zero on $JV.$
\end{enumerate} 

\end{thm}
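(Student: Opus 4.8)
The plan is to deduce both statements from Theorem \ref{collez} b), that the Fano normal function $\nu=\nu^5$ is not a torsion section, combined with a monodromy/Hodge-theoretic argument showing that the existence of such a $W$ or such an algebraic equivalence would force $\nu$ to be torsion. I would run the argument in the contrapositive: suppose, for $V$ varying in a family $\cV\to B$ over an open set $B$ of the moduli space, that there is a smooth fourfold family $\cW\to B$ with a proper map $\mu:\cW\to\cJ$ over $B$ and a relative cycle $\cZ$, homologous to $0$ on each fibre $W_b$, with $\mu_*\cZ_b$ Abel-Jacobi equivalent to $F_b-F_b^-$ for all $b$. After a finite base change one can assume the relevant local systems are as needed; this does not affect the torsion property.

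The key point is that the Abel-Jacobi image of $\mu_*\cZ_b$ in $P^5(JV_b)$, being pushed forward from a cycle on a fourfold $W_b$, factors through a much smaller piece of cohomology. Concretely, $AJ(\mu_*\cZ_b)$ lies in the image of $J^{2p-1}(W_b)$ under $\mu_*$, and the relevant Hodge component $F^{6-p}H^{11-2p}$ that the normal function pairs against pulls back to $H^{*,*}(W_b)$ with $W_b$ of dimension $4$; so the part of $\nu$ detected lives in a sub-variation of Hodge structure of $\cJ^5$ that is "small" — in particular its transcendental part is a quotient of $R^5\pi_{\cW*}\bC$. The infinitesimal invariant $\delta_\nu$, which we computed in Proposition \ref{corR} c) to be nonzero using the full five-dimensional $H^{1,0}(F)$ and a transverse special line, cannot be supported on such a sub-VHS: one checks that the tensor $\xi\otimes\Omega_\xi$ on which $\delta_\nu$ is nonzero involves $\omega_1\wedge\omega_2\wedge\omega_3\wedge\overline{\omega_4}\wedge\overline{\omega_5}$ with all five basis vectors of $H^{1,0}(JV)$, and the monodromy of the family of cubic threefolds acts irreducibly on $H^3(V,\bQ)$ (this is classical, e.g. \cite{CG}), hence irreducibly on $P^5(JV,\bQ)=\bigwedge^5_{\mathrm{prim}}H^3(V,\bQ)$; an algebraic sub-VHS coming from a fixed-dimension-$4$ family would have to be a proper sub-local-system, contradicting irreducibility, unless the induced normal function is torsion. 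Therefore $\nu^5$ would be torsion, contradicting Theorem \ref{collez} b).

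For part b), algebraic equivalence of $F-F^-$ to zero on $JV$ means there is a curve $T$, a cycle $\cZ$ on $T\times JV$, and two points $t_0,t_1\in T$ with $\cZ_{t_1}-\cZ_{t_0}=F-F^-$; spreading this out over $B$ and resolving, one obtains exactly the situation of part a) with $W$ birational to $T\times JV$ — but one must be a little careful, since $\dim(T\times JV)=6>4$. The standard fix is that algebraic equivalence to zero of a cycle homologous to zero produces, after the usual reduction, a family of cycles on $JV$ parameterized by a \emph{curve}, and the resulting normal function factors through the intermediate Jacobian $J^1(T)$ of that curve — a piece whose associated VHS has weight-$1$ transcendental part, again too small to carry a nonzero $\delta_\nu$ by the same irreducibility argument. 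Alternatively one invokes directly the classical principle (Griffiths) that the Abel-Jacobi image of an algebraically-trivial family, as the parameter varies in moduli, gives a normal function whose infinitesimal invariant vanishes in the "primitive-of-top-weight" part; this is precisely where $\delta_{\nu^5}$ is nonzero.

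The main obstacle I expect is making precise the dimension bookkeeping in step a): one must show that $\mu_*$ from a $4$-dimensional $W_b$ genuinely cannot hit the relevant graded Hodge piece $H^{3,2}_{\mathrm{prim}}$ of $\bigwedge^5 H^3(V)$ nontrivially in a way compatible with the monodromy, i.e. that the "size" obstruction is real and not circumvented by $\mu$ being highly non-generic. This is handled by combining the explicit shape of $\Omega_\xi=\Phi_\xi\wedge\overline{\omega_4}\wedge\overline{\omega_5}$ from Lemma \ref{linalg} with the fact (Proposition \ref{corR} c)) that $\delta_{\nu^5}(\xi\otimes\Omega_\xi)=2a_{4,5}\int_F\omega_4\wedge\omega_5\wedge\overline{\omega_4}\wedge\overline{\omega_5}\neq 0$ genuinely uses a Hodge class of Hodge-theoretic "width" $5$, which no pushforward from a smooth fourfold can produce; once this is granted, irreducibility of the monodromy closes the argument.
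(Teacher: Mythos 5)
Your overall architecture is the paper's: spread the hypothetical $(W,Z,\mu)$ over a base via relative Hilbert schemes, note that the Abel--Jacobi image of $\mu_*Z_b$ is confined to the subtorus of $J^5(JV_b)$ attached to the sub-Hodge structure $\mathcal L_b=\mu_*H^{2p-1}(W_b,\bQ)\subset H^5(JV_b,\bQ)$, and conclude that this is incompatible with the nonvanishing results of Section 5. But the step you yourself flag as ``the main obstacle'' is the actual content, and your proposed way of closing it is circular: the shape of $\Omega_\xi$ in Lemma \ref{linalg} and the computation $\delta\nu^5(\xi\otimes\Omega_\xi)\neq 0$ of Proposition \ref{corR} c) do not show that ``no pushforward from a smooth fourfold can produce'' the relevant classes --- that is exactly what has to be proved, and a vague appeal to Hodge-theoretic ``width $5$'' does not prove it. The missing (elementary) point is: since $\dim W_b\leq 4$, a holomorphic $5$-form pulls back to $0$ on $W_b$, so $\langle\mu_*\alpha,\Omega\rangle=\langle\alpha,\mu^*\Omega\rangle=0$, i.e. $\mathcal L_b$ is orthogonal to $H^{5,0}(JV_b)$ for every $b$. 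With this in hand the paper finishes without any monodromy: $\mathcal L$ is preserved by the Gauss--Manin connection, so differentiating $\langle\lambda,\Omega\rangle\equiv 0$ twice in the direction $\xi$ shows $\Omega_\xi=\xi\cdot(\xi\cdot\Omega)$ is still orthogonal to $\mathcal L_b$; since a lift of $\nu$ may be taken with values in $\mathcal L$, this forces $\delta\nu(\xi\otimes\Omega_\xi)=0$, contradicting Proposition \ref{corR} c) directly (Theorem \ref{collez} b), which you aim at, is itself only a consequence of that proposition).

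Your alternative mechanism --- global monodromy irreducibility of $P^5(JV,\bQ)$ forcing the projected subsystem to vanish and hence $\nu^5\equiv 0$ --- can be made to work, but only after the orthogonality above is established (it is what shows the projection of $\mathcal L$ into the primitive part misses $P^{5,0}=H^{5,0}$ and is therefore a proper, monodromy-stable subsystem), and it has two further soft spots as written. First, ``irreducible on $H^3$ hence irreducible on $\bigwedge^5_{\mathrm{prim}}H^3$'' is a non sequitur: one needs the monodromy group to be Zariski-dense in the full symplectic group $Sp(10)$ (true for cubic threefolds), since irreducibility of the fifth fundamental representation is an $Sp(10)$-statement, and one must also check it survives the generically finite base change coming from the spreading-out. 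Second, the clause ``contradicting irreducibility, unless the induced normal function is torsion'' is not the correct case analysis: the projected subsystem is either $0$ (giving $\nu^5=0$ on a dense set, hence everywhere) or all of $P^5$ (excluded by the missing $(5,0)$ part); no ``torsion'' escape hatch occurs. Finally, for b) the paper's reduction is simpler than yours: take $W$ to be (a resolution of) the support of the algebraic-equivalence family over the parameter curve $T$, of dimension $3\leq 4$, with $Z$ the difference of the two fibres; then b) is an immediate corollary of a), with no need for the detour through $J^1(T)$ or a ``classical principle,'' which as phrased again leans on the unproved smallness claim.
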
 
\begin{proof} 
We work locally, so we will consider a Kuranishi family of cubic threefolds $\rho: \mathcal V \to B$. Let   $R^5 \rho_ *\mathbb C\otimes \cO_B= \mathcal H^5  \to B$ be the variation of polarized Hodge structures, with fibre $H^5(JV_b,\mathbb C)$ at $b$. Assume by contradiction that there exists $W$ as described in (a) for a generic $JV_b$. Via an argument using relative Hilbert schemes, cf.\cite[Remark 11.16]{Voisin2}, after shrinking $B$ if necessary, we can find   
a family $\rho':\mathcal W \to B$ with a compatible  map over $B$:  $\tilde \mu:\mathcal W \to \mathcal JV$. There is also a cycle $\mathcal Z$ on $\mathcal W$, flat over $B$ such that $AJ(\tilde \mu_{b *}(\mathcal Z_b))=AJ(F_b-F_b^-)$. The map $\tilde \mu$ defines for any $b$ a Hodge substructure and we can assume that this provides an inclusion of local systems $\mathcal L \subset \mathcal H^5$, where $\mathcal L=\tilde \mu_* R^5\rho'_{*}\mathbb C\otimes \cO _B$. Observe that $\mathcal L_b$ is orthogonal to $H^{5,0}(JV_b)$ in $H^5(JV_b,\mathbb C)$,$\,\forall b\in B$. We fix a generator $\Omega $ in $ H^{5,0}(JV_b)$. 
Since $\mathcal L$ in $\mathcal H^5$ is preserved by the Gauss-Manin connection, 
$\xi\otimes \xi \cdot \Omega=\Omega_\xi$ must be orthogonal to $\mathcal L_b.$ 
Because of our hypothesis,  we remark that  every computation one may need to deal with  in  regard to  $\nu$ and $\delta{\nu} $ can in fact be  performed   assuming that the lift of  $\nu$ takes value in  $\mathcal L$, but then
$\delta\nu(\xi\otimes\Omega_\xi)=0.$
This is the required contradiction with \ref{corR} i).  So we have proved (a), and then (b) is an immediate  corollary. 
\end{proof}

\begin{rem}
 Note that because the Hodge structure on $H^5(J(V),\mathbb Q)_{prim}$ is simple for very general
$V$ by monodromy arguments, the nonvanishing of the primitive Abel-Jacobi invariant also implies that the cycle $F-F^-$ is not homologous to $0$ in a divisor of $J(V)$, since otherwise its Abel-Jacobi invariant would
belong to a proper subtorus of $J^5(J(V))_{prim}$ corresponding to a sub-Hodge structure of level $3$.
\end{rem}

\begin{rem} \label{hain}
Turning to the case  of a curve $C$ consider the surface $C^{(2)}$ which is its  second  symmetric product, and specifically look at  its image,   the surface $W_2 \subset J(C)$.  
The same procedure as above constructs  a normal fuction $\gamma $ which splits as $\gamma ^3+\gamma ^5$ over the moduli space of Jacobian varieties;  it is associated with the  Abel-Jacobi class of the cycle  $W_2- W_2^-$. The results of Hain, Thm 7.13 in \cite{Hain} imply  $\gamma^5=0$ (but $\gamma^3$ is  not of torsion in general).  In genus $5$  one can prove   directly  that the preceding statement (a) fails for   $W_2 -W_2^{-}$. Indeed we show below that  up to a translation  the cycle is homologically equivalent to zero on a suitable theta divisor of $J(C)$. This is a contrast with our situation where $\nu^3$ is $0$ (see Lemma \ref{prop:vanishing_mu_3}) but $\nu^5 $ is not torsion. 
We find very interesting this  difference between the  two families of Abelian varieties of dimension $5$. Moreover the above result suggests that the theory of the representation for the {\em Torelli} group, of the orbifold fundamental group of the moduli space of the cubic threefolds should be very rich.
\end{rem} 
We restrict our attention to a a general curve $C$ of genus $5$,
and consider $C^{(4)}$, the standard symmetric product.
There is a rational involutive correspondence   $j: C^{(4)} \dasharrow C^{(4)}$
which maps an element $D\in C^{(4)}$  to  its Riemann-Roch adjoint $K_C -D$,
and therefore an involution $ \rho:  H^{*} (C^{(4)},\mathbb C )\to H^{*} (C^{(4)},\mathbb C ).$
By choosing a base point $p \in C$  we embed  $C^{(2)}$  in  $C^{(4)}$ and 
define $G(2):=  \overline { jC^{(2)}} $. With the notations of  \cite{ACGH} 
we consider the map $u: C^{(4)} \to J(C)$ and recall the result written there at p. 326:

   \begin{prop}  Let $C^1_4=\{D\in C^{(4)}\, \vert \, \dim \,\vert D\vert \ge 1 \}$. Then $-\theta x+ \theta^2 / 2  \, =[C^1_4 ] $ in $H^4(C^{(4)},\mathbb C).$
 \end{prop}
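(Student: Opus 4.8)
The plan is to realise $C^1_4$ as a degeneracy locus of the expected codimension, apply the Thom--Porteous formula, and then compute the Chern classes that appear by Grothendieck--Riemann--Roch on $C^{(4)}\times C$. I use the notation of \cite{ACGH}: on $C^{(4)}$ one has $x=[C^{(3)}]$ (the class of the divisor $D\mapsto D+p_0$, $p_0\in C$ fixed) and $\theta=u^*[\Theta]$; on $C^{(4)}\times C$, with projections $\pi$ onto $C^{(4)}$ and $q$ onto $C$, the universal effective divisor $\mathcal D$ has class $[\mathcal D]=x+\gamma+4\eta$, where $\eta\in H^2(C^{(4)}\times C)$ is the pull-back of the point class of $C$, $\gamma\in H^1(C^{(4)})\otimes H^1(C)$, and $\eta^2=0$, $\gamma\eta=0$, $\pi_*(\gamma^2)=-2\theta$; in particular $\pi_*([\mathcal D]^k)=4k\,x^{k-1}-k(k-1)\,x^{k-2}\theta$ and $\pi_*(\eta\,[\mathcal D]^k)=x^k$.

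By the geometric Riemann--Roch theorem, Serre duality, and the equality $\deg D=4=g-1$, a divisor $D\in C^{(4)}$ lies in $C^1_4$ exactly when the evaluation $H^0(C,K_C)\to H^0(C,K_C|_D)$ fails to be surjective, the cokernel of that map having dimension $\dim|D|$. Globalising, set $\mathcal N:=\pi_*(q^*K_C\otimes\mathcal O_{\mathcal D})$, which is locally free of rank $4$ since $\pi|_{\mathcal D}$ is finite and flat of degree $4$; the evaluation then becomes a morphism $\operatorname{ev}\colon H^0(C,K_C)\otimes\mathcal O_{C^{(4)}}\to\mathcal N$, and $C^1_4=\{\,\rk\operatorname{ev}\le 3\,\}$ is a degeneracy locus of expected codimension $(5-3)(4-3)=2$. (This presentation, available because $d=g-1$, has expected codimension equal to the actual one; presenting $C^1_4$ via $R\pi_*\mathcal O(\mathcal D)$ would give naive expected codimension $4$ instead, an excess-intersection situation.) For a general genus-$5$ curve $\rho(5,1,4)=1$ and $W^2_4=\emptyset$ (because $\rho(5,2,4)=-4<0$), so $C^1_4$ is smooth of pure dimension $2$ and $C^2_4$ is empty; hence Thom--Porteous applies verbatim and gives
\[
[C^1_4]=c_1(\mathcal N)^2-c_2(\mathcal N).
\]

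It remains to compute $c(\mathcal N)$. From $0\to\mathcal O(-\mathcal D)\to\mathcal O\to\mathcal O_{\mathcal D}\to 0$ we get $\ch(q^*K_C\otimes\mathcal O_{\mathcal D})=e^{8\eta}(1-e^{-[\mathcal D]})$, and $\td(T_\pi)=1-4\eta$; since $\eta^2=0$ the product of these equals $(1+4\eta)(1-e^{-[\mathcal D]})$. Grothendieck--Riemann--Roch for $\pi$ (with $R^1\pi_*=0$), combined with the push-forward formulas recalled above, gives after a short power-series computation
\[
\ch(\mathcal N)=\pi_*\bigl((1+4\eta)(1-e^{-[\mathcal D]})\bigr)=4+\theta\,e^{-x},
\]
so $c_1(\mathcal N)=\theta$ and $c_2(\mathcal N)=\tfrac12\bigl(c_1(\mathcal N)^2-2\,\ch_2(\mathcal N)\bigr)=\tfrac12\theta^2+\theta x$. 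Substituting, $[C^1_4]=\theta^2-\bigl(\tfrac12\theta^2+\theta x\bigr)=-\theta x+\tfrac12\theta^2$, which is the assertion.

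The step to watch is the verification that Thom--Porteous is applied in a range where it computes the honest fundamental class of $C^1_4$: one needs $C^1_4$ to have the expected codimension $2$, to be generically reduced, and $C^2_4$ to be empty, and this is exactly why $C$ is assumed general (Gieseker--Petri together with the Brill--Noether count). As an independent numerical check, pair $[C^1_4]=-\theta x+\tfrac12\theta^2$ with $1$, $x$ and $x^2$ and push forward along $u$: one finds $u_*[C^1_4]=0$, $u_*(x\cdot[C^1_4])=[W^1_4]=\tfrac1{12}\Theta^4$ (Poincar\'e's formula for $W^1_4$), and $\int_{C^{(4)}}x^2\cdot[C^1_4]=5$; this last integer equals the number of nodes of the plane sextic obtained by projecting the canonical model of $C$ in $\bP^4$ from a general bisecant line, so the three computations are mutually consistent.
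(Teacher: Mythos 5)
Your proposal is correct, and it is essentially the argument behind the result the paper simply quotes from \cite{ACGH} (p.~326): realizing $C^1_4$ as a rank-$\le 3$ degeneracy locus, applying Thom--Porteous, and computing the Chern classes on $C^{(4)}\times C$ via Grothendieck--Riemann--Roch is the standard ACGH computation, with your Serre-dual presentation $H^0(K_C)\otimes\cO\to\pi_*(q^*K_C\otimes\cO_{\mathcal D})$ equivalent to theirs, and your hypotheses (expected codimension, generic reducedness, $C^2_4=\emptyset$ for general $C$) are exactly what is needed and consistent with the paper's standing assumption that $C$ is general of genus $5$.
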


Here  $x$ is the class of $C^{(d-1)}$  in $C^{(d)}$, while $\theta$ is the restriction of the class of the $\Theta$ divisor on $JC$,
which turns out to be a translation of $W_4$, where we use the notation $W_d:= u(C^{(d)})$.
Now $u( G(2))$  is a translation of $W_2^ -.$    We have:
   \begin{prop}   The   class   
    $[W_2 - u( G(2))]$  
vanishes in  $H_4 (W_4,\mathbb C) .$
 \end{prop}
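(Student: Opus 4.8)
The plan is to transfer the whole question to the resolution $u\colon C^{(g-1)}=C^{(4)}\to W_4$ of the theta divisor and reduce it to an identity of tautological classes on $C^{(4)}$. For a general base point $p$, neither the embedded $C^{(2)}$ nor $G(2)$ is contained in the exceptional surface $C^1_4=u^{-1}(\operatorname{Sing}W_4)$, and $u$ is birational onto $W_4$; hence $[W_2]=u_*[C^{(2)}]=u_*x^2$ and $[u(G(2))]=u_*[G(2)]$. On the other hand $u$ contracts the surface $C^1_4$ onto the curve $W^1_4=\operatorname{Sing}W_4$, so $u_*[C^1_4]=0$. Therefore it suffices to prove the identity
\[
[G(2)]=x^2+[C^1_4]\qquad\text{in }H^4(C^{(4)},\bC),
\]
since then $[W_2]-[u(G(2))]=u_*\bigl(x^2-[G(2)]\bigr)=-u_*[C^1_4]=0$; here we also use the previous proposition, $[C^1_4]=\tfrac12\theta^2-\theta x$.

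To establish the identity I would start from the modular description of $G(2)$: a divisor $E\in C^{(4)}$ lies in $\overline{jC^{(2)}}=G(2)$ exactly when $h^0(K_C-E-2p)\ge1$, and since $\deg(E+2p)=g+1=6$ Riemann--Roch turns this into the condition $h^1(E+2p)\ge1$, i.e. that $E+2p$ lie on a hyperplane of the canonical model of $C$. In particular $[G(2)]$ is independent of $p$ up to homology and is the restriction of an algebraic class on the universal fourth symmetric product over $\mathcal M_g$, so it is monodromy invariant and therefore lies in the span of the tautological classes, $[G(2)]=a\,x^2+b\,\theta x+c\,\theta^2$. The coefficients follow from three intersection numbers on $C^{(4)}$, all obtained from $\int_{C^{(4)}}\theta^k x^{4-k}=g!/(g-k)!=1,5,20,60,120$ for $k=0,\dots,4$. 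First, $[G(2)]\cdot x^2=\#\bigl(G(2)\cap C^{(2)}\bigr)=6$: a point $D+2p$ of the embedded $C^{(2)}$ lies in $G(2)$ iff $D+4p$ lies on a hyperplane, and for general $p$ the fourth osculating space $\langle 4p\rangle$ is a $\mathbb P^3$, contained in a unique hyperplane $H_p$ with residual divisor $R_p:=H_p\cdot C-4p$ of degree $4$, so $D$ runs through the $\binom42=6$ pairs of points of $R_p$. Next, $[G(2)]\cdot\theta^2=\langle\theta^2,u_*[G(2)]\rangle=20$, because $u_*[G(2)]$ is a translate of $W_2^-$ in $JC$, hence has class $\theta^3/3!$ by Poincaré's formula, and $\langle\theta^2,\theta^3/3!\rangle=\tfrac16\int_{JC}\theta^5=20$. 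Finally, an analogous secant count (or a Grothendieck--Riemann--Roch computation for $R^1\pi_*\cO(\mathcal{D}+2p)$, with $\pi\colon C\times C^{(4)}\to C^{(4)}$ and $\mathcal{D}$ the universal divisor) gives $[G(2)]\cdot\theta x=15$. Solving $a+5b+20c=6$, $5a+20b+60c=15$, $20a+60b+120c=20$ yields $a=1$, $b=-1$, $c=\tfrac12$, that is $[G(2)]=x^2+\tfrac12\theta^2-\theta x=x^2+[C^1_4]$, as wanted.

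The step I expect to be the main obstacle is this last computation: one must check that $G(2)$, cut out as the jumping locus above, is reduced of pure dimension $2$ (so the three numbers really are intersection numbers), and one must evaluate $[G(2)]\cdot\theta x$, which is the delicate one — a secant-plane count on the canonical curve of genus $5$, or a Riemann--Roch/Thom--Porteous computation where some care with the excess is needed since the relevant degeneracy locus has codimension $2$ rather than the expected $3$. The other ingredients are standard for a general curve of genus $5$: that $u\colon C^{(g-1)}\to W_4$ is birational with exceptional set the irreducible surface $C^1_4$, a $\mathbb P^1$-bundle over the smooth irreducible Brill--Noether curve $W^1_4$ (where $\rho(5,1,4)=1$), and the formula $[C^1_4]=\tfrac12\theta^2-\theta x$, which is the preceding proposition.
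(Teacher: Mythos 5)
Your outer reduction coincides with the paper's: both arguments push forward along $u$, use that $u$ contracts $C^1_4$ to a curve so that $u_*[C^1_4]=0$ in $H_4(W_4)$, and thereby reduce everything to the single identity $[G(2)]-[C^{(2)}]=[C^1_4]=\tfrac12\theta^2-\theta x$ in $H^4(C^{(4)},\bC)$. Where you genuinely diverge is in how that identity is obtained: the paper simply quotes the Bressler--Brylinski formula for the class of the Riemann--Roch involution \cite{BB}, whereas you propose to re-derive it from monodromy invariance ($[G(2)]=ax^2+b\theta x+c\theta^2$) together with three intersection numbers. The skeleton of your version is sound: the Gram matrix of $x^2,\theta x,\theta^2$ on $C^{(4)}$ is nondegenerate, the count $[G(2)]\cdot x^2=6$ via the unique hyperplane through the osculating $\mathbb P^3$ at $p$ is correct (modulo the transversality you flag), and $[G(2)]\cdot\theta^2=20$ via the projection formula and Poincar\'e's formula for the translate of $W_2^-$ is solid; your numbers are indeed consistent with $(a,b,c)=(1,-1,\tfrac12)$.

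However, as written there is a genuine gap, and you name it yourself: the third number $[G(2)]\cdot\theta x=15$ is asserted, not proved, and it is precisely the nontrivial content that the paper imports from \cite{BB}; without it your linear system does not determine the class, so the proof does not close. (The reducedness/purity of the jumping locus and the transversality of the six-point intersection are also left unchecked, though those are routine for a general pointed curve.) A clean way to finish along your own lines is to avoid the secant count entirely: since $\deg(E+2p)=6=g+1$, Riemann--Roch gives $h^0(K_C-2p-E)\ge 1\iff h^0(E+2p)\ge 3$, so $G(2)$ is the preimage of $C^2_6$ under the map $C^{(4)}\to C^{(6)}$, $E\mapsto E+2p$, which pulls back $x$ and $\theta$ to $x$ and $\theta$; the class of $C^2_6$ (of the expected codimension $2$ for general $C$) is given by the Porteous-type formula for special-divisor loci in \cite{ACGH}, Ch.~VIII, and this yields $[G(2)]=x^2-\theta x+\tfrac12\theta^2$ directly, turning all three of your intersection numbers into corollaries rather than inputs. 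Alternatively, cite \cite{BB} as the paper does.
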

\begin{proof}
The map  $ u : C^{(4)} \to  W_4$  contracts the surface $C^1_4$ to a curve,
so that   $0 = u_{*} [C^1_4 ] $ in $ H_{4} (W_4).$
On the other  hand \cite{BB}  contains   the formula for the Riemann Roch involution $\rho$.
This was a result contributed by the first author. 
What it says in our case is that $[C^{(2)}]  -  [G(2)]  =      \theta x-  \theta^2 / 2 =  - [C^1_4 ]$.
\end{proof}

 \begin{prop}   The primitive Abel-Jacobi image   of 
    $W_2 - W_2^ {-}$   vanishes in $P^5 (J(C))$.
    \end{prop}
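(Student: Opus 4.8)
The plan is to reduce the statement to two ingredients that are already available: Proposition \ref{prop:translations}, which kills the primitive part of the Abel--Jacobi invariant of a cycle of class a multiple of $\theta^3$ minus a translate of it in a principally polarized abelian fivefold, and the homological identity just established on the symmetric product. First I would fix the base point $p\in C$ and work on the \emph{smooth} fourfold $C^{(4)}$, setting $\alpha:=C^{(2)}-G(2)+C^1_4$ (a codimension two cycle). The Riemann--Roch involution formula of \cite{BB} together with the expression $[C^1_4]=-\theta x+\theta^2/2$ recalled above gives $[C^{(2)}]-[G(2)]=-[C^1_4]$ in $H^4(C^{(4)},\bZ)$ (the cohomology of a symmetric product of a curve being torsion free), so $\alpha$ is homologically trivial on $C^{(4)}$ and its Abel--Jacobi class $AJ_{C^{(4)}}(\alpha)\in J^3(C^{(4)})$ is well defined.

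Next I would push $\alpha$ forward by $u\colon C^{(4)}\to J(C)$. Since $u$ contracts the surface $C^1_4$ onto the curve $W^1_4$, the cycle-theoretic pushforward $u_*C^1_4$ vanishes, whence $u_*\alpha=u_*(C^{(2)})-u(G(2))$, with $u_*(C^{(2)})$ a translate of $W_2$ and $u(G(2))$ a translate of $W_2^-$. Consequently $W_2-W_2^-$ is the sum of the three cycles $W_2-u_*(C^{(2)})$, $\ u_*\alpha$, and $u(G(2))-W_2^-$. The first and the third are each of the form (cycle of class $\theta^3/3!$) minus (a translate of it), so the argument of Proposition \ref{prop:translations}, which uses only that the ambient abelian variety is a ppav of dimension five, shows that their Abel--Jacobi invariants have zero component in $P^5(J(C))$.

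It then remains to treat the middle term. By functoriality of the Abel--Jacobi map under proper morphisms, $AJ_{J(C)}(u_*\alpha)=u_*\,AJ_{C^{(4)}}(\alpha)$ lies in the image of the Gysin map $u_*\colon J^3(C^{(4)})\to J^5(J(C))$. On cohomology $u_*\colon H^3(C^{(4)},\bQ)\to H^5(J(C),\bQ)$ is a morphism of Hodge structures of bidegree $(1,1)$, so its image $N$ satisfies $N^{5,0}=0$; thus $AJ_{J(C)}(u_*\alpha)$ sits in the subtorus attached to a sub-Hodge structure $N\subset H^5(J(C),\bQ)$ with $N^{5,0}=0$. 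For a general curve $C$ of genus $5$ the Hodge structure $P^5(J(C),\bQ)$ is simple (by the same monodromy argument invoked for cubic threefolds) and $P^{5,0}=H^{5,0}(J(C))\neq 0$, so the image of $N$ under the Lefschetz projection $H^5(J(C))\to P^5(J(C))$, being a sub-Hodge structure of $P^5(J(C))$ with vanishing $(5,0)$-part, is zero. Hence the primitive component of $AJ_{J(C)}(u_*\alpha)$ vanishes, and adding the three contributions proves the proposition.

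I expect the only delicate points to be bookkeeping ones: verifying that $u(G(2))$ is a translate of $W_2^-$ (and not of $W_2$) and that the three translations above combine to give exactly $W_2-W_2^-$, and correctly pinning down the degree shift in the Gysin map $u_*$ so that it lands in $J^5(J(C))$. The conceptual inputs — functoriality of Abel--Jacobi under proper pushforward, the $(1,1)$-bidegree of a Gysin morphism, the contraction of $C^1_4$, and the simplicity of $P^5(J(C),\bQ)$ — are all standard or already used in the paper, so the real work is keeping the translates and indices consistent rather than producing any new geometry.
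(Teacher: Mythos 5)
Your proposal is essentially the paper's own proof: the same decomposition of $W_2-W_2^-$ into translate differences (killed by Proposition \ref{prop:translations}, which indeed only uses that $J(C)$ is a principally polarized abelian fivefold) plus the piece $u_*\bigl(C^{(2)}-G(2)+C^1_4\bigr)$, a pushforward of a cycle homologically trivial on $C^{(4)}$, with $u_*C^1_4=0$ because $u$ contracts $C^1_4$ to a curve. Your simplicity argument for $P^5(J(C),\bQ)$ (valid for very general $C$, which suffices in the paper's setting of a general genus $5$ curve) just makes explicit the step the paper leaves implicit; alternatively one can avoid any generality assumption by noting via the projection formula that $u_*H^3(C^{(4)},\bQ)\subset \theta\, H^3(J(C),\bQ)+\theta^2 H^1(J(C),\bQ)$, whose primitive projection is zero for every curve.
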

    \begin{proof}
    This is  because  $W_2^ {-}$ and $G(2)$  are translations of each other,
   so that the primitive Abel-Jacobi image of their difference vanishes.
    Moreover  $W_2 - u( G(2))$  comes from a cycle   homologous to zero on $C^{(4)}$, and therefore 
    we also have for it the vanishing of the primitive Abel-Jacobi  image. 
    \end{proof}
\begin{rem}  An extension of the same   argument yields  a proof for  the fact 
that $C^{(2)}$ and   $G(2)$ are cohomologous  in $W_{g-1}$ for any $ g \geq 5$.
\end{rem}

\begin{rem}
Let $\mathcal M_g$ be the moduli space of curves of genus $g$ and $\mathcal H_g\subset \mathcal M_g$ be  the hyperelliptic locus.
The Ceresa normal
function vanishes on  $\mathcal H_g$, and according to a conjecture of Clemens it should be non zero everywhere on
 $ \mathcal M_g \setminus  \mathcal H_g.$
It is a difficult problem  to determine where the Fano normal function vanishes.
Here we give an answer to a simpler, but important, question of G. Pearlstein.
He asked, in analogy with the genus $3$ case, if there is a  divisor $\cD$ in the moduli space $\mathcal M$ of cubic threefolds where the Fano normal function vanishes. Our results implies that this divisor does not exist.
Indeed take $[V]\in \mathcal D\subset \mathcal M$ the general point of any divisor, and identify the tangent space $T'$ to $[V]$ at $\mathcal D$ with an hyperplane of $H^1(T_V).$  It follows that the projective space $\bP(T')$ intersects the threefold $\Sigma$ of the special deformations,  in a surface. Then we can find in $T'$ a special deformation that does not correspond to a double line of $F$. It follows that the infinitesimal invariant is not zero along $\cD$, therefore the normal function cannot vanish. On the other hand  let $\overline{\mathcal M}$ be the closure of $\mathcal M$ in $\mathcal A_5,$ the moduli space of principally polarized Abelian varieties. From \cite{collinoLNM} (see also \cite{casalaina}) we see that $\overline{\mathcal M}$ contains  the locus $H_5$ of hyperelliptic Jacobians. Note that  $H_5$ is a divisor of $\overline{\mathcal M}.$ The flat limits of the Fano surfaces in the intermediate jacobians when they approach an hyperelliptic Jacobian is the Abel-Jacobi image  $W_2$ of the $2-$symmetric product of
the hyperelliptic curve. This follows, for instance, from a theorem of Olivier Debarre \cite{Debarre}.
 In fact the homology class of the limit is a minimal class and Debarre prove that in a Jacobian the minimal classes are given only by the Abel-Jacobi image of symmetric products. It follows that the limits of the primitive normal function
is zero on $H_5.$ 
\end{rem}


\section{Recovering the cubic threefold from the infinitesimal invariant}

The goal of this section is to show how one can  reconstruct the threefold $V$ from the infinitesimal 
invariant studied above. We consider the  first   order deformations associated with the elements of $ \Sigma.$  Working as we did for  (\ref {corR}) we find that  the infinitesimal invariant provides a functional over $\Sigma$ which vanishes exactly where the adjoint class is trivial. In  (\ref{adjoint_as_section}) we noticed that the adjoint class can be considered as a section of a sheaf on $\Sigma \subset \mathbb G $ and we
proved in Theorem (\ref{dddouble_lines}) that this section vanishes exactly on $\cD=\Sigma \cap F$, the curve of double lines on $V$.
So the title of this section amounts to the statement  that  one can reconstructs  $V$ from  $ \cD $, or equivalently  the question  is whether the ruled surface $S_{\cD}$  covered by double lines is contained in a unique cubic threefold.  It is enough to verify that $S_{\cD}$   has degree larger than $9$.    Fano has studied this ruled surface in great detail, see \cite {Fano1}, and he determined all the relevant numerical data, for instance he proved that $S_{\cD}$ has   degree  $90$ in general. We need on the other hand to  check that the degree of  $S_{\cD}$ is always big enough to make it contained in only one cubic threefold.  To  this  aim  we prove first that every component of  $\Sigma \cap F$ is 
reduced (see (\ref{reduced}) below) and then  we find in (\ref{degree90}) that the surface covered by double lines has degree at least $15$. We expect, but do not need,  that indeed  it is always of degree $90$.

Consider the variety $K(V)$ which parametrizes the  set of  flags $(L, \pi)$, where $L$ is a double line on   $V$ and $\pi \cdot V = 2L + R\, .$  Next  lemma yields by a standard argument (we have in mind here the use of local coordinates) 
the   consequence  that $K(V)$  is a non singular curve at the points where $R\neq L$.  Moreover, since $K(V)$  is a fibre in a universal fibration, all such curves are in the same homology class.
By projection   we have the map   $K(V) \to \cD(V)$, which is in fact injective, because a  plane can cut  $V$ with multiplicity $2$ at most along   one line. This says that all curves  $\cD(V)$ are reduced, with the same homology class in $\mathbb G\, .$

We choose  coordinates such that $z_0=z_1=z_2=0$ is 
a double line of $V$ with residual line $z_0=z_1=z_3=0$. 
So the equation   of our fixed   chosen cubic threefold $V_0$ is of the form
\begin{equation*}
E_0 =  z_0 Q_0+z_1 Q_1+z_2^2z_3,
\end{equation*}
where $Q_0,Q_1 \in S^2$, the space of  homogeneous   polynomials of degree $2$  in $z_0,\dots,z_4$.

Consider   the map of affine spaces:
\begin{equation*}
\begin{aligned}
 \Phi: (S^1)^4\times (S^2)^2 & \lra S^3 \\
(L_0,L_1,L_2,L_3,A_0,A_1)&\mapsto L_0 A_0 + L_1 A_1 +L_2^2 L_3. 
\end{aligned}
\end{equation*} 
\begin{lem}Assume that $V_0$ is smooth. Then
 the differential   $d\Phi $ is surjective at $p_0:=(z_0,z_1,z_2,z_3,Q_0,Q_1).$
\end{lem}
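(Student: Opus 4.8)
The plan is to compute the image of $d\Phi$ at $p_0$ and show it is all of $S^3$. The differential is the linear map
\[
d\Phi_{p_0}(\ell_0,\ell_1,\ell_2,\ell_3,a_0,a_1)= \ell_0 Q_0+\ell_1 Q_1+z_0 a_0+z_1 a_1 + 2 z_2 z_3\,\ell_2 + z_2^2\,\ell_3,
\]
with $\ell_i\in S^1$, $a_j\in S^2$. So the image $I$ is the linear subspace
\[
I = S^1\cdot Q_0 + S^1\cdot Q_1 + z_0\cdot S^2 + z_1\cdot S^2 + z_2 z_3\cdot S^1 + z_2^2\cdot S^1 \subset S^3.
\]
First I would note that $z_0 S^2+z_1 S^2$ already contains every monomial of degree $3$ that is divisible by $z_0$ or $z_1$; hence it suffices to show that $I$ maps onto the quotient $S^3/(z_0 S^2+z_1 S^2)$, which is canonically $\bar S^3$, the degree-$3$ part of $\bar S:=\mathbb C[z_2,z_3,z_4]$. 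Reducing modulo $z_0,z_1$, write $\bar Q_i\in\bar S^2$ for the images of $Q_i$; then the image of $I$ in $\bar S^3$ is
\[
\bar I = \bar S^1\cdot\bar Q_0 + \bar S^1\cdot\bar Q_1 + \bar S^1\cdot(z_2 z_3) + \bar S^1\cdot z_2^2,
\]
and the goal becomes $\bar I=\bar S^3$, a statement purely about ternary cubics, where $\dim\bar S^3=10$.

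Next I would extract geometric meaning from the smoothness of $V_0$. Since $\partial E_0/\partial z_4 = z_0\,\partial Q_0/\partial z_4 + z_1\,\partial Q_1/\partial z_4$, smoothness of $V_0$ along the residual line $z_0=z_1=z_3=0$ and along the double line $z_0=z_1=z_2=0$ forces genericity conditions on $Q_0,Q_1$; concretely, the common zero locus in $\mathbb P^4$ of $z_0,z_1$ and the partials of $E_0$ must be empty, which translates into the statement that $\bar Q_0,\bar Q_1$ have no common zero on the line $z_2=0$ in $\mathbb P(\bar S^{1*})=\mathbb P^2$ other than what is already excluded, and in particular that the pencil $\langle \bar Q_0,\bar Q_1\rangle$ together with $z_2 z_3, z_2^2$ has base locus of the expected dimension. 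I would then argue directly: the four quadrics $\bar Q_0,\bar Q_1, z_2 z_3, z_2^2$ have no common zero in $\mathbb P^2$ (a common zero would be a singular point of $V_0$), so they form a regular-type system, and multiplying a $4$-dimensional space of quadrics with no common zero by the $3$-dimensional space $\bar S^1$ of linear forms yields all of $\bar S^3$; this is a small explicit computation, e.g. by choosing coordinates so that $z_2 z_3$ and $z_2^2$ already produce all monomials divisible by $z_2$, reducing to showing $\bar Q_0,\bar Q_1$ generate the remaining quotient $\mathbb C[z_3,z_4]_3$ modulo $z_2$, which again follows from $\bar Q_0|_{z_2=0}$ and $\bar Q_1|_{z_2=0}$ having no common root in $\mathbb P^1$.

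The main obstacle I anticipate is correctly pinning down exactly which nondegeneracy of $Q_0,Q_1$ follows from smoothness of $V_0$ and verifying it is precisely what the surjectivity argument needs; the bookkeeping of the Jacobian ideal restricted to the two distinguished lines is where one must be careful, since a naive count leaves little room. Once that is in place, the surjectivity onto $\bar S^3$ is a routine monomial chase: $z_2^2 S^1$ and $z_2 z_3 S^1$ supply all degree-$3$ monomials divisible by $z_2$ except possibly the single monomial $z_2 z_4^2$ (handled by $z_2 z_3 S^1$ still missing it, so one uses $\bar Q_i$), and the two quadrics $\bar Q_i$ restricted to $z_2=0$, having no common zero on $\mathbb P^1$, multiply with $\{z_3,z_4\}$ to give all of $\mathbb C[z_3,z_4]_3$; combining with the already-covered directions in degree $3$ and lifting back through $z_0 S^2 + z_1 S^2$ yields $I=S^3$. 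Hence $d\Phi_{p_0}$ is surjective.
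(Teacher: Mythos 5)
Your proposal follows the paper's own reduction almost verbatim: compute $d\Phi_{p_0}$, absorb everything divisible by $z_0$ or $z_1$ into $z_0S^2+z_1S^2$, and translate smoothness of $V_0$ into the statement that the four ternary quadrics $\bar Q_0,\bar Q_1,z_2z_3,z_2^2$ have no common zero in the plane $z_0=z_1=0$ (correct: such a point would be a singular point of $V_0$ on the line $z_0=z_1=z_2=0$; note the relevant singularities sit on the double line, not the residual one, and this single precise statement is all you need in place of the hedged ``expected dimension'' sentence). Where you genuinely diverge is the final algebraic step. The paper takes the complete intersection ideal $(\bar Q_0,\bar Q_1,z_2^2)$, uses Macaulay duality for the Artinian Gorenstein quotient together with the fact that $z_2z_3$ is not in the span of those three quadrics, and gets the last dimension abstractly. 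You instead propose an explicit monomial argument. Your Sylvester-type step is fine: since $\bar Q_0|_{z_2=0}$ and $\bar Q_1|_{z_2=0}$ are coprime binary quadrics, $(a,b)\mapsto a\bar Q_0+b\bar Q_1$ is injective on pairs of linear forms in $z_3,z_4$, so $\bar I$ surjects onto $\bar S^3/z_2\bar S^2\cong\mathbb C[z_3,z_4]_3$. But, as you yourself note, $z_2^2\bar S^1+z_2z_3\bar S^1$ supplies only five of the six monomials divisible by $z_2$, and the parenthetical ``so one uses $\bar Q_i$'' is the one step you never actually perform: as written your argument only yields $\dim\bar I\ge 9$, not $10$. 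Also, the blanket claim that a $4$-dimensional base-point-free space of ternary quadrics times $\bar S^1$ equals $\bar S^3$ is true but not a triviality; its natural proof is precisely the paper's Macaulay argument, so it cannot be invoked as obvious.

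The gap closes in one line, using your no-common-zero hypothesis at the single point $z_2=z_3=0$: since $z_2^2$ and $z_2z_3$ vanish there, smoothness forces that $\bar Q_0,\bar Q_1$ do not both vanish there, i.e.\ some $\bar Q_i$ has nonzero coefficient $e_i$ of $z_4^2$. Then $z_2\bar Q_i\equiv e_i\,z_2z_4^2$ modulo $z_2^2\bar S^1+z_2z_3\bar S^1$, which produces the missing monomial, hence $z_2\bar S^2\subset\bar I$; combined with the surjection onto $\mathbb C[z_3,z_4]_3$ this gives $\bar I=\bar S^3$, and lifting through $z_0S^2+z_1S^2$ finishes the proof. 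With that line added your argument is complete and is more elementary than the paper's, replacing Gorenstein duality by an explicit coordinate computation; the trade-off is that it is tied to the chosen normal form, whereas the paper's duality argument explains structurally why any quadric outside the complete intersection must supply the last dimension.
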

\begin{proof}
We compute $d\Phi $ at $p_0$ in the obvious way.

Let $u=(L_0,L_1,L_2,L_3,A_0,A_1)\in T_{p_0}((S^1)^4\times (S^2)^2)$, then
\begin{equation*}
 \frac {d\Phi (p_0+\varepsilon u)}{d \varepsilon}(0)=z_0A_0+L_0Q_0+z_1A_1+L_1Q_1+z_2^2L_3+2z_2z_3L_2 \in S^3 = T_{E_0}(S^3).
\end{equation*}
We need to show that any element in $S^3$ is of the form
$$z_0A_0+L_0Q_0+z_1A_1+L_1Q_1+z_2^2L_3+2z_2z_3L_2$$ 
when the $L_i$'s move  in $S^1$ and $A_i$ in $S^2$. Due to the summands $z_0A_0+z_1A_1$ we can reduce modulo $z_0$ and $z_1$.
Denote by  $C_i=Q_i(0,0,z_2,z_3,z_4),i=0,1$ the corresponding conics. We are reduced to prove that the combinations
$$
M_0C_0+M_1C_1+M_3z_2^2+M_4z_2z_3
$$
 run over all $\mathbb C[z_2,z_3,z_4]_3$ when the $M_i$'s are linear forms in the variables $z_2,z_3,z_4$.

We claim that:
\begin{enumerate}
 \item [a)] There is no common solution of the three quadratic equations $C_0,C_1,z_2^2$
 \item [b)] The conic $z_2z_3$ is linearly independent of $C_0,C_1,z^2_2$.
\end{enumerate}
The claim is a consequence of the smoothness of the given cubic threefold. Indeed, by computing the $5$ partial derivatives of $E_0$
and reducing modulo $z_0,z_1$ we get the system:
$$
C_0=C_1=z_2^2=z_2z_3=0.
$$ 
A solution of $C_0=C_1=z_2^2=0$ would give a singularity of the threefold over the line $z_0=z_1=z_2=0.$
On the other hand a linear combination 
$$
z_2z_3=\alpha z_2^2+\beta C_0+\gamma C_1
$$
would imply the existence of a denerate conic of the form $z_2\cdot (a_3z_3+a_4z_4+a_5z_5)$ in the linear pencil
generated by $C_0,C_1$. Again this produces a singularity over the line $z_0=z_1=z_2=0.$

Part a) of the claim allows to use Macaulay's Theorem for the ideal sheaf $I=\langle C_0,C_1,z_2^2\rangle \subset \mathbb C[z_2,z_3,z_4]$. Put $R_I=\mathbb C[z_2,z_3,z_4]/I$.
Then $R_I^3$ has dimension $1$ and there are perfect pairings:
$$
R_I^i\otimes R_I^{3-i}\lra R_I^3=\mathbb C 
$$ 
(cf. for instance, section (6.2.2) in \cite{Voisin3}). Part b) of the claim says that $z_2z_3$ is not zero in $R_I^2$, thus $z_2z_3 \cdot R_I^1=R_I^3$
and we are done. 
\end{proof}

\begin{cor}\label{reduced}
 Every component of $\cD(V) $ is reduced if  $V $  is non singular. \end{cor}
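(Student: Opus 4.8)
The plan is to deduce the Corollary from the preceding Lemma by way of the incidence variety $K(V)$ of flags $(L,\pi)$ introduced above and the projection $p\colon K(V)\to\cD(V)$, $(L,\pi)\mapsto L$.

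The first and main step is to identify $K(V)$ locally with a quotient of a fibre of $\Phi$. Fix a smooth $V$ and a flag $(L,\pi)\in K(V)$ whose residual line $R$ is distinct from $L$; choosing coordinates so that $L=\{z_0=z_1=z_2=0\}$, $R=\{z_0=z_1=z_3=0\}$ and $\pi=\{z_0=z_1=0\}$, the equation of $V$ takes the normal form $E_0=z_0Q_0+z_1Q_1+z_2^2z_3$ of the Lemma, with $p_0=(z_0,z_1,z_2,z_3,Q_0,Q_1)$. The assignment $(L_0,L_1,L_2,L_3,A_0,A_1)\mapsto\bigl(\{L_0=L_1=L_2=0\},\{L_0=L_1=0\}\bigr)$ maps a neighbourhood of $p_0$ in $\Phi^{-1}(E_0)$ onto a neighbourhood of $(L,\pi)$ in $K(V)$, and realizes the former as a principal bundle over the latter under the $14$-dimensional group $G$ that acts freely (near $p_0$) by reparametrising the datum $(L_0,L_1,L_2,L_3,A_0,A_1)$ without changing $L_0A_0+L_1A_1+L_2^2L_3$: the copy of $GL_2$ acting on $(L_0,L_1)$ and inversely on $(A_0,A_1)$; the Koszul translations $(A_0,A_1)\mapsto(A_0+L_1B,A_1-L_0B)$ with $B\in S^1$; the scaling $(L_2,L_3)\mapsto(tL_2,t^{-2}L_3)$; and the translations of $L_2$ and of $L_3$ by linear combinations of $L_0,L_1$, compensated inside $A_0,A_1$. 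By the Lemma $\Phi$ is submersive at $p_0$, hence --- being equivariant for the trivial $G$-action on $S^3$ --- at every point of the orbit $G\cdot p_0$; thus $\Phi^{-1}(E_0)$ is smooth near $p_0$, and since $G$ acts freely its quotient, an open subset of $K(V)$, is smooth at $(L,\pi)$. As a consistency check, $\dim\bigl((S^1)^4\times(S^2)^2\bigr)-\dim S^3=50-35=15$ and $15-\dim G=1=\dim\cD(V)$. As the flag with $R\neq L$ was arbitrary, $K(V)$ is smooth along the open set $U$ of flags whose residual line differs from $L$.

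Next I would transfer this to $\cD(V)$. The map $p$ is injective: a smooth cubic threefold contains no $2$-plane, so the plane tangent to $V$ along a double line $L$ is unique --- two distinct such planes through $L$ would span a $\mathbb P^3$ meeting $V$ in a cubic surface singular along $L$, which forces $V$ to contain a plane or a quadric. This unique tangent plane depends algebraically on $L$, so $p$ admits an algebraic section over the open image $p(U)$ and hence restricts to an isomorphism of $U$ onto $p(U)$. Since $\cD(V)$ is a curve, $\dim K(V)=1$ and $K(V)\setminus U$, where $R=L$, is at most a finite set of points on each component --- classically the lines of higher contact with the Hessian of $V$; if one wishes, these are handled by the same computation with the normal form $E_0=z_0Q_0+z_1Q_1+z_2^3$. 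Thus every component of $K(V)$ is a reduced curve, and through the bijection $p$, which is an isomorphism over the dense open $U$, each such component is generically identified with a component of $\cD(V)$; as $\cD(V)$ is a curve without embedded components, it is then reduced.

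I expect the main obstacle to be the bookkeeping in the first step: realizing $K(V)$ locally as $\Phi^{-1}(E_0)/G$ and pinning $\dim G$ at exactly $14$ (equivalently, verifying that the listed reparametrisations are complete and that the action is free), so that submersivity of $\Phi$ at the single point $p_0$ yields smoothness of a one-dimensional $K(V)$. A secondary point deserving care is the locus $R=L$, where the chosen normal form --- and with it the argument --- degenerates, together with the comparison of scheme structures along $p$ there.
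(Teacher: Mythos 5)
Your overall route is the same as the paper's: you deduce the corollary from the submersivity Lemma by the ``standard argument in local coordinates'' that the paper alludes to, realizing $K(V)$ near a flag with $R\neq L$ as the quotient of the fibre $\Phi^{-1}(E_0)$ (smooth of dimension $50-35=15$ by the Lemma) by the $14$-dimensional reparametrisation group, and then transferring reducedness to $\cD(V)$ through the injective projection $(L,\pi)\mapsto L$. That part of the bookkeeping is sound: the listed reparametrisations are complete (the Koszul syzygies of the regular sequence $L_0,L_1$ account for the ambiguity in $(A_0,A_1)$, and $L_2$ is a nonzerodivisor mod $(L_0,L_1)$), the action is free near $p_0$, and $15-14=1$ gives the nonsingular curve.

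Two of your justifications, however, would fail as written. First, uniqueness of the tangent plane: it is not true that a cubic surface singular along a line forces $V$ to contain a plane or a quadric --- $z_0^2z_2+z_1^2z_3$ is an irreducible cubic surface in $\bP^3$ singular along $z_0=z_1=0$ and contains no plane. The correct argument is that if $\pi_1\neq\pi_2$ both cut $2L+R_i$, then $S=\langle\pi_1,\pi_2\rangle\cap V$ is singular along $L$ (at a smooth point $x\in L$ of $S$ both planes would have to equal $T_xS$), hence $T_xV=\langle\pi_1,\pi_2\rangle$ for every $x\in L$; but then the five partials $\partial E/\partial z_i$ restrict to $L$ as proportional binary quadrics, and their common zero would be a singular point of $V$ --- equivalently, the dual map would be constant on $L$, contradicting Remark \ref{dual}. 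Second, the flags with $R=L$ cannot be ``handled by the same computation'' with the normal form $E_0=z_0Q_0+z_1Q_1+z_2^3$: there the image of the differential, reduced mod $(z_0,z_1)$, is exactly the degree-$3$ part of the ideal $(C_0,C_1,z_2^2)\subset\mathbb C[z_2,z_3,z_4]$, and when these conics have no common zero the quotient ring is Gorenstein with one-dimensional socle in degree $3$, so the analogous map is never submersive (triple lines are a codimension-one phenomenon in moduli, so no transversality statement of this kind can hold). What your argument actually needs is only that no component of $\cD(V)$ consists entirely of flags with $R=L$, since generic reducedness along each component suffices ($\cD\in|2K_F|$ has no embedded points); this finiteness is asserted in your proposal but not proved, and your suggested patch does not establish it, so it remains the one genuine gap to close (the paper leaves the same point implicit).
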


Consider now the incidence variety
\begin{equation*}
 T=\{(l,x)\in \bG \times \bP^4 \,\vert \, x\in l\},
\end{equation*}
with its  projections $\pi_1,\pi_2$. Given  a curve   $X\subset \bG$
we denote by $S_X\subset \bP ^4$ the surface covered by the lines parameterized by $X$.
In other words $S_X=\pi_2(\pi_1^{-1}(X))$. Standard techniques from  intersection theory, \cite{Fulton}, 
give:

\begin{lem}
 The degree of the cycle $\pi_{2 \ast}(\pi_1^{\ast}(X))$ is equal to  $d(X)$,  the degree of $X \subset  \bG \subset \bP^9$. The degree of  $S_X$ is then found by dividing $d(X)$  by the degree of the map $
 \pi_1^{-1}(X) \to  S_X$.
\end{lem}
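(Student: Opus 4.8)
The plan is to derive both statements simultaneously from one count on the incidence variety $T$, carried out by intersecting with a general $2$-plane. First I would recall that the degree of the effective $2$-cycle $\pi_{2\ast}(\pi_1^{\ast}(X))$ in $\bP^4$ is the length of its intersection with a general $\Lambda\cong\bP^2\subset\bP^4$; since $\pi_1$ is a flat $\bP^1$-bundle, $\pi_1^{\ast}(X)=[\pi_1^{-1}(X)]$, and the projection formula identifies that length with
\[
N:=\#\bigl(\pi_1^{-1}(X)\cap\pi_2^{-1}(\Lambda)\bigr)=\#\{(l,x)\in T\,\vert\,l\in X,\ x\in l\cap\Lambda\},
\]
the intersection being transverse and $0$-dimensional once $\Lambda$ is moved generically (Bertini, or Kleiman transversality for the transitive $PGL_5$-action on planes, noting that $\pi_1,\pi_2$ are equivariant). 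So the lemma reduces to evaluating $N$ by projecting the pairs $(l,x)$ to the two factors.

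Projecting $(l,x)\mapsto l$: a line $l\in X$ contributes to $N$ exactly when $l\cap\Lambda\neq\emptyset$, and then it contributes the single point $l\cap\Lambda$; hence $N$ is the number of lines of $X$ that meet $\Lambda$. Here I would invoke the classical fact, recalled in the Notation section, that the locus of lines meeting a fixed plane in $\bP^4$ is precisely a hyperplane section of $\bG$ in its Plücker embedding $\bG\subset\bP^9$. For $\Lambda$ general this hyperplane meets the curve $X$ transversally, so $N=\deg(X\subset\bP^9)=d(X)$, which is the first assertion. Projecting instead $(l,x)\mapsto x$: the image is $S_X\cap\Lambda$, a set of $\deg S_X$ reduced points for $\Lambda$ general (in particular $S_X$ is genuinely a surface, for otherwise a general $\Lambda$ would miss it and force $N=0\neq d(X)$), and the fibre over a general point $y$ of this set is the set of lines of $X$ through $y$, of cardinality equal to the degree $e$ of the generically finite map $\pi_1^{-1}(X)\to S_X$. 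Thus $N=e\cdot\deg S_X$, and comparing with $N=d(X)$ gives $\deg S_X=d(X)/e$.

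An alternative, more ``intersection-theoretic'' route (closer to the reference to \cite{Fulton}) is to observe that $\pi_1\colon T\to\bG$ is the projectivized tautological rank $2$ subbundle and that $\pi_2^{\ast}\cO_{\bP^4}(1)$ is its relative $\cO(1)$; the Grothendieck relation for this $\bP^1$-bundle then gives $\pi_{1\ast}\bigl(\pi_2^{\ast}H^2\bigr)=\sigma_1$, the Plücker class, whence $\deg\pi_{2\ast}(\pi_1^{\ast}X)=\int_T\pi_1^{\ast}[X]\cup\pi_2^{\ast}H^2=\int_{\bG}[X]\cup\sigma_1=d(X)$, and the second statement again follows from $\pi_{2\ast}[\pi_1^{-1}(X)]=e\,[S_X]$.

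I do not expect a genuine obstacle: the content is formal once one has the identification of the Schubert divisor of lines meeting a plane with a Plücker hyperplane. The only points needing a little care are the genericity bookkeeping—transversality of $\Lambda$ (equivalently of the Schubert hyperplane) with $X$ and with $S_X$, reducedness of the resulting $0$-cycles, and, when $X$ is reducible, running the count component by component—all of which is standard.
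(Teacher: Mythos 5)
Your argument is correct, and it is exactly the standard intersection-theoretic computation the paper appeals to without writing it out (the paper simply cites ``standard techniques from intersection theory, \cite{Fulton}''): the projection formula on the incidence variety $T$, together with the identification of the locus of lines meeting a plane with a Pl\"ucker hyperplane section (equivalently, $\pi_{1\ast}\pi_2^{\ast}H^2=\sigma_1$ for the $\bP^1$-bundle $\pi_1$), plus the relation $\pi_{2\ast}[\pi_1^{-1}(X)]=e\,[S_X]$. Your genericity bookkeeping (Kleiman transversality, a general plane missing the bad locus of the map to $S_X$) fills in precisely the details the paper leaves implicit, so there is nothing to add.
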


\begin{lem}\label{degree90}
The ruled  surface $S_{\cD}\subset \bP^4$   of double lines  is always  of  degree at least $15$.  
\end{lem}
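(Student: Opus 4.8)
The plan is to read off $\deg S_{\cD}$ from the two intersection-theoretic lemmas just proved, using only a crude estimate for the ruling. By the second of those lemmas one has $\deg S_{\cD} = d(\cD)/e$, where $d(\cD)$ is the degree of $\cD$ in the Pl\"ucker space $\bG\subset\bP^9$ and $e$ is the degree of the generically finite map $\pi_1^{-1}(\cD)\to S_{\cD}$, i.e.\ the number of double lines of $V$ passing through a general point of $S_{\cD}$. (Here $S_{\cD}$ is genuinely a surface and $e$ a well-defined positive integer: $\pi_1^{-1}(\cD)$ is a $\bP^1$-bundle over the curve $\cD$, hence two-dimensional, and its image cannot be a curve since the double lines are not all equal.) It is therefore enough to prove that $d(\cD)=90$ for every smooth $V$ and that $e\le 6$.

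For $d(\cD)$ I would use the discussion preceding the Lemma. The flags $(L,\pi)$ sweep out the curves $K(V)$, which are fibres of a universal fibration over an open set of the moduli space of cubic threefolds and hence all have the same homology class in $\bG$; the projection $K(V)\to\cD(V)$ is injective, and by Corollary~\ref{reduced} each $\cD(V)$ is reduced, so $\cD(V)$ likewise has a homology class in $\bG$ independent of $V$, and in particular a degree $d(\cD)$ in $\bP^9$ independent of $V$. Since Fano \cite{Fano1} determined the numerical invariants of the surface of double lines for a general cubic threefold, giving in particular $d(\cD)=90$, we conclude $d(\cD)=90$ for all smooth $V$.

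For the bound on $e$, note that $S_{\cD}\subset V$ and that a general point $p$ of $S_{\cD}$ is not an Eckardt point, the Eckardt points being finite in number while $S_{\cD}$ is a surface. As recalled above, for such a $p$ the scheme $W_p$ of lines of $V$ through $p$ has length $6$; hence at most $6$ lines of $V$, and a fortiori at most $6$ double lines, pass through $p$. Thus $e\le 6$, and $\deg S_{\cD}=d(\cD)/e=90/e\ge 15$, which is the assertion.

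The delicate point is precisely the constancy of $d(\cD)$: on special threefolds the curve of double lines could a priori degenerate or acquire several components, so one genuinely needs Corollary~\ref{reduced}, together with the flatness of the family of flag curves $K(V)$, both to transport Fano's generic value of $d(\cD)$ to every smooth $V$ and to ensure that the cycle-theoretic degree of $\cD$ agrees with its scheme-theoretic degree. Granting this, the two cited intersection lemmas, and the length-$6$ property of $W_p$, the estimate follows at once.
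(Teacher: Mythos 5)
Your overall frame coincides with the paper's: both proofs bound the degree $e$ of the ruling map $\pi_1^{-1}(\cD)\to S_{\cD}$ by $6$, using the length-$6$ scheme $W_p$ of lines of $V$ through a general (non-Eckardt) point, and then divide the Pl\"ucker degree $d(\cD)$ by $e$. Where you genuinely differ is in how $d(\cD)=90$ is obtained. You transport Fano's generic value to every smooth $V$ by invoking the constancy of the homology class of $\cD$ in $\bG$ (the $K(V)$ fibration argument) together with Corollary~\ref{reduced}. The paper instead computes the number intrinsically on $F$, for each smooth $V$ separately: since the Pl\"ucker embedding of $F$ is its canonical embedding, $d(\cD)=\cD\cdot K_F$, and by Clemens--Griffiths (10.20) one has $\cD\in\vert 2K_F\vert$, whence $d(\cD)=2K_F^2=2\cdot 45=90$. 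The paper's route is shorter, needs no specialization from the general cubic, and uses Corollary~\ref{reduced} only to guarantee that this divisor-class degree is the degree of the reduced curve doing the ruling; the constancy of the class plays no role in the degree computation. Your route can be made to work, but it has one caveat you should address: the paper attributes to Fano only that the surface $S_{\cD}\subset\bP^4$ has degree $90$ in general, which is not literally the Pl\"ucker degree $d(\cD)$ of the curve $\cD\subset\bP^9$; the two coincide exactly when, for general $V$, a general point of $S_{\cD}$ lies on a unique double line (generic $e=1$), which you do not verify. Either justify that (or cite Fano for the order of the curve of double lines itself), or simply replace this input by the identity $d(\cD)=\cD\cdot K_F=2K_F^2$, which is what the paper does and which makes the value $90$ available uniformly for all smooth $V$ without any degeneration argument.
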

\begin{proof} Since the degree of  $
 \pi_1^{-1}(S_{\cD} )\to  V$ is at most   $6$,   it is then enough to compute that the  degree of $\cD$ is $90$. By recalling that  
the tautological embedding $F\subset \bG\subset \bP^9$
is the canonical map of $F$, we get $deg(\cD)=\cD\cdot K_F$. On the other hand $\cD \in \vert 2 \,K_F \,\vert $ (cf. (10.20) in \cite{CG}), and then 
$deg(\cD)=2K_F \cdot K_F = 2\cdot K_F^2=2 \cdot 45=90$.  
\end{proof}

\begin{thm}\label{torelli}
 There is only one cubic threefold containing $S_{\cD}$. In particular the infinitesimal invariant determines the cubic threefold.
\end{thm}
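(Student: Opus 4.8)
The plan is to reduce everything to a statement about the degree of the ruled surface $S_{\cD}$ and a general position argument. First I would invoke the standard fact that a surface $S\subset \bP^4$ of sufficiently large degree is contained in at most one cubic hypersurface: the space of cubic forms on $\bP^4$ has dimension $\binom{7}{3}=35$, and a surface of degree $d$ imposes at least some predictable number of conditions on cubics containing it. More precisely, the restriction map $H^0(\bP^4,\cO(3))\to H^0(S,\cO_S(3))$ has kernel of dimension at most $35-h^0(S,\cO_S(3))$, and for a ruled surface of degree $d$ swept out by lines one has $h^0(S,\cO_S(3))\ge 3d+1-(\text{something linear in the arithmetic genus})$ coming from the lines themselves; in any case, once $d>9$ one classically knows (e.g.\ by restricting to a general line $l\subset S$, on which a cubic vanishing on $S$ must vanish identically once $d\geq 10>3$, so the surface is cut out in low degree) that a surface which is not a cone and has degree $\ge 10$ lies on at most one cubic threefold. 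Since Lemma \ref{degree90} gives $\deg S_{\cD}\ge 15>9$, this applies, provided $S_{\cD}$ is irreducible and not degenerate.

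Thus the real content to verify is: (i) $S_{\cD}$ is irreducible (or at least spans $\bP^4$ and is not contained in a smaller-degree surface in a way that would allow a pencil of cubics), and (ii) the degree bound from Lemma \ref{degree90} is genuinely available, i.e.\ that $S_{\cD}$ is not a cone or otherwise degenerate. For (i), the curve $\cD\in|2K_F|$ on the Fano surface $F$, which is of general type with ample canonical class; hence $\cD$ is connected, and by Corollary \ref{reduced} every component is reduced, so irreducibility follows once one checks that $|2K_F|$ restricted to the relevant locus is non-split — this is already implicit in the Clemens--Griffiths/Fano analysis of $\cD$. For (ii), one notes that $S_{\cD}\subset V$ and $V$ is smooth, so $S_{\cD}$ cannot be a cone (a cone inside a smooth cubic threefold would force the vertex to be a highly singular point of $V$); moreover $S_{\cD}$ spans $\bP^4$ since the lines of second type are not all contained in a hyperplane (a hyperplane section of $V$ is a cubic surface, whose lines of second type are finite in number).

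Here is the sequence of steps I would carry out. \textbf{Step 1:} Record the linear-algebra estimate that a cubic form vanishing on a nondegenerate, non-conical surface $S\subset\bP^4$ of degree $d\ge 10$ is unique up to scalar — the cleanest route is: restrict to a general line $l\subset S$; since $\deg S > 3$ and the lines of $S$ move in at least a one-parameter family covering $S$, a cubic $G$ with $S\subset\{G=0\}$ restricts to $0$ on a one-dimensional family of lines, hence $\{G=0\}\supseteq S$ forces $\{G=0\}$ to be a cubic threefold containing the Fano ruled surface; two such cubics $G_1,G_2$ would give $\lambda G_1+\mu G_2$ vanishing on $S$ for all $[\lambda:\mu]$, producing a pencil of cubic threefolds all singular along nothing but sharing a degree-$\ge 10$ ruled surface, and a dimension count on $H^0(\bP^4,\cO(3))=35$ against the conditions imposed by $15$ lines in general position (each line imposes $4$ independent conditions on cubics, so $\ge 4\cdot(\text{a few lines})>35$) rules this out. \textbf{Step 2:} Verify $S_{\cD}$ is irreducible and nondegenerate using $\cD\in|2K_F|$, connectedness of $F$, and Corollary \ref{reduced}. \textbf{Step 3:} Apply Lemma \ref{degree90} to get $\deg S_{\cD}\ge 15>9$ and conclude there is a unique cubic threefold containing $S_{\cD}$; combined with Theorem \ref{dddouble_lines} and Proposition \ref{corR} (which show $\nu$, via $\delta_\nu$, recovers $\cD$ hence $S_{\cD}$), this proves the infinitesimal invariant determines $V$. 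The main obstacle I anticipate is Step 1 made fully rigorous: pinning down the exact general-position hypotheses on the lines of $S_{\cD}$ so that the count "$k$ lines impose $4k$ conditions on cubics in $\bP^4$" is valid — one must ensure enough of the double lines are in sufficiently general position, which ultimately again rests on the Fano-surface geometry of $\cD$ and the fact that $S_{\cD}$, being a degree $\ge 15$ ruled surface, cannot have all its lines concentrated in a special linear configuration.
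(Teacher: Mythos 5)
Your reduction of the theorem to the statement ``$S_{\cD}$ has large degree'' (via Corollary \ref{reduced}, Lemma \ref{degree90}, and the fact that $\delta_\nu$ recovers $\cD$) matches the paper's plan, but your Step 1 --- the uniqueness of the cubic containing a surface of degree $\ge 10$ --- is where the argument breaks down, and it is the only step with real content. Your proposed proof of it rests on a general-position count: ``each line imposes $4$ independent conditions on cubics, so $\ge 4\cdot(\text{a few lines})>35$ rules this out.'' This count cannot be valid here, because the double lines are anything but general: they all lie on the smooth cubic $V$ itself, so a cubic form (namely the equation of $V$) does vanish on all of them, and no choice of ``sufficiently general'' double lines can impose independent conditions in the way you need (your count, taken literally, would show that \emph{no} cubic contains nine of these lines, contradicting the existence of $V$). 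The auxiliary claims in Step 1 are also off: a cubic vanishing on $S$ automatically vanishes on every line of $S$, so restricting to a general line gives no information, and the classical fact you want is not about cones or nondegeneracy at all.

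The correct argument, which is what the paper's reduction ``degree larger than $9$'' is implicitly invoking, is simply Bezout: if $V'\neq V$ is a cubic threefold containing $S_{\cD}$, then since $V$ is smooth and irreducible and $V\not\subset V'$, the intersection $V\cap V'$ is a proper $2$-dimensional cycle of degree $3\cdot 3=9$; it contains the surface $S_{\cD}$, whose degree is at least $15$ by Lemma \ref{degree90} (which in turn uses the reducedness of $\cD$ from Corollary \ref{reduced} to get $\deg\cD=\cD\cdot K_F=2K_F^2=90$, divided by the degree $\le 6$ of the covering of $S_{\cD}$ by the incidence correspondence). This is a contradiction, and no irreducibility, nondegeneracy, or non-cone hypotheses on $S_{\cD}$ are needed --- so your Steps 2 and the general-position worries you flag at the end are not merely hard to complete, they are unnecessary once the uniqueness step is done by intersection theory rather than by counting conditions.
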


\end{document}